\documentclass[11pt]{amsart}

\usepackage[dvipsnames]{xcolor}
\usepackage{amssymb,verbatim}
\usepackage{amsmath,amsfonts,enumitem,bm}
\usepackage[mathscr]{euscript} %
\usepackage{amsthm}
\usepackage{stmaryrd}
\usepackage{url}
\usepackage{graphicx} %
\usepackage{enumitem} %
\usepackage{hyperref} %
\hypersetup{colorlinks} %
\usepackage{bbm} %
\usepackage{bm} %
\usepackage{mathrsfs} %
\usepackage{cancel} %
\usepackage{ytableau} %
\usepackage{mathtools}
\usepackage{faktor} %
\usepackage{algorithm}
\usepackage{algorithmic}
\usepackage{bm}
\usepackage{subfigure}
\usepackage{graphicx}

\usepackage[colorinlistoftodos]{todonotes}

\RequirePackage{cleveref}
\usepackage{hypcap}
\hypersetup{colorlinks=true, citecolor=darkblue, linkcolor=darkblue}
\definecolor{darkblue}{rgb}{0.0,0,0.7}
\newcommand{\darkblue}{\color{darkblue}}

\definecolor{darkred}{rgb}{0.68,0,0}
\newcommand{\darkred}{\color{darkred}}

\definecolor{darkgreen}{rgb}{0,.38,0}
\newcommand{\darkgreen}{\color{darkgreen}}

\newcommand{\defn}[1]{\emph{\darkblue #1}}
\newcommand{\defna}[1]{\emph{\darkred #1}}

\newcommand{\defng}[1]{\emph{\darkgreen #1}}

\setlist[enumerate]{
	label=\textnormal{({\roman*})},
	ref={\roman*}}

\makeatletter
\def\th@plain{%
	\thm@notefont{}%
	\itshape %
}
\def\th@definition{%
	\thm@notefont{}%
	\normalfont %
}
\makeatother

\newtheorem{thm}{Theorem}[section]
\newtheorem*{thm*}{Theorem}
\newtheorem{lemma}[thm]{Lemma}

\newtheorem*{claim*}{Claim}

\newtheorem{prop}[thm]{Proposition}
\newtheorem{conj}[thm]{Conjecture}
\newtheorem{conv}[thm]{Convention}

\theoremstyle{definition}
\newtheorem{ex}[thm]{Example}
\newtheorem{rem}[thm]{Remark}

\numberwithin{figure}{section}
\numberwithin{equation}{section}

\def\wh{\widehat}

\def\emp{\nothing}

\def\zz{\mathbb Z}
\def\nn{\mathbb N}
\def\cc{\mathbb C}

\def\rr{\mathbb R}

\def\kk{\mathbb K}

\def\bi{{\bm{i}}}

\def\la{\lambda}
\def\ga{\gamma}
\def\si{\sigma}

\def\al{\alpha}
\def\be{\beta}
\def\om{\omega}

\def\vk{\varkappa}

\def\CS{\Phi}

\def\cL{\mathcal L}

\def\cw{\mathcal \om}

\def\ssu{\subset}

\def\wt{\widetilde}
\def\<{\langle}
\def\>{\rangle}

\def\Bot{\text{{\rm bot}}}
\def\Im{\text{{\rm Im}}}

\def\y{ {\text {\rm y}  } }

\def\Z{\zz}

\def\GL{ {\text {\rm GL} } }

\def\0{{\mathbf 0}}
\def\id{{\rm id}}

\def\nothing{\varnothing}

\def\.{\hskip.06cm}
\def\ts{\hskip.03cm}

\def\dist{{\text {\rm dist} }}

\def\bx{{\textbf{\textit{x}}}}

\def\by{{\textbf{\textit{y}}}}

\def\bal{{\boldsymbol{\alpha}}}

\newcommand{\sign}{\mathrm{sign}}

\newcommand{\lcm}{\mathrm{lcm}}

\newcommand{\Range}{\mathrm{Range}}
\newcommand{\Exc}{\mathrm{Exc}}

\def\.{\hskip.06cm}
\def\ts{\hskip.03cm}

\def\nin{\noindent}

\def\Nu{\wh u}
\def\Nv{\wh v}
\def\Nw{\wh w}

\newcommand{\textsu}[1]{\textup{\textsf{#1}}}
\usepackage{indentfirst}
\DeclareTextSymbolDefault{\ae}{T1}

\newcommand{\ComCla}[1]{\textup{\textsu{#1}}}

\newcommand{\sharpP}{\ComCla{\#P}}
\newcommand{\SP}{\ComCla{\#P}}

\newcommand{\GapP}{\ComCla{GapP}}

\newcommand{\BPP}{\ComCla{BPP}}

\def\SP{\sharpP}

\def\revdom{\text{\rm revdom}}

\def\xx{\textbf{\textit{x}}}

\newcommand{\inv}{\operatorname{{\ell}}}
\newcommand{\Des}{\operatorname{{\rm Des}}}

\newcommand{\Sch}{\mathfrak{S}} %

\newcommand{\Sc}{\Sch}

\newcommand{\code}{{\sf{code}}}
\newcommand\wtx{{\tt wt}}
\newcommand\pd{{\tt PD}}

\newcommand{\satA}[1]{*_{#1}}

\makeatletter
\newcommand{\gettikzxy}[3]{%
  \tikz@scan@one@point\pgfutil@firstofone#1\relax
\pgfmathsetmacro{#2}{\the\pgf@x/\linkpatternunit}
\pgfmathsetmacro{#3}{\the\pgf@y/\linkpatternunit}
}
\tikzset{label anchor/.code={%
    \let\tikz@auto@anchor=\pgfutil@empty
    \def\tikz@anchor{#1}
  },
  label anchor/.default=center
}
\makeatother
\tikzset{arrow/.style={postaction={decorate,thick,decoration={markings,mark = at position #1 with {\arrow{>}}}}},arrow/.default=0.5}
\tikzset{invarrow/.style={postaction={decorate,thick,decoration={markings,mark = at position #1 with {\arrow{<}}}}},invarrow/.default=0.5}
\newdimen\linkpatternunit%
\newcount\linkpatternsize%
\newcount\lpsize%
\newif\iflinkpatterninverted%
\newif\iflinkpatterntikzstarted%
\newif\iflinkpatternboxed%
\newif\iflinkpatternaxis%
\newif\iflinkpatternstraightlines%
\newif\iflinkpatternnumbered%
\newif\iflinkpatternalias%
\newif\iflinkpatternnode%
\newif\iflinkpatterncentered%
\newcount\linkpatternfused%
\pgfkeys{/linkpattern/.cd,centered/.is if=linkpatterncentered,inverted/.is if=linkpatterninverted,numbered/.is if=linkpatternnumbered,tikzstarted/.is if=linkpatterntikzstarted,straight lines/.is if=linkpatternstraightlines,boxed/.is if=linkpatternboxed,axis/.is if=linkpatternaxis,vertexcolor/.store in=\linkpatternvertexcolor,
edgecolor/.store in=\linkpatternedgecolor,
boxcolor/.code={\linkpatternboxedtrue\def\linkpatternboxcolor{#1}},tikzoptions/.style={every linkpattern/.append style={#1}},
size/.code={\linkpatternsize=#1},
numbering0/.code={\def\lpnumbering{#1}\def\linkpatternnumbering{#1}},
numbering/.style={numbered,numbering0={#1}},
unit/.code={\linkpatternunit=#1},
height/.store in=\linkpatternheight,
shape/.store in=\linkpatternshape,
looseness/.store in=\linkpatternlooseness,
squareness/.store in=\linkpatternsquareness,
extra space/.store in=\linkpatternextraspace,
width/.store in=\linkpatternwidth,
alias/.is if=linkpatternalias,
pos/.store in=\linkpatternpos,
labeloptions/.style={labeloptionslist/.append style={#1}},
labeloptionslist/.style={inner sep=2pt,font=\scriptsize,
execute at begin node=$,execute at end node=$,
label anchor={#1+180}},
nodeon/.is if=linkpatternnode,
node/.style={nodeon,nodeoptionslist/.append style={#1}},
nodeoptionslist/.style={},
pipedream/.style={shape=pipedream,looseness=0,straight lines,numbering0=tangle},
tangle/.style={shape=tangle,numbering0=tangle},
every linkpattern/.style={x=\linkpatternunit,y=\linkpatternunit},
fused/.code={\linkpatternfused=#1},%
vertex/.style={circle,thin,solid,draw=black,fill=\linkpatternvertexcolor,inner sep=1.5pt,draw opacity=1,transform shape},%
edge/.style={very thick,solid,draw=\linkpatternedgecolor,draw opacity=1}}
\linkpatterncenteredfalse%
\linkpatterninvertedfalse%
\linkpatternnumberedfalse%
\linkpatterntikzstartedfalse%
\linkpatternboxedfalse%
\linkpatternaxistrue%
\linkpatternaliastrue%
\linkpatternstraightlinesfalse%
\def\linkpatternlooseness{0.2}%
\def\linkpatternsquareness{0.35}%
\def\linkpatternvertexcolor{red}%
\def\linkpatternedgecolor{black}%
\def\linkpatternboxcolor{none}%
\def\linkpatternheight{0}
\def\linkpatternwidth{0}
\def\linkpatternshape{default}
\def\linkpatternnumbering{default}
\def\linkpatternpos{(0,0)}
\def\linkpatternextraspace{0}
\linkpatternnodefalse
\def\firstchar#1#2\empty{#1}%
\def\linkpatterndo#1#2{
\edef\param{\csname linkpattern#2\endcsname}
\edef\firstcharparam{\expandafter\firstchar\param\empty}
\expandafter\ifcat\firstcharparam a
\expandafter\ifx\csname linkpattern#1\param\endcsname\relax
\csname linkpattern#1unknown\endcsname
\else
\csname linkpattern#1\csname linkpattern#2\endcsname\endcsname
\fi
\else
\csname linkpattern#1unknown\endcsname
\fi
}%
\def\linkpatterncoordtangle{\ifnum\x>\lphalfsize\pgfmathparse{\lpsize+1-\x}\xdef\lpcoordx{\pgfmathresult}\xdef\lpcoordy{\lpheight}\xdef\lpangle{270}\else\xdef\lpcoordx{\x}\xdef\lpcoordy{-\lpheight}\xdef\lpangle{90}\fi}
\def\linkpatterncoordpipedream{\ifnum\x>\lphalfsize\pgfmathparse{\lpsize+1-\x-0.5}\xdef\lpcoordx{\pgfmathresult}\xdef\lpcoordy{0}\xdef\lpangle{270}\else\pgfmathparse{0.5-\x}\xdef\lpcoordy{\pgfmathresult}\xdef\lpcoordx{0}\xdef\lpangle{0}\fi}
\def\linkpatterncoordrectangle{
\ifnum\x>\lptqsize
\pgfmathparse{\lpsize+1-\x-0.5}\xdef\lpcoordx{\pgfmathresult}\xdef\lpcoordy{0}\xdef\lpangle{270}
\else\ifnum\x>\lphalfsize
\pgfmathparse{\x-\lptqsize-0.5}\xdef\lpcoordy{\pgfmathresult}\xdef\lpcoordx{\linkpatternwidth}\xdef\lpangle{180}
\else\ifnum\x>\linkpatternheight
\pgfmathparse{\x-\linkpatternheight-0.5}\xdef\lpcoordx{\pgfmathresult}\xdef\lpcoordy{-\linkpatternheight}\xdef\lpangle{90}
\else
\pgfmathparse{0.5-\x}\xdef\lpcoordy{\pgfmathresult}\xdef\lpcoordx{0}\xdef\lpangle{0}
\fi\fi\fi
}%
\def\linkpatternsetsizeunknown{
\global\lpsize=\linkpatternsize%
\if\linkpatternheight0
\xdef\maxsep{0}
\foreach \x/\xx in \mylist%
{%
\edef\tempx{\withoutprime{\x}}
\edef\tempxx{\withoutprime{\xx}}
\pgfmathparse{max(\maxsep,abs(\tempx-\tempxx))}
\xdef\maxsep{\pgfmathresult}
}%
\pgfmathparse{0.25+0.8*\linkpatternsquareness*\maxsep}
\xdef\lpheight{\pgfmathresult}
\else
\xdef\lpheight{\linkpatternheight}
\fi
}
\newcount\tempsize
\def\linkpatternrightmostunknown{
\global\lpsize=0
\global\tempsize=0
\foreach\x/\labx in \linkpatternnumbering
{
\edef\tempx{\withoutprime{\x}}%
\ifnum\lpsize<\tempx\global\lpsize=\tempx\fi
\global\advance\tempsize by 1
}
\ifnum\tempsize>\lpsize\global\lpsize=\tempsize\fi
}%
\def\linkpatternrightmostdefault{
\global\lpsize=0
\global\tempsize=0
\foreach \x/\y in \mylist
{
\edef\tempx{\withoutprime{\x}}%
\ifnum\lpsize<\tempx\global\lpsize=\tempx\fi
\ifx\x\y
\global\advance\tempsize by 1
\else
\edef\tempy{\withoutprime{\y}}
\ifnum\lpsize<\tempy\global\lpsize=\tempy\fi%
\global\advance\tempsize by 2
\fi
}
\ifnum\tempsize>\lpsize\global\lpsize=\tempsize\fi
}%
\def\linkpatternrightmosttangle{
\global\lpsize=0
\global\tempsize=0
\foreach \x/\y in \mylist
{
\edef\tempx{\withoutprime{\x}}
\ifnum\lpsize<\tempx\global\lpsize=\tempx\fi
\ifx\x\y
\global\advance\tempsize by 1
\else
\edef\tempy{\withoutprime{\y}}
\ifnum\lpsize<\tempy\global\lpsize=\tempy\fi%
\global\advance\tempsize by 2
\fi
}
\global\advance\lpsize by\lpsize%
\ifnum\tempsize>\lpsize\global\lpsize=\tempsize\fi
}%

\newcommand\linkpattern[2][]{%
{%
\pgfkeys{/linkpattern/.cd,#1}%
\edef\mylist{#2}%
\def\primetest##1'{}%
\def\hasaprime##1{\expandafter\primetest##1''}%
\def\internalwithoutprime##1'{##1}%
\def\withoutprime##1{\if\hasaprime##1 %
\expandafter\internalwithoutprime##1\else ##1\fi}%
\iflinkpatternnumbered%
\iflinkpatterninverted%
\tikzset{/linkpattern/lbl/.style n args={3}{label={[/linkpattern/labeloptionslist=-##1,##3] ##1:##2}}}%
\else%
\tikzset{/linkpattern/lbl/.style n args={3}{label={[/linkpattern/labeloptionslist=##1,##3] ##1:##2}}}%
\fi%
\else%
\tikzset{/linkpattern/lbl/.style={}}%
\fi%
\tikzifinpicture{\linkpatterntikzstartedtrue%
\begin{scope}[shift=\linkpatternpos,/linkpattern/every linkpattern]%
}{%
\linkpatterntikzstartedfalse%
\iflinkpatterncentered
\begin{tikzpicture}[baseline=(current  bounding  box.center),/linkpattern/every linkpattern]%
\else
\begin{tikzpicture}[baseline=0,/linkpattern/every linkpattern]%
\fi
}%
\begin{scope}[local bounding box=link pattern box]
\iflinkpatterninverted%
\begin{scope}[yscale=-1]%
\fi%
\linkpatterndo{setsize}{shape}%
\ifnum\lpsize=0
\linkpatterndo{rightmost}{numbering}
\fi
\pgfmathtruncatemacro{\lphalfsize}{\lpsize/2}%
\linkpatterndo{numbering}{numbering}
\iflinkpatternboxed
\linkpatterndo{drawbox}{shape}
\else
\iflinkpatternaxis
\linkpatterndo{drawaxis}{shape}
\fi
\fi
\foreach\xx/\xlab/\opt in \lpnumbering
{
\ifx\xlab\opt\def\opt{}\fi%
\if\hasaprime\xx %
\pgfmathtruncatemacro{\xx}{\lpsize+1-\withoutprime{\xx}}
\fi
\ifnum\linkpatternfused>1
\pgfmathsetmacro{\x}{0.4*(0.5+\linkpatternfused*(0.5+floor((\xx-1)/\linkpatternfused)))+0.6*\xx}
\else
\def\x{\xx}
\fi
\linkpatterndo{coord}{shape}
\iflinkpatternalias\def\xlabb{\xlab}\else\def\xlabb{\xx}\fi
\path (\lpcoordx,\lpcoordy) coordinate[/linkpattern/vertex,/linkpattern/lbl={\lpangle+180}{\xlab}{\opt},alias=v\xlabb] (v\xx) ++(\lpangle:\linkpatternunit) coordinate[alias=vv\xlabb] (vv\xx);
}
\foreach \a/\b/\c in \mylist
{
\if\hasaprime\a %
\pgfmathtruncatemacro{\a}{\lpsize+1-\withoutprime{\a}}
\fi
\ifx\b\c\def\c{}\fi%
\draw[/linkpattern/edge]
\ifx\a\b
(v\a)
\c
--
++(0,\lpheight);
\else
\pgfextra{
\if\hasaprime\b %
\pgfmathtruncatemacro{\b}{\lpsize+1-\withoutprime{\b}}
\fi
\gettikzxy{(v\a)}{\ax}{\ay}
\gettikzxy{(v\b)}{\bx}{\by}
\gettikzxy{(vv\a)}{\axx}{\ayy}
\gettikzxy{(vv\b)}{\bxx}{\byy}
\pgfmathsetmacro{\dist}{sqrt((\ax-\bx)*(\ax-\bx)+(\ay-\by)*(\ay-\by))}
\pgfmathsetmacro{\abx}{(\axx-\ax)*\dist*\linkpatternsquareness+(\bx-\ax)*\linkpatternlooseness)}
\pgfmathsetmacro{\aby}{(\ayy-\ay)*\dist*\linkpatternsquareness+(\by-\ay)*\linkpatternlooseness)}
\pgfmathsetmacro{\bax}{(\bxx-\bx)*\dist*\linkpatternsquareness+(\ax-\bx)*\linkpatternlooseness)}
\pgfmathsetmacro{\bay}{(\byy-\by)*\dist*\linkpatternsquareness+(\ay-\by)*\linkpatternlooseness)}
}
(v\a)
\c%
\iflinkpatternstraightlines
\pgfextra{
\pgfmathsetmacro{\t}{((\ax-\bx)*\bay-(\ay-\by)*\bax)/(\aby*\bax-\abx*\bay)}
\pgfmathsetmacro{\abx}{\t*\abx}
\pgfmathsetmacro{\aby}{\t*\aby}
}
[rounded corners=0.2\linkpatternunit] -- ++(\abx,\aby) -- (v\b);
\else
.. controls ++(\abx,\aby) and ++(\bax,\bay) ..
\fi
(v\b);
\fi
}
\end{scope}
\iflinkpatternnode
\node[fit=(link pattern box),/linkpattern/nodeoptionslist] {};
\fi
\iflinkpatterninverted
\end{scope}
\fi
\iflinkpatterntikzstarted
\end{scope}
\else%
\end{tikzpicture}%
\fi%
}}%
\newcommand\tanglelinkpattern[3][]{%
{%
\pgfkeys{/linkpattern/.cd,#1}
\iflinkpatterninverted
\begin{tikzpicture}[/linkpattern/every linkpattern,baseline=\linkpatternunit]%
\else
\begin{tikzpicture}[/linkpattern/every linkpattern,baseline=-\linkpatternunit]%
\fi
\linkpattern[#1,tikzstarted,numbered=false]{#3}
\pgfmathtruncatemacro{\lptempsize}{2*\linkpatternsize}
\iflinkpatterninverted
\begin{scope}[yshift=0.5*\linkpatternunit]
\else
\begin{scope}[yshift=-0.5*\linkpatternunit]
\fi
\linkpattern[tangle,#1,tikzstarted,size=\lptempsize,
numbering=halftangle,
height=0.5]{#2}
\end{scope}
\end{tikzpicture}%
}}
\newcommand\diag[4][]{%
\pgfkeys{/linkpattern/.cd,#1}%
\iflinkpatterntikzstarted\else%
\begin{tikzpicture}[scale=0.5]%
\fi%
\iflinkpatterninverted%
\begin{scope}[yscale=-1]%
\fi%
\draw (0,0) grid (#2,#3);
\edef\mylist{#4}%
\foreach\y/\x/\z in \mylist
{
\ifx\x\z
\draw[decorate,decoration={zigzag,
amplitude=1pt,segment length=5pt}]
(\x-0.5,#3) -- (\x-0.5,\y-0.5) node[circle,fill=black,inner sep=2pt] {} -- (#2,\y-0.5);
\else
\node at (\x-0.5,\y-0.5) {$\z$};
\fi
}
\iflinkpatterninverted
\end{scope}
\fi
\iflinkpatterntikzstarted\else%
\end{tikzpicture}%
\fi%
}
\makeatletter
\tikzset{circle split part fill/.style  args={#1,#2}{%
 alias=tmp@name,
  postaction={%
    insert path={
     \pgfextra{%
     \pgfpointdiff{\pgfpointanchor{\pgf@node@name}{center}}%
                  {\pgfpointanchor{\pgf@node@name}{east}}%
     \pgfmathsetmacro\insiderad{\pgf@x}
      \fill[#1] (\pgf@node@name.base) ([xshift=-\pgflinewidth]\pgf@node@name.east) arc
                          (0:180:\insiderad-\pgflinewidth)--cycle;
      \fill[#2] (\pgf@node@name.base) ([xshift=\pgflinewidth]\pgf@node@name.west)  arc
                           (180:360:\insiderad-\pgflinewidth)--cycle;                    }}}}}
 \makeatother
\tikzset{bdot/.style={circle,circle split,draw,circle split part fill={black,white},thin,inner sep=1pt}}%
\tikzset{wdot/.style={circle,circle split,draw,circle split part fill={white,black},thin,inner sep=1pt}}%
\newdimen{\loopcellsize}
\tikzset{bgplaq/.style={draw=black,fill=\linkpatternboxcolor}}
\def\plaqwest{}
\def\plaqeast{}
\def\plaqnorth{}
\def\plaqsouth{}
\pgfkeys{/linkpattern/.cd,west/.store in=\plaqwest,east/.store in=\plaqeast,north/.store in=\plaqnorth,south/.store in=\plaqsouth}
\def\plaqname{plaq}%

\newcommand\plaq[2][]{
\node[bgplaq,rectangle,draw,minimum size=\loopcellsize,transform shape] (\plaqname) {};
\useasboundingbox;
\pgfkeys{/linkpattern/.cd,#1}%
\ifx#2\empty\else
\begin{scope}[x=\loopcellsize,y=\loopcellsize]
\csname plaq#2\endcsname
\end{scope}\fi
}
\newcommand\nbplaq[2][]{%
\node[bgplaq,rectangle,draw=none,fill=none,minimum size=\loopcellsize,transform shape] (\plaqname) {};
\useasboundingbox;
\pgfkeys{/linkpattern/.cd,#1}%
\ifx#2\empty\else
\begin{scope}[x=\loopcellsize,y=\loopcellsize]
\csname plaq#2\endcsname
\end{scope}\fi
}
\newcommand\ecplaq[3][]{%
\node[bgplaq,rectangle,draw,minimum size=\loopcellsize,transform shape] (\plaqname) {};
\useasboundingbox;
\pgfkeys{/linkpattern/.cd,#1}%
\begin{scope}[x=\loopcellsize,y=\loopcellsize]
\def\linkpatternedgecolor{#3}
\csname plaq#2\endcsname
\end{scope}
}
\newcommand\wecplaq[3][]{%
\node[bgplaq,rectangle,draw,fill=none,minimum size=\loopcellsize,transform shape] (\plaqname) {};
\useasboundingbox;
\pgfkeys{/linkpattern/.cd,#1}%
\begin{scope}[x=\loopcellsize,y=\loopcellsize]
\def\linkpatternedgecolor{#3}
\csname plaq#2\endcsname
\end{scope}
}
\newcommand\ecbcplaq[4][]{%
\node[bgplaq,rectangle,draw,fill=#4,minimum size=\loopcellsize,transform shape] (\plaqname) {};
\useasboundingbox;
\pgfkeys{/linkpattern/.cd,#1}%
\begin{scope}[x=\loopcellsize,y=\loopcellsize]
\def\linkpatternedgecolor{#3}
\csname plaq#2\endcsname
\end{scope}
}

\newcommand\wplaq[2][]{
\node[bgplaq,rectangle,draw,fill=none,minimum size=\loopcellsize,transform shape] (\plaqname) {};
\useasboundingbox;
\pgfkeys{/linkpattern/.cd,#1}%
\ifx#2\empty\else
\begin{scope}[x=\loopcellsize,y=\loopcellsize]
\csname plaq#2\endcsname
\end{scope}\fi
}

\tikzset{loop/.code={\def\plaqname{loop-\the\pgfmatrixcurrentrow-\the\pgfmatrixcurrentcolumn}},loop/.append style={matrix,row sep={\loopcellsize,between origins},column sep={\loopcellsize,between origins}}}%

\begin{document}

\title[Stretched Schubert coefficients are eventually quasi-polynomial]{Stretched
Schubert coefficients are \\ eventually quasi-polynomial}

\author[Igor Pak \. \and \. Zachary Slonim]{Igor Pak$^\star$  \. \and \.  Zachary Slonim$^\star$}

\thanks{\today}

\thanks{\thinspace ${\hspace{-.45ex}}^\star$Department of Mathematics,
UCLA, Los Angeles, CA 90095, USA. Email:  \texttt{\{pak,zachslonim\}@math.ucla.edu}}

\begin{abstract}
For a permutation \ts $u\in S_n$, let \ts $N\ast u\in S_{Nn}$ \ts be the permutation
with scaled Lehmer code.  For given \ts $u,v,w\in S_n$ \ts and integer \ts $N$, the
\emph{stretched Schubert coefficients}  \ts are defined as \ts $f_{u,v,w}(N):=c_{N*u,N*v}^{N*w}\ts$.
Our main result is that the function \ts $f_{u,v,w}(N)$ \ts is eventually quasi-polynomial.
This proves Kirillov's conjecture \cite[Conj.~6.28]{Kir04},
that the generating function for the sequence \ts $\{f_{u,v,w}(N)\}$ \ts is rational.

For the proof, we use combinatorics of pipe dreams to show that Schubert coefficients
are given as an alternating sum of the numbers of integer points in certain polytopes.
These polytopes behave nicely under stretching, and we use Ehrhart theory to obtain
the result.  As a consequence of the proof, we also present new counterexamples
to the saturation conjecture for Schubert coefficients, and give computational applications.
\end{abstract}

\maketitle

\section{Introduction}\label{s:intro}

\subsection{Foreword} \label{ss:intro-fore}
In algebraic combinatorics, the idea of \defna{stretching} %
has led to a series of profound insights into various
structure constants.  Notably, \defng{stretched Kostka numbers} \ts
$K_{N\la,N\mu}$ \ts and \defng{stretched Littlewood--Richardson $($LR$)$ coefficients}
 \ts $c^{N\la}_{N\mu,N\nu}$ \ts are polynomial in~$N$ \cite{DW02,Ras04}.
For general root systems, stretched tensor multiplicities are
\defn{quasi-polynomial} (polynomial on residue classes modulo a fixed integer),
see \cite{DM06}.

There are two profound insights giving motivation for the stretching.
First, in various contexts, Gelfand and coauthors suggested that
these constants count the number of integer points in polytopes given
by inequalities linear in the weights $\la,\mu,\nu$ \cite{GT50,GZ85}.
In this setting, stretched constants coincide with
\emph{Ehrhart $($quasi-$)$ polynomials}, see e.g.\ \cite{Bar08,BR07} and
a discussion in~$\S$\ref{ss:finrem-unim}.  This connection to Ehrhart theory has led to
a series of remarkable applications in algebraic combinatorics, see
e.g.\ \cite{BBCV06,Stu95}.

The second insight is the \defn{saturation property} \ts for  LR
coefficients, which
states that for all partitions \ts $\la,\mu,\nu$ \ts
with \ts $|\la|=|\mu|+|\nu|$, we have:
\begin{equation}\label{eq:saturation}
c^{\la}_{\mu,\nu} \. > \. 0 \quad \Longleftrightarrow \quad c^{N\la}_{N\mu, N \nu}
\. > \. 0 \ \ \text{\em for any \ $N\ge 1 \ts$}.
\end{equation}
Here \ts $N\. (\la_1,\la_2,\ldots):=(N\la_1,N\la_2,\ldots)$ \ts is the (usual)
stretching of partitions.
The saturation property \eqref{eq:saturation} was proved by Knutson and Tao \cite{KT99}.
It gives a complete resolution of the \emph{Horn problem} on spectra of Hermitian matrices,
see e.g.\ \cite{Buch00,Ful00,Kum14} for overviews of different aspects
of this remarkable story. %
This led to numerous developments in
combinatorics, representation theory and computational complexity,
see an overview in \cite{PR26}.

Motivated by these results, in 2004, Kirillov made a long series of conjectures
for various extensions of LR coefficients.  Notably, in \cite[Conj.~6.28]{Kir04}, he
asked  whether the generating function for the stretched Schubert coefficients
is always a rational function.  We prove this conjecture and discuss several
closely related problems by Kirillov on the stretched Schubert coefficients,
some already refuted and some still open.

\smallskip

\subsection{Main results} \label{ss:intro-main}
\defn{Schubert coefficients} \ts %
$\{c_{u,v}^w \. : \. u,v,w\in S_\infty\}$ \ts
can be defined as structure constants for \emph{Schubert polynomials}:
\begin{equation}\label{eq:Schub-def}
\Sc_u\. \cdot\.\Sc_v\ = \ \sum_{w\ts\in\ts S_\infty}\. c_{u,v}^w\. \Sc_w\..
\end{equation}
Here $S_\infty$ consists of permutations with all but finitely many fixed points,
and \ts $\Sc_u \in \nn[x_1,x_2,\ldots]$.

It is known that \ts $c_{u,v}^w$ \ts are nonnegative integers as they
count certain intersection numbers.
These coefficients play a major role in \defng{Schubert calculus}, a rapidly developing
area of algebraic geometry, motivated in part by rich connections with
representation theory and algebraic combinatorics, see e.g.\ \cite{AF24,Knu22}.

For a permutation $w\in S_n\ts$, the \defn{Lehmer code}, also called
the \emph{inversion index}, is defined as
$$
\code(w) \, := \, (c_1,\ldots,c_n)\in \nn^n, \quad \text{where} \quad c_i \. := \.
|\{j\,: \, j>i, \. w(j)<w(i)\}|.
$$
Clearly, \ts $c_1+\ldots+c_n = \inv(w)$ \ts is the \emph{number of inversions}.
\defn{Stretched permutations} \ts are defined as
\[
N\satA{} w \, := \, \code^{-1}(N c_1,N c_2,\ldots,N c_n,0,\ldots,0) \ts \in \ts S_{Nn}\..
\]
Finally, the \defn{stretched Schubert coefficients} \ts are defined as \ts $c_{N*u,N*v}^{N*w}\ts$.

For \emph{Grassmannian permutations} (permutations with one descent),
this definition coincides with scaled partitions (see~$\S$\ref{ss:back-Grass}).  Thus, \defna{Kirillov's conjecture}
\cite[Conj.~6.28]{Kir04}, can be viewed as an extension of the Derksen--Weyman \cite[Cor.~3]{DW02}
and Rassart \cite[Cor.~4.2]{Ras04} results (see also \cite{Tha25}), on polynomiality of stretched LR coefficients:

\smallskip

\begin{thm}[{\rm formerly Kirillov's conjecture}{}]\label{t:main}
For all \ts $u,v,w\in S_n$, the generating function
\begin{equation}\label{eq:gf}
P_{u,v,w}(t) \, := \, \sum_{N=1}^\infty \. c_{N*u,N*v}^{N*w} \, t^N
\end{equation}
is rational.  Moreover, the function \ts $f_{u,v,w}(N) := c_{N*u,N*v}^{N*w}$ \ts
is eventually quasi-polynomial.
\end{thm}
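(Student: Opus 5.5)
The plan follows the abstract's strategy: write $c_{u,v}^w$ as a signed sum of lattice-point counts of polytopes whose defining data scale linearly with $N$, and then apply Ehrhart theory. The starting point is a formula for Schubert coefficients that uses only polynomial coefficients. Since $\{\Sc_w\}$ is a basis and each $\Sc_w$ is upper triangular with respect to monomials in a suitable (e.g.\ reverse lexicographic) order, with leading term $x^{\code(w)}$, one can extract $c_{u,v}^w$ by inverting a unitriangular matrix. A cleaner route is to use dual elements, for instance the divided-difference characterization $c_{u,v}^w=\partial_w(\Sc_u\Sc_v)|_{x=0}$, or the coefficient formula $c_{u,v}^w=\sum_{z}\epsilon(z)\,[x^{\code(z)+\dots}](\Sc_u\Sc_v)$ with signs coming from an inverse Kostka-type matrix. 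Either way we obtain
\[
c_{u,v}^w \;=\; \sum_{\alpha}\, \pm\, [x^{\alpha}]\big(\Sc_u\Sc_v\big),
\]
where $\alpha$ ranges over a family of exponent vectors determined by $w$. Each coefficient $[x^\alpha]\Sc_u\Sc_v$ is $\sum_{\beta+\gamma=\alpha}\#\mathrm{PD}(u,\beta)\cdot\#\mathrm{PD}(v,\gamma)$, the number of pairs of pipe dreams with prescribed row weights.

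The key structural step is to describe pipe dreams of $N*u$ as integer points of a polytope $Q_u$ dilated by $N$. I would use the fact that the Lehmer code determines a canonical "bottom" pipe dream (crosses left-justified in row $i$, $c_i$ of them). Every reduced pipe dream is obtained from it by ladder/chute moves, and for stretched codes the configurations should be encodable by integer vectors recording how many crosses sit in each of boundedly many "blocks." The aim is to show that the pipe dreams of $N*u$ of weight $N\alpha'$ (and, after reorganizing, the whole signed sum for $N*w$) are in bijection with lattice points of $N P_j \cap \zz^d$ for finitely many rational polytopes $P_j$ independent of $N$, possibly intersected with finitely many congruence or linear conditions. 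The combination over $\alpha$ and $\beta,\gamma$ must also be uniform in $N$: the index set for $N*w$ should itself be the lattice points of a dilated polytope, so the double sum becomes a signed count of lattice points in a fixed number of parametric polytopes $\{x : A x\le N b\}$.

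Granting this, the conclusion is standard. For a rational polytope $P$, $N\mapsto |NP\cap\zz^d|$ is a quasi-polynomial (Ehrhart), and more generally $N\mapsto |\{x\in\zz^d : Ax\le Nb + c\}|$ is eventually quasi-polynomial. A finite signed sum of eventually quasi-polynomial functions is again eventually quasi-polynomial, and such functions have rational generating functions. This gives both claims of Theorem~\ref{t:main}.

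The main obstacle is the middle step: showing that the signed extraction formula and the pipe dream enumeration really scale linearly. The number of monomials and the number of pipe dreams grow with $n\cdot N$, so naively the dimension grows with $N$. What is needed is a compression showing that the relevant structure of pipe dreams for $N*u$ is captured by a bounded number of integer parameters, subject to linear inequalities whose right-hand sides are linear in $N$. I would first try to prove a block-structure lemma: in any reduced pipe dream for $N*u$, rows $(i-1)N+1,\dots,iN$ behave as $N$ copies governed by counts, following the pattern by which stretched partitions give GT patterns with entries linear in $N$. If this exact block structure fails, I would fall back on a variable-dimension formulation (parametric integer programming with $N$ appearing in the constraint matrix dimension), where eventual quasi-polynomiality would follow from results of Woods or Shen type on parametric Presburger-definable counting functions.
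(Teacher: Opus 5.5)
Your outer architecture matches the paper: a signed sum of lattice-point counts, then Ehrhart theory in the form of Lemma~\ref{l:quasipoly}, then closure of eventual quasi-polynomials under finite signed sums. However, both load-bearing steps are left open, and the routes you sketch for them do not work.

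\textbf{The extraction formula.} You need a formula with a number of terms bounded independently of $N$, each term a lattice-point count in fixed dimension. Inverting the unitriangular Schubert-to-monomial matrix for $N*w$ does not give this. Neither does expanding $\partial_{N*w}$, a composite of $N\ell(w)$ divided differences, on monomials. You offer no argument that either yields such control. The paper instead uses the Postnikov--Stanley formula \eqref{eq:post-stan}, a sum over $\sigma\in S_{Nn}$ of triple products $K_{N*u,a}K_{N*v,b}K_{\wh w_\circ(N*w),c}$. Since $\code(N*u)$ and $\code(N*v)$ vanish beyond row $\mu$, the weight condition, combined with the bound of $Nn-i$ crosses in row $i$, forces $\sigma(i)=i$ and full rows for all $i>\mu$. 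Hence only $\sigma\in S_\mu$ contribute, and everything happens in the first $\mu$ rows.

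\textbf{The encoding.} Code scaling does not stretch rows. The code $\code(N*u)=(Nc_1,\ldots,Nc_n,0,\ldots)$ is supported on the same rows $\le\mu$; only the horizontal extent grows. So your proposed block-structure lemma about rows $(i-1)N+1,\ldots,iN$ targets a structure that is not there. What is needed is a weight-preserving bijection from $\pd(u)$ onto integer vectors indexed by a finite set depending only on $\mu$. Its image must be a finite disjoint union of sets cut out by linear inequalities with $N$-independent coefficients and right-hand sides affine in the code entries. The paper constructs this in Sections~\ref{s:defs}--\ref{s:bijection} via ladder sequences. Each patch of crosses is recorded by its starting row and the sequence of vertical jumps it makes under inverse ladder moves back to $D_\Bot(u)$. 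The validity of those moves becomes the inequalities \eqref{eq:ineqb}--\eqref{eq:ineqcii}.

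\textbf{The homogeneous target is impossible.} Your stated target, lattice points of $NP_j$ (equivalently $\{x: Ax\le Nb\}$) with fixed $P_j$, cannot work. A finite signed sum of Ehrhart quasi-polynomials is a quasi-polynomial for all $N\ge 1$. But by Theorem~\ref{t:offset}, $f_{u,v,w}$ equals $1$ at $N=2n-6$ and vanishes for all $N\ge 2n-5$, which no quasi-polynomial can do. Proposition~\ref{ex:Kostka} rules this out similarly for a single Schubert--Kostka count. So the constant shifts $\mathbf b$ in $A\mathbf x\le N\mathbf c+\mathbf b$ are essential, not an optional generalization.

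\textbf{The fallback.} Appealing to Woods or Shen does not rescue a formulation whose dimension grows with $N$. Those results concern parametric Presburger families in a fixed number of variables.
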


\smallskip

Here a function \ts $f:\nn_{\ge 1} \to \nn$ \ts is \defn{eventually polynomial} \ts
if there exists an integer $M$ and a polynomial~$g$, such that \ts $f(N)=g(N)$ \ts
for all \ts  $N\ge M$.  Similarly, $f$ is \defn{eventually quasi-polynomial},
if there exists integers \ts $m, M$ \ts and polynomials \ts $g_i$ for \ts $0\le i \le m-1$
such that \ts $f(N)=g_i(N)$ \ts for all $N\ge M$ \ts and \ts $N=i \! \mod m$.

\smallskip

\subsection{Prior work and related results} \label{ss:intro-rel}
Theorem~\ref{t:main} is a rare positive result in this direction, and is best understood
in the context of closely related negative results on Schubert coefficients (see below)
and Kronecker coefficients (see~$\S$\ref{ss:finrem-Kron}).

First, in the same paper (\cite[Conj.~6.28]{Kir04}),
Kirillov further conjectured the saturation property for the
stretched Schubert coefficients:
\begin{equation}\label{eq:saturation-Sch}
\qquad c^{w}_{u,v} \. > \. 0 \quad \Longleftrightarrow \quad c^{N\ast w}_{N\ast u, N \ast v}
\. > \. 0 \ \ \text{\em for any \ $N\ge 1$}.
\end{equation}
In \cite{PR26}, the first author and Robichaux disproved \eqref{eq:saturation-Sch}
with a family of examples with \ts $c^w_{u,v}=1$ \ts and \ts $c^{N\ast w}_{N\ast u, N \ast v}=0$
\ts for all \ts $N\ge 2$.  Their smallest example has \ts $u,v,w\in S_4$ (cf.~$\S$\ref{ss:finrem-Kostka}).
This implies that \ts $f_{u,v,w}$ \ts cannot be (quasi-) polynomial, so the \emph{eventual}
\ts (quasi-) polynomiality as in the theorem is the best one could hope for
(see also~$\S$\ref{ss:finrem-conj}).

Second, Kirillov also asked in \cite[$\S$6.9]{Kir04}, whether the (quasi-)
polynomials \ts $f_{u,v,w}$ \ts always have nonnegative coefficients.\footnote{Kirillov
writes in \cite[$\S$6.9]{Kir04}, that this question was not ``extensively tested''. }
For the stretched Kostka and LR coefficients, this is a well-known conjecture
by King, Tollu and Toumazet \cite[$\S$3]{KTT04}, see also \cite{DM06}.  However,
the following example shows that this question by Kirillov
has a negative answer, i.e., there are stretched Schubert coefficients
given by a polynomial which has a negative coefficient:

\smallskip

\begin{prop}%
\label{p:negex}
Take \. $u=3142$, $v=1432$,
$w=4321\in S_4\ts$.  Then:
$$
c_{N*u,N*v}^{N*w} \. = \. N-1 \quad \text{for all} \quad N\ge 1.
$$
\end{prop}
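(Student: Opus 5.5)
The plan is to turn $c_{N*u,N*v}^{N*w}$ into a coefficient extraction that can be computed directly from pipe dreams. First I record the stretched codes: $\code(N*u)=(2N,0,N)$, $\code(N*v)=(0,2N,N)$ and $\code(N*w)=(3N,2N,N)$, with zeros afterwards. The lengths are $3N$, $3N$ and $6N$, so the degree condition $\inv(N*w)=\inv(N*u)+\inv(N*v)$ holds. In each case the last descent is at position at most $3$, so all three Schubert polynomials lie in $\zz[x_1,x_2,x_3]$. Moreover $\code(N*w)$ is weakly decreasing, so $N*w$ is dominant and $\Sc_{N*w}=x_1^{3N}x_2^{2N}x_3^{N}$.

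To isolate $c^{N*w}$ I will use divided differences. For $f$ homogeneous of degree $\inv(N*w)$ we have $\partial_{N*w}\Sc_{w'}=\delta_{w',N*w}$ for $w'$ of the same length. Therefore
$$c_{N*u,N*v}^{N*w}=\partial_{N*w}\bigl(\Sc_{N*u}\Sc_{N*v}\bigr).$$
The alternative I would actually compute with is a triangularity statement. Among the Schubert polynomials $\Sc_{w'}$ with $\inv(w')=6N$ that occur in the product, I claim $x_1^{3N}x_2^{2N}x_3^N$ appears only in $\Sc_{N*w}$. The argument would use pipe dreams: a reduced pipe dream whose row counts are $(3N,2N,N)$, and whose crosses are confined to the region forced by the descents, must fill the dominant staircase-type shape, and that shape is exactly the diagram of $N*w$. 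Granting this, $c^{N*w}_{N*u,N*v}$ equals the coefficient of $x_1^{3N}x_2^{2N}x_3^{N}$ in $\Sc_{N*u}\Sc_{N*v}$.

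That coefficient counts pairs $(P,Q)$ of reduced pipe dreams, with $P$ for $N*u$ and $Q$ for $N*v$, whose row weights sum to $(3N,2N,N)$. I will describe the pipe dreams of both permutations explicitly using ladder/chute moves starting from the bottom pipe dream (code $(2N,0,N)$, resp. $(0,2N,N)$), and record their weight vectors $(a_1,a_2,a_3)$. Row $1$ of $Q$ must receive $3N-a_1$ crosses. Since the bottom pipe dream of $N*v$ has none in row $1$, while $a_1\le 2N$ for $P$ (its first row is frozen at $2N$ crosses), this forces $a_1=2N$ and row $1$ of $Q$ to have exactly $N$ crosses. The remaining freedom is how $Q$ splits its other $2N$ crosses between rows $2$ and $3$, together with the matching choice for $P$. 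I expect this to reduce to a single parameter $k$ ranging over $1,\dots,N-1$, each value giving exactly one pair, which yields $N-1$. As a check, the endpoint values $k=0$ and $k=N$ should be excluded by non-reducedness, consistent with $c^w_{u,v}=0$ at $N=1$.

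The main obstacle is the triangularity step together with the precise enumeration of the pipe dreams of $N*v$ with prescribed row weights; the shape of $N*u$ is essentially rigid. If the triangularity argument via pipe dreams turns out messy, I would fall back on computing $\partial_{N*w}$ directly. This means factoring $N*w$ as a product of the dominant pieces and applying $\partial$'s to the explicit monomial expansions, and the answer should again be the count $N-1$.
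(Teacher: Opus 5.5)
Your reduction to a monomial coefficient has a genuine gap: the triangularity claim is false, so the pairs you plan to count compute the wrong quantity. The claim was that, among the $\Sc_{w'}$ with $c^{w'}_{N*u,N*v}>0$, only $\Sc_{N*w}$ contains $x_1^{3N}x_2^{2N}x_3^{N}$. In fact the coefficient of $x^{\code(N*w)}$ in $\Sc_{N*u}\Sc_{N*v}$ is $\sum_{w'}c^{w'}_{N*u,N*v}K_{w',\code(N*w)}$. Row counts $(3N,2N,N)$ do not force a left-justified (dominant) shape, even when all descents of $w'$ lie in $\{1,2,3\}$. Already at $N=1$ one has
\[
\Sc_{3142}\Sc_{1432}=\Sc_{34512}+\Sc_{35214}+\Sc_{42513}+\Sc_{45123}+\Sc_{51423}+\Sc_{52314}.
\]
Here $x_1^3x_2^2x_3$ occurs in $\Sc_{35214}=x_1^2x_2^2x_3(x_1+x_2)$ and in $\Sc_{42513}=x_1^3x_2x_3(x_2+x_3)$. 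For instance, $\{(1,1),(1,2),(1,3),(2,1),(2,3),(3,1)\}$ is a reduced pipe dream for $42513$ of weight $(3,2,1)$. Meanwhile $\Sc_{4321}$ does not occur, consistent with $c^{4321}_{3142,1432}=\delta_{v,w_\circ u}=0$, since $w_\circ u=2413\neq v$. Concretely, at $N=1$ your count finds the two pairs with weights $(2,0,1)+(1,2,0)$ and $(2,1,0)+(1,1,1)$. Both are genuine reduced pipe dreams, so your endpoints $k=0$ and $k=N$ are \emph{not} excluded by non-reducedness, and the count gives $2$ instead of $0$. No refinement of the enumeration of $\pd(N*v)$ can turn this coefficient into $N-1$.

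What is missing is a mechanism that cancels the contributions of these extra $w'$. The paper gets it from the Postnikov--Stanley formula: $c^{N*w}_{N*u,N*v}=\sum_\sigma \sign(\sigma)\,|A^N_\sigma|$, where $A^N_\sigma$ counts triples of pipe dreams for $N*u$, $N*v$ and $\wh w_\circ(N*w)$ whose weights sum to $\sigma\wh\rho$. Only $\sigma\in S_3$ contribute. Each $|A^N_\sigma|$ is the number of lattice points of an explicit polytope coming from the ladder-sequence encoding, each is a quartic in $N$, and the signed sum collapses to $N-1$.

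Your fallback identity $c^{N*w}_{N*u,N*v}=\partial_{N*w}(\Sc_{N*u}\Sc_{N*v})$ is correct. As written, though, it is only a promise of a computation with $6N$ divided differences, not a proof. The natural way to make it tractable is to pass to $\partial_{\wh w_\circ}\bigl(\Sc_{N*u}\Sc_{N*v}\Sc_{\wh w_\circ(N*w)}\bigr)$, an antisymmetrization. That is precisely the alternating sum the paper evaluates.
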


\smallskip

Third, famously, Schubert coefficients are not known to have a combinatorial
interpretation, %
see e.g.\ \cite{Knu22,Pak-OPAC,Sta00}.  Even when such combinatorial interpretations
are known, for example for permutations with at most two or three descents
\cite{BKPT16,KZ17}, these are given in terms of \emph{puzzles}
(certain tilings of triangular shapes, see also~$\S$\ref{ss:finrem-puzzle}),
and not integer points in convex polytopes.  The same holds for
known \emph{signed combinatorial interpretations} \cite{PR-puzzle,PS09}.
This rules out a direct geometric approach to Theorem~\ref{t:main}.

Finally, a word of caution.  While the stretched Schubert coefficients
are eventually quasi-polynomial, it may take awhile until this
regular behavior starts.  The following result shows that the starting
point \ts $M$ \ts can be linear in~$n$.

\smallskip

\begin{thm}%
\label{t:offset}
Let $n\ge 7$ and let \ts $u,v,w\in S_n$ be given by:
$$u=v=(2,1,3,4,5,\ldots,n-2, n,n-1) \qquad \text{and} \qquad
w=(4,1,2,3,5,\ldots,n-2, n,n-1).
$$
Then:
$$
c_{N*u,N*v}^{N*w} \ts = \ts 1 \ \ \text{for} \ \ N \ts = \ts 2n-6 \qquad \text{and} \qquad
c_{N*u,N*v}^{N*w} \ts =\ts 0 \ \ \text{for all} \ \ N \ts \ge \ts 2n-5.
$$
\end{thm}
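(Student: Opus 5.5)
The plan is to make everything explicit, since $u$, $v$, $w$ have very sparse Lehmer codes. We have $\code(u)=\code(v)=(1,0,\ldots,0,1,0)$, with nonzero entries in positions $1$ and $n-1$, and $\code(w)=(3,0,\ldots,0,1,0)$. Hence $\code(N*u)=(N,0,\ldots,0,N,0,\ldots)$ and $\code(N*w)=(3N,0,\ldots,0,N,0,\ldots)$. The degrees match: $\ell(N*w)=4N=\ell(N*u)+\ell(N*v)$.

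The first step is to write down the permutations $N*u$ and $N*w$ in one-line notation. The point is that their shape depends on how $N$ compares with $n-3$, and with what sits in positions $2,\ldots,n-2$. For example, $N*u$ begins $N+1,1,2,\ldots$, and the values placed in positions $2,\ldots,n-2$ jump past $N+1$ exactly when $N<n-3$. The next step is to check that both permutations are vexillary: their codes have only two nonzero entries, so the shapes are $(N,N)$ and $(3N,N)$. Then I would invoke the flagged Schur formula for vexillary Schubert polynomials. With flag $(1,n-1)$ this should give
$$\Sc_{N*u}=x_1^N\, h_N(x_2,\ldots,x_{n-1}),\qquad \Sc_{N*w}=x_1^{3N}\,h_N(x_2,\ldots,x_{n-1}).$$
The flags have to be recomputed carefully in the regime where $N$ is small compared to $n$, since the essential set may move there.

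The second step is the standard extraction $c^{w'}_{u',v'}=\partial_{w'}\bigl(\Sc_{u'}\Sc_{v'}\bigr)$, valid when $\ell(w')=\ell(u')+\ell(v')$. Here it reads
$$f_{u,v,w}(N)=\partial_{N*w}\Bigl(x_1^{2N}\,h_N(x_2,\ldots,x_{n-1})^2\Bigr).$$
I would expand $h_N^2=\sum_{k=0}^N s_{(2N-k,k)}(x_2,\ldots,x_{n-1})$ by Pieri. Then I would choose a reduced word for $N*w$ that first moves the large first entry $3N+1$ into place and then handles the short block near position $n-1$. This reduces the divided difference to a transparent computation. Alternatively, I would argue by degree: every Schubert polynomial in the expansion of $x_1^{2N}s_\mu(x_2,\ldots,x_{n-1})$ has $x_1$-degree at most $2N$ plus whatever can be transported into $x_1$. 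The divided differences $\partial_1,\ldots$ acting on the first $n-1$ variables can transport at most roughly $n-3$ units per row of $\mu$. This should leave a budget of the form $2N\le 2(n-3)+\ldots$, and the budget is exactly what produces the threshold $N=2n-6$.

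Finally, I would verify the two claims separately. For $N\ge 2n-5$ I would show that this budget is insufficient, so no term reaches the monomial pattern needed for $\Sc_{N*w}$ and the coefficient vanishes. A cleaner route here may be a Bruhat or flag-containment obstruction: show directly that $N*u\not\le N*w$ in the appropriate $k$-Bruhat sense, or that the rank conditions fail. For $N=2n-6$ I would show that exactly one term survives, namely the extreme Pieri summand $\mu=(2N,0)$ or $\mu=(N,N)$ (to be determined), with coefficient $1$.

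I expect the main obstacle to be the bookkeeping in the second step: pinning down exactly which term survives, and why the cutoff is sharp at $2n-6$ rather than at about $(n-3)/2$, which is where the one-line notation of $N*w$ first changes form. I would first test the formula for $n=7,8$ by direct computation to calibrate the degree-budget argument before writing the general case.
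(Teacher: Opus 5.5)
Your first step is false, and everything after it depends on it. Having only two nonzero code entries does not make a permutation vexillary. Positions $1,2,n-1,n$ of $u$ carry $2,1,n,n-1$, which is a $2143$ pattern; indeed $u=s_1s_{n-1}$. Every $N*u$ contains $2143$ as well: positions $1,2,n-1$ carry $N+1,1,N+n-1$, and a value strictly between $N+1$ and $N+n-1$ occurs later. So $\Sc_{N*u}$ is not a flagged Schur polynomial, and your formula already fails at $N=1$: $\Sc_u=\Sc_{s_1}\Sc_{s_{n-1}}=x_1(x_1+x_2+\cdots+x_{n-1})$, not $x_1(x_2+\cdots+x_{n-1})$.

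This is not cosmetic, because the dropped terms are the whole mechanism of the theorem. If $\Sc_{N*u}=x_1^N h_N(x_2,\ldots,x_{n-1})$ held, then $\deg_{x_1}(\Sc_{N*u}\Sc_{N*v})=2N<3N$ for every $N$. Then $[x^{\code(N*w)}]\Sc_{N*u}\Sc_{N*v}=0$, and Schubert positivity would force $c^{N*w}_{N*u,N*v}=0$ for all $N$, contradicting the value $1$ at $N=2n-6$.

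What actually happens is that crosses in row $n-1$ of $D_\Bot(N*u)$ can be laddered up into row $1$. Row $1$ keeps its crosses in columns $1,\ldots,N$, a ladder move landing in row $1$ needs the cell to its left empty, and $N*u\in S_{N+n-1}$. So these crosses can only land in columns $N+2,\ldots,N+n-2$. Hence $\deg_{x_1}\Sc_{N*u}\le N+n-3$, with equality attainable once $N\ge n-3$, and $2n-6$ is exactly where $3N\le 2(N+n-3)$ stops holding. Your ``transport by divided differences'' heuristic attributes this extra $x_1$-degree to $\partial_{N*w}$, but it lives in $\Sc_{N*u}$ itself.

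With this degree bound, the vanishing for $N\ge 2n-5$ needs neither divided differences nor a reduced word. The product $\Sc_{N*u}\Sc_{N*v}$ is Schubert-positive and $K_{N*w,\code(N*w)}=1$, so
\[
c^{N*w}_{N*u,N*v}\le[x^{\code(N*w)}]\,\Sc_{N*u}\Sc_{N*v}=0,
\]
since $\code(N*w)_1=3N>2N+2n-6$. Your Bruhat alternative cannot work: one checks $N*u\le N*w$ for these $N$, and comparability is only a necessary condition.

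For $N=2n-6$ your plan contains no argument (the surviving term is ``to be determined''). Two things are needed.
\begin{itemize}
\item[(a)] Show $[x^{\alpha}]\Sc_{N*u}^2=1$ for $\alpha=\code(N*w)=(3N,0,\ldots,0,N,0)$. The degree bound forces each factor to contribute exactly $x_1^{3N/2}x_{n-1}^{N/2}$. Exactly one pipe dream for $N*u$ has that weight: move $n-3=N/2$ crosses from row $n-1$ to row $1$. Hence exactly one $w'$ has $c^{w'}_{N*u,N*v}K_{w',\alpha}>0$.
\item[(b)] Show this $w'$ is $N*w$. This follows from $\code(w')\le_{\revdom}(2N,0,\ldots,0,2N,0)$, since pipe-dream weights are reverse-dominated by the code, together with $w'$ having a pipe dream of weight $\alpha$.
\end{itemize}
This is the paper's route, and it never needs an explicit formula for $\Sc_{N*u}$.
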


\smallskip

Note that both Proposition~\ref{p:negex} and Theorem~\ref{t:offset}
refute the saturation property \eqref{eq:saturation-Sch}.  %
Theorem~\ref{t:offset} refutes the \ts $\Rightarrow$ \ts implication,
the same as the examples given in~\cite{PR26}.
For the saturation property \eqref{eq:saturation} of LR coefficients,
this is the ``easy direction'' known for decades, see e.g.\ \cite{Zel99}.
On the other hand, Proposition~\ref{p:negex} refutes the implication~$\ts\Leftarrow$.
For the saturation property \eqref{eq:saturation} of LR coefficients,
this is the ``hard direction'' proved in~\cite{KT99}.

\smallskip

\subsection{Proof idea} \label{ss:intro-proof}
In our proof of Theorem~\ref{t:main}, we show that Schubert
coefficients are given as an alternating sum of the numbers of integer
points in polytopes given by a large system of linear inequalities.
We believe this is the first result of this type, as other notable
general formulas involve alternating sums over chains in the Bruhat order
\cite{Bil99,PS09,PR-signed} or puzzles \cite{PR-puzzle}.

We then show that for stretched Schubert coefficients the polytopes as above
are defined by the same linear inequalities, with the parameter~$N$ only
in the constraints.  We then use a known result, Lemma~\ref{l:quasipoly} by Chen, Lil
and~Sam \cite{CLS12},  to conclude the eventual quasi-polynomiality.  This is
a well-known phenomenon established in a much larger geometric context, see
\cite{Shen18,Woo14}.

Let us emphasize that our general approach is effective and gives an upper
exponential bound \ts $M\le 2^{3n^4}$ \ts for the starting point,
and a doubly exponential upper bound \ts $m \le \big((3 n)^{3 n^2}\big)!$ \ts
for the period, see Theorem~\ref{t:quant}.  We did not try to
optimize these bounds, which likely can be significantly improved.
In fact, our experiments suggest that \ts $f_{u,v,w}(N)$ \ts
are eventually polynomial (so have period $m=1$),
with a linear starting point, see~$\S$\ref{ss:finrem-conj}.

\smallskip

\subsection{Computational applications} \label{ss:intro-cs}
For a permutation \ts $w\in S_n$, let \ts
$\Exc(w):= \{i\.:\. w(i)>i\}$ \ts denote the set of \defn{exceedances},
and note that \ts $\code_i(w)=0$ \ts for all \ts $i \notin \Exc(w)$.
Let
$$
\Range(w) \,:= \, [\min \Exc(w), \max \Exc(w)] \, \subseteq \, \{1,2,\ldots,n-1\}
$$
denote the \defn{range} of~$w$.   We consider the problem of computing
Schubert coefficients for permutations of bounded range.  In this case,
permutations are defined by the restriction of the code to the range:
$$
\big\{\code_i(w) \. : \. i\in \Range(w)\big\}.
$$
This is a typical setting in \emph{parameterized complexity}, cf.~\cite{C+15},
which allows us to skip all zeroes in the Lehmer code as part of the input.

\smallskip

\begin{thm}\label{t:cs}
Let \ts $u,v,w\in S_n$ \ts be three permutations with a bounded total range
\begin{equation}\label{eq:boundedrange}
\psi \ := \    \max\ts\{\Exc(u),\Exc(v),\Exc(w)\} \. - \. \min\ts\{\Exc(u),\Exc(v),\Exc(w)\} \. + 1\ts.
\end{equation}
We assume that the permutations \ts $u,v,w$ \ts are given by the restrictions of
their Lehmer codes to the range.  Then
the Schubert coefficient \ts $c^w_{u,v}$ \ts can be computed in time
$$e^{O(\psi^{2.5})} \. (\log n)^{O(\psi^2\log \psi)}\..
$$
\end{thm}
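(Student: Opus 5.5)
Theorem~\ref{t:cs} will follow from the same formula that drives Theorem~\ref{t:main}: $c^w_{u,v}$ is an alternating sum of the numbers of integer points in polytopes. The plan is to make this formula explicit enough that, for bounded total range~$\psi$, each polytope has dimension polynomial in~$\psi$ and facet data whose bit size is polynomial in~$\psi$ and $\log n$. We then count the points of each polytope with Barvinok's algorithm, or with a Lenstra-type algorithm in fixed dimension combined with enumeration.

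\emph{Step 1: reduce to a window of size $\psi$.} Shift indices so that all exceedances lie in $\{1,\ldots,\psi\}$. Every Lehmer code entry outside this window is zero, so $u,v,w$ are determined by $\psi$ integers each, and these integers are at most~$n$. Thus the input has size $O(\psi\log n)$. The Schubert polynomials involved, and hence the coefficients, depend only on these codes; stabilization under $S_n\hookrightarrow S_{n+1}$ and the shift invariance of the code description justify this.

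\emph{Step 2: write $c^w_{u,v}$ via pipe dreams.} Following the proof approach of \S\ref{ss:intro-proof}, express $\Sc_u\Sc_v$ and the triangularity of Schubert polynomials through monomial (pipe dream) expansions. Inverting the unitriangular transition matrix between monomials and Schubert polynomials gives an alternating sum. Each term counts pipe dreams, equivalently integer points of polytopes whose coordinates record how many crosses lie in each row and column segment. Because only $\psi$ rows carry nonzero code, the pipe dreams are described by a bounded number of \emph{segment lengths}. I would aim to show that the number of variables is $O(\psi^2)$ and that the number of terms in the alternating sum is $e^{O(\psi^{2.5})}$. The latter bound should come from counting the admissible ``shapes'' (combinatorial types) of the signed sum; the exponent $2.5$ suggests that a Catalan- or permutation-type count on $\psi$-size objects (roughly $\psi!^{O(\sqrt\psi)}$) has to be bounded. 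The entries $n$ and the code values appear only in the right-hand sides of the inequalities, exactly as $N$ does in the stretching argument.

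\emph{Step 3: count lattice points.} Apply Barvinok's algorithm in dimension $d=O(\psi^2)$. Its running time is $L^{O(d\log d)}$, where $L=O(\log n)$ is the bit size of the constraints. This gives $(\log n)^{O(\psi^2\log\psi)}$ per polytope. Multiplying by the number of terms from Step~2 gives the claimed bound.

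The main obstacle is Step~2. The difficulty is showing that the alternating-sum formula involves only $e^{O(\psi^{2.5})}$ polytopes, each of dimension $O(\psi^2)$, uniformly in~$n$. The naive inversion of monomials into Schubert polynomials ranges over permutations in $S_n$, and that range must be cut down to permutations supported in the window. My first attempt would use the fact that all $w'$ appearing in the support have codes vanishing outside the window, and then bound the number of distinct combinatorial pipe-dream types by a product of $\psi$ factors, each of order $e^{O(\psi^{1.5})}$.
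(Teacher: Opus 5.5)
Your outline (an alternating sum of lattice-point counts, then Barvinok's algorithm in dimension $O(\psi^2)$) has the right shape. But Step~2, which you flag as the obstacle, is essentially the whole proof, and the route you sketch there would not work as stated. ``Inverting the unitriangular monomial-to-Schubert matrix'' does not by itself give a usable formula. A generic triangular inversion is a sum over chains. Even with an explicit inverse, summing over the intermediate exponent vectors $a$ (supported on $\psi$ rows, with $|a|=\ell(w)$) can already cost $n^{\Theta(\psi)}$ terms, far more than $(\log n)^{O(\psi^2\log\psi)}$.

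The paper instead uses the Postnikov--Stanley formula (Theorem~\ref{t:kostasum}), whose only explicit sum is over $\sigma$. It keeps the condition $\wtx(P_1)+\wtx(P_2)+\wtx(P_3)=\sigma\rho_n$ as linear equations inside a single polytope carrying the variables of all three pipe dreams. The third factor there is $K_{w_\circ w,c}$, and $w_\circ w$ has code $c_i(w_\circ w)=n-i-c_i(w)$, nonzero in every row, so it is not of bounded range. The cut-down you are looking for is the argument in the proof of Lemma~\ref{l:pdtrips}: for $i>\mu$ the weight equation forces $\sigma(i)=i$ and $\wtx(P_3)_i=n-i$. Hence only $\sigma\in S_\mu$ with $\mu\le\psi$ survive, and all three pipe dreams are encoded on the first $\mu$ rows.

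The second missing ingredient is a proof that pipe dreams are integer points of polytopes at all. Recording how many crosses lie in row or column segments does not make reducedness, or the condition that the pipes read $u$, linear. This is the content of Sections~\ref{s:defs}--\ref{s:bijection}. A pipe dream $D$ is encoded by the sizes of the patches moved along a canonical sequence of inverse ladder moves down to $D_\Bot(u)$ (Lemma~\ref{l:pdalgo}). For each fixed support $\vk$, the image is exactly the set of integer solutions of \eqref{eq:ineqb}--\eqref{eq:ineqcii}, with coefficients in $\{0,\pm1\}$ and code entries only on the right-hand side (Theorem~\ref{t:bij}, Remark~\ref{r:ineqform}). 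Remark~\ref{r:numpatches} (at most one nonzero ladder index per pair of rows $i<j$) then gives:
\begin{enumerate}
\item dimension at most $3\binom{\psi}{2}$ and $O(\psi^3)$ constraints;
\item input size $L=O(\psi^3\log n)$ rather than your $O(\log n)$, which is harmless;
\item a bound on the number of terms, obtained by choosing $\sigma\in S_\psi$ and, for each of the three permutations and each pair $i<j$, either no index or one index $\bi=(i,k_1,\ldots,k_\ell)$ with $|\bi|=j$.
\end{enumerate}
The paper bounds the number of such choices per pair by $p(j-i)+1$, with $p$ the partition function, which is where $e^{O(\psi^{2.5})}$ comes from.

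If you reconstruct this count, be careful: $(k_1,\ldots,k_\ell)$ is an ordered composition of $j-i$, and both orders really occur from row $1$ to row $4$. The index $(1,1,2)$ appears in Example~\ref{ex:pdalgo}. The index $(1,2,1)$ appears for $u=163524$ with the pipe dream whose crosses are $(1,4)$, $(2,1),\ldots,(2,4)$, $(3,1)$, $(4,1)$. So the straightforward count per pair is $2^{j-i-1}+1$. That yields only $e^{O(\psi^3)}(\log n)^{O(\psi^2\log\psi)}$, and reaching the exponent $2.5$ would need an extra argument.
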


\smallskip

Note that the bit-length of code entries is $O(\log n)$, so
the total input size of permutations \ts $u,v,w$ \ts as in the theorem,
is \ts $O(\psi \log n)$.  This implies that for every fixed~$\psi$, the theorem
gives a polynomial time algorithm.  In the opposite extreme,
when \ts $\psi=n$, this bound is not too far off the easy \ts $2^{O(n^2)}$ \ts
bound given by exhaustive enumeration of all pipe dreams in the
Postnikov--Stanley formula \eqref{eq:post-stan}.

Theorem~\ref{t:cs} is the first rigorous algorithmic result for Schubert
coefficients.  We refer to
\cite{BV08,HS17} for theoretical and computational approaches to
computing Schubert coefficients, but without running time analysis.
In computational complexity, it is known that computing Schubert
coefficients is in \ts $\GapP=\SP-\SP$, see \cite[$\S$10]{Pak-OPAC},
but it is open whether the problem is $\SP$-hard, see \cite[Conj.~1.2]{PR-vanish}.
By contrast, the vanishing problem \ts $\{c^w_{u,v}=^?0\}$ \ts is in $\BPP$,
i.e.\ can be solved in probabilistic polynomial time \cite{PR-BPP}.  Finally,
we note that using the Lehmer code to encode permutations is not unusual
in the context of Schubert polynomials, see e.g.\ \cite[$\S$2.1]{Man01}
and \cite[$\S$6]{MPY22}.

The proof of Theorem~\ref{t:cs} is based on
the proof of Theorem~\ref{t:main} and %
the celebrated \defng{Barvinok's algorithm} \ts for counting the number
of integer points in convex polytopes of bounded dimension,
see \cite{Bar08,BP99} for details.

\smallskip

\subsection{Structure of the paper} \label{ss:intro-structure}
We start with the Schubert calculus background in Section~\ref{s:back},
where we include standard definitions, notation and basic results in the area.
In Section~\ref{s:defs}, we show how pipe dreams can be uniquely encoded
by certain ladder sequences.  In a technical Section~\ref{s:bijection},
we prove a complete set of inequalities such ladder sequences must
satisfy.  We give the proof of Theorem~\ref{t:main} and its effective
version (Theorem~\ref{t:quant}) in Section~\ref{s:proof}.
In Section~\ref{s:ex}, we prove Propositions~\ref{p:negex} and
Theorem~\ref{t:offset} giving explicit examples of stretched Schubert
coefficients (cf.~$\S$\ref{ss:finrem-puzzle}).  We conclude with final remarks and open problems
in Section~\ref{s:finrem}.

\medskip

\section{Basic definitions and background on Schubert polynomials}\label{s:back}

\subsection{Basic notation}\label{ss:back-basic}
We use \ts $\nn=\{0,1,2,\ldots\}$, \ts $\nn_{\ge 1}=\{1,2,\ldots\}$ \ts and \ts $[n]=\{1,2,\ldots,n\}$.  We use \ts $\kk$ \ts  to denote the set of infinite sequences with entries in $\nn$ with finite support.  When writing such sequences, we omit the infinite tail of zeros, and write only the prefix with the support of the sequence, so e.g.\ \ts $30120000\ldots$ \ts is written as \ts $3012$.

We use $S_n$ to denote the symmetric group, which we view as the group of
permutations of~$[n]$. Denote by \ts $\iota$ \ts the inclusion $\iota: S_n \. \hookrightarrow \. S_{n+1}$ \ts defined by $w(1)\cdots \. w(n) \, \mapsto \, w(1) \ \cdots \. w(n) \,\. n+1$.
As above, let \ts $S_\infty=\bigcup_{n\geq 1} S_n$ \ts
denote the group of permutations of \ts $\nn_{\ge 1}=\{1,2,\ldots\}$ \ts with all but finitely many fixed points, where the inclusion is given by~$\iota$. By analogy with infinite sequences, when writing such permutations we omit the tail of fixed points, so e.g.\ $34125678\ldots$ \ts is written as $3412$.

For convenience, we use both the sequence and word notation for permutations, so for example \ts $(3,1,4,2)$ \ts and \ts $3142$ \ts correspond to the same permutation in~$S_4\ts$.
For a permutation $w\in S_n\ts$, let \ts $\ell(w):=|\{(i,j)\.:\. 1\le i<j\le n, \. w(i)>w(j)\}|$ \ts denote the \defn{number of inversions} \ts in~$w$. Finally, let \ts $\Des(w):= \{i\.:\. w(i)>w(i+1)\}$ \ts denote the \defn{set of descents} \ts in~$w$. A permutation \ts $w\in S_n$ \ts is \defn{Grassmannian} \ts if it has at most one descent.

\smallskip

\subsection{Schubert coefficients}\label{ss:back-Sch}
Below we give a brief reminder of few basic results on Schubert polynomials.
We refer to \cite{Knu16,Mac91,Man01} for standard introductions to combinatorial
aspects, and to \cite{AF24,Ful97} for geometric aspects.

\defn{Schubert polynomials} \ts give a graded $\mathbb{Z}$-linear basis of polynomials ${\mathbb Z}[x_1,x_2,\ldots]$,
which we define recursively. Let \. $w_\circ=(n,n-1,\ldots,1) \in S_n\ts$.  Define
\begin{align*}
    {\mathfrak S}_{w_\circ}(x_1,\ldots,x_n)&:=x_1^{n-1}x_2^{n-2}\cdots x_{n-1}\,, \ \ \text{ and }\\
    {\mathfrak S_w}(x_1,\ldots,x_n)&:=\partial_i {\mathfrak S}_{ws_i}(x_1,\ldots,x_n) \ \ \text{ if } \ \ w(i)<w(i+1),
\end{align*}
where
$$\partial_i f \ := \ \frac{f-s_if}{x_i-x_{i+1}}\,.$$
Note that under the inclusion \.
$\iota: S_n\hookrightarrow S_{n+1}$ \. we have \.
${\mathfrak S}_w={\mathfrak S}_{\iota(w)}$.
This allows us to consider ${\mathfrak S}_w$ for each $w\in S_{\infty}$.
We do not need this algebraic definition, and instead only use a combinatorial
definition in terms of pipe dreams (see Theorem~\ref{t:schub}).

As the Schubert polynomials \ts $\{{\mathfrak S}_w\}$ \ts form a polynomial basis, we can define the \defn{Schubert coefficients}
(also called \emph{Schubert structure coefficients}) \ts
$\{c_{u,v}^w\}$ \ts by \eqref{eq:Schub-def}.
As we mentioned in the Introduction, although %
Schubert coefficients are nonnegative integers, 
they have no known combinatorial interpretation in full generality.

\smallskip

\smallskip

\subsection{Lehmer codes}\label{sec:encode}
For a permutation \ts $w\in S_\infty\ts$, its \emph{Lehmer code}  \ts
is the vector
\[\code(w)\, := \, (c_1(w),c_2(w),\ldots) \in \kk,\]
where \ts $c_i(w)$ \ts is the number
\[c_i(w)\, := \, |\{j\in [n]\. : \.  j>i, w(i)>w(j)\}|,
\]
defined as in the introduction.
We shorten this to ``code'' when the
context is clear.
Note that the map \ts $\code: S_\infty \to \kk$ \ts is a bijection, i.e.\
\ts $\code(w)$ \ts uniquely determines \ts $w\in S_{\infty}$ \ts and \ts $\code^{-1}$ \ts
is well defined.

\smallskip

\subsection{Code scaling}
\label{ss:Kir-prelim}
Let \ts $w\in S_n$ \ts be a permutation with \ts
$\code(w)=(c_1,\ldots,c_n)$.  Recall that for a given integer \ts
$N\ge 1$, the \defn{code scaling} \ts $N\ast w$ \ts is the
unique permutation with code \ts $(N c_1,\ldots,N c_n)$.

Observe that \ts $\ell(N\ast w)= N \ell(w)$.  Thus, we have
\ts $\ell(u)+\ell(v)=\ell(w)$ \ts implies \ts
$\ell(N\satA{}u)+\ell(N\satA{}v)=\ell(N\satA{}w)$ \ts for all $N>1$.
Additionally, it is clear by construction that code scaling preserves
the underlying descent sets of the permutations:  \ts $\Des(N\ast w) = \Des(w)$.

\smallskip

\subsection{Grassmannian permutations} \label{ss:back-Grass}
A permutation is called \emph{Grassmannian} \ts if it has at most one descent. It is  easy to see that the code of a Grassmannian
permutation $w$ is a partition $\la(w)$ written in reverse,
so this code scaling corresponds to the usual multiplication
of partitions by a constant~$N$, as in~$\S$\ref{ss:intro-fore}.

Moreover, the corresponding Schubert polynomials are
symmetric and coincide with Schur polynomials: \ts $\Sch_w=s_{\la(w)}$.
Thus, Schubert coefficients can be viewed as advanced generalizations
of LR~coefficients, see e.g.\ \cite[$\S$2.6.4]{Man01}. This relates Kirillov's conjecture proved in Theorem~\ref{t:main}
with polynomiality of LR coefficients as mentioned in~$\S$\ref{ss:intro-main}.

\smallskip

\subsection{Pipe dreams}\label{ss:pd} Let \ts $u\in S_n\ts$.  Define
\[\Delta_n:=\{(i,j)\in [n]\times [n]\. : \.  i+j \le n+1\},\]
and let $D\subseteq \Delta_n$. Visualize $D$ as a triangular grid of
square tiles using matrix indexing. We place an \defn{elbow tile} \ts
in positions $(i,j)\in \Delta_n-D$ and a \defn{cross tile} \ts in positions $(i,j)\in D$ (see Figure~\ref{fig:tiles}).
\newdimen{\loopcellsize}\setlength{\loopcellsize}{0.8cm}

\begin{figure}[hbt]
\begin{tikzpicture}[scale=0.5]
    \node[loop]{\wplaq{a} &&&& \wplaq{c}\\};
\end{tikzpicture}\caption{\label{fig:tiles}Elbow tile (left) and cross tile (right)}
\end{figure}
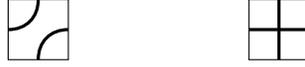

Doing this will connect the top boundary with the left boundary by pipes. Label the pipes entering the top boundary $1,2,\ldots, n$ \ts from left to right. Then, we say that $D$ is a \defn{pipe dream for $u$} if no two curves (\emph{pipes}) cross each other more than once, and if we read the labels of the pipes that come out the left edge top to bottom, we get the permutation $u$.
\newdimen{\loopcellsize}\setlength{\loopcellsize}{0.5cm}

\begin{ex}\label{ex:pd}
    Let $u=14862357\in S_8$ and let
    \[D:=\{(1,3),(1,5),(2,1),(2,3),(2,5),(2,6), (3,1),(3,2),(3,3),(4,1)\}\subseteq\Delta_8.\]
    Then $D$ can be visualized as the pipe dream on the left of Figure \ref{fig:expd}. We will use this $D$ as a running example throughout the paper.

    \begin{figure}[hbt]
\begin{minipage}{.45\textwidth}\centering

        \begin{tikzpicture}[scale=0.5]

\node[loop]{
\wplaq{a}&\plaq{a}&\plaq{c}&\plaq{a}&\wplaq{c}&\wplaq{a}&\wplaq{a}&\plaq{a}\\
\plaq{c}&\wplaq{a}&\wplaq{c}&\wplaq{a}&\plaq{c}&\plaq{c}&\plaq{a}\\
\plaq{c}&\wplaq{c}&\wplaq{c}&\wplaq{a}&\plaq{a}&\plaq{a}\\
\plaq{c}&\wplaq{a}&\wplaq{a}&\wplaq{a}&\plaq{a}\\
\wplaq{a}&\plaq{a}&\plaq{a}&\plaq{a}\\
\wplaq{a}&\plaq{a}&\plaq{a}\\
\wplaq{a}&\plaq{a}\\
\plaq{a}\\
};
\node at (-3.5,4.45) {$1$};
\node at (-2.5,4.45) {$2$};
\node at (-1.5,4.45) {$3$};
\node at (-0.5,4.45) {$4$};
\node at (0.5,4.45) {$5$};
\node at (1.5,4.45) {$6$};
\node at (2.5,4.45) {$7$};
\node at (3.5,4.45) {$8$};
\node at (-4.45,3.5) {$1$};
\node at (-4.45,2.5) {$4$};
\node at (-4.45,1.5) {$8$};
\node at (-4.45,0.5) {$6$};
\node at (-4.45,-0.5) {$2$};
\node at (-4.45,-1.5) {$3$};
\node at (-4.45,-2.5) {$5$};
\node at (-4.45,-3.5) {$7$};
\end{tikzpicture}
\end{minipage}
\qquad
\begin{minipage}{.45\textwidth}\centering
\vspace{-10px}
        \begin{tikzpicture}[scale=0.5]
\node at (0,0.79) {$1$};
\node at (1,0.79) {$2$};
\node at (2,0.79) {$3$};
\node at (3,0.79) {$4$};
\node at (4,0.79) {$5$};
\node at (5,0.79) {$6$};
\node at (6,0.79) {$7$};
\node at (7,0.79) {$8$};

\node at (-1,0) {$1$};
\node at (-1,-1) {$4$};
\node at (-1,-2) {$8$};
\node at (-1,-3) {$6$};
\node at (-1,-4) {$2$};
\node at (-1,-5) {$3$};
\node at (-1,-6) {$5$};
\node at (-1,-7) {$7$};

\node at (0,0) {$\cdot$};
\node at (1,0) {$\cdot$};
\node at (2,0) {$+$};
\node at (3,0) {$\cdot$};
\node at (4,0) {$+$};
\node at (5,0) {$\cdot$};
\node at (6,0) {$\cdot$};
\node at (7,0) {$\cdot$};

\node at (0,-1) {$+$};
\node at (1,-1) {$\cdot$};
\node at (2,-1) {$+$};
\node at (3,-1) {$\cdot$};
\node at (4,-1) {$+$};
\node at (5,-1) {$+$};
\node at (6,-1) {$\cdot$};

\node at (0,-2) {$+$};
\node at (1,-2) {$+$};
\node at (2,-2) {$+$};
\node at (3,-2) {$\cdot$};
\node at (4,-2) {$\cdot$};
\node at (5,-2) {$\cdot$};

\node at (0,-3) {$+$};
\node at (1,-3) {$\cdot$};
\node at (2,-3) {$\cdot$};
\node at (3,-3) {$\cdot$};
\node at (4,-3) {$\cdot$};

\node at (0,-4) {$\cdot$};
\node at (1,-4) {$\cdot$};
\node at (2,-4) {$\cdot$};
\node at (3,-4) {$\cdot$};

\node at (0,-5) {$\cdot$};
\node at (1,-5) {$\cdot$};
\node at (2,-5) {$\cdot$};

\node at (0,-6) {$\cdot$};
\node at (1,-6) {$\cdot$};

\node at (0,-7) {$\cdot$};

\end{tikzpicture}
\end{minipage}

\caption{\label{fig:expd}Pipe dream $D$ for $u$ with pipes drawn in (left) and simplified version with only dots and crosses (right)}
\end{figure}
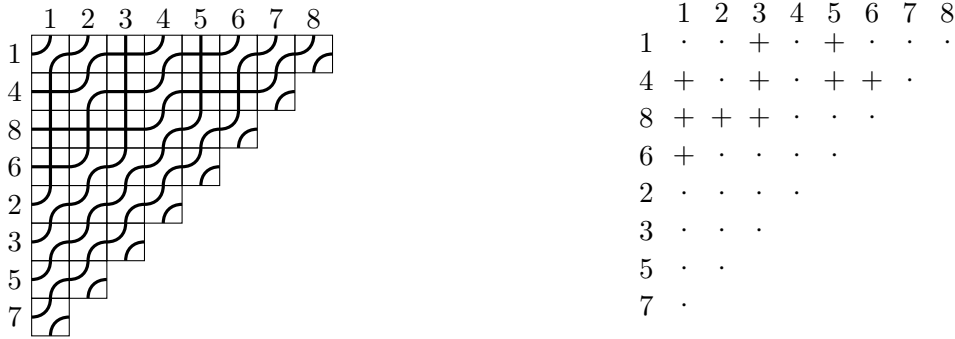
\end{ex}

As shown on the right of Figure~\ref{fig:expd}, we can display the information of a pipe dream by replacing each elbow tile with a \ts $\cdot$ \ts and each cross tile with a \ts $+$. For ease of reading, we will adapt this notation for the rest of the paper, while still referring to such configurations as pipe dreams. We will then refer to elbow tiles as ``dots'' and cross tiles as ``crosses''.

Given a pipe dream $D$, we define a \defn{patch of crosses} to be a subset $P\subseteq D$ such that
\[P=\{(i,j), (i,j+1),\ldots, (i,j+k-1)\},\]
for some $i,j,k\in \nn_{\ge 1}$. The \defn{size} of the patch $P$ is simply $|P|=k$.

Denote by $\pd(u)$ the set of pipe dreams for $u$. The condition that no two pipes cross more than once is often referred to in the literature as being \emph{reduced}, but in this paper, we will assume all pipe dreams are reduced. The \defn{weight} of a pipe dream $D$ for $u\in S_n$ is the tuple $\wtx(D):=(\cw_1,\cw_2,\ldots)$, where $\cw_i$ is the number of crosses in row $i$ of $D$. In Example~\ref{ex:pd}, $\wtx(D)=(2,4,3,1)$.

The following celebrated result gives a combinatorial definition of Schubert polynomials:

\smallskip

\begin{thm}[{\rm Bergeron--Billey \cite[Cor.~3.3]{BB93}}{}]\label{t:schub}
    Let \ts $u\in S_n$. Then:
    \begin{equation}\label{eq:schubpd}
        \Sch_u(x) \ = \ \sum_{D\ts\in \ts\pd(u)} \. x^{\wtx(D)}.
    \end{equation}
\end{thm}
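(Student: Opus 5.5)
The plan is to verify that the pipe dream generating function $G_u(x):=\sum_{D\in\pd(u)}x^{\wtx(D)}$ satisfies the recursion defining $\Sch_u$. Concretely, we check two things. First, $G_{w_\circ}=x_1^{n-1}\cdots x_{n-1}$. Second, $G_w=\partial_i G_{ws_i}$ whenever $w(i)<w(i+1)$. Since the recursion determines $\Sch_w$ uniquely for all $w\in S_n$ (every $w$ is reached from $w_\circ$ by a chain of such steps), this gives \eqref{eq:schubpd}.

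The base case is immediate. A reduced pipe dream for $w_\circ$ has $\ell(w_\circ)=\binom n2=|\Delta_{n-1}|$ crosses, where $\Delta_{n-1}$ is the set of cells of $\Delta_n$ with $i+j\le n$. Only these cells can hold crosses, since a cross on the antidiagonal $i+j=n+1$ would be traversed by a single pipe. So $\Delta_{n-1}$ is the unique pipe dream for $w_\circ$, with weight $(n-1,n-2,\ldots,1)$.

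For the recursion, I would use the Fomin--Stanley nilCoxeter encoding. Let $\mathcal N_n$ be the nilCoxeter algebra with generators $u_1,\ldots,u_{n-1}$ satisfying the braid relations and $u_j^2=0$. It has basis $\{u_w\}$, and $u_w u_{s_j}=u_{ws_j}$ if $\ell(ws_j)>\ell(w)$ and $0$ otherwise. Set $h_j(x)=1+xu_j$ and $A_i(x)=h_{n-1}(x)h_{n-2}(x)\cdots h_i(x)$, and define
$$\mathcal S(x):=A_1(x_1)A_2(x_2)\cdots A_{n-1}(x_{n-1}).$$
Expanding the product, each term corresponds to a choice of cells in $\Delta_n$: choosing $x_iu_{i+j-1}$ in $A_i$ corresponds to a cross at $(i,j)$. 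The ordering of factors (rows top to bottom, each row read right to left) matches the standard reading word of a pipe dream. A product of $u$'s is nonzero exactly when that word is reduced, which is exactly the condition that no two pipes cross twice, and in that case it equals $u_w$ for the permutation $w$ the pipe dream realizes. I have to check the convention carefully: the left-edge reading as defined here may produce $w$ or $w^{-1}$, and I would adjust the factor order if needed. Granting this, $\mathcal S(x)=\sum_w G_w(x)\,u_w$.

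The main obstacle is the identity that makes $\partial_i$ act correctly. The tools are the relations $h_j(x)h_j(y)=h_j(x+y)$, the commutation $h_j(x)h_k(y)=h_k(y)h_j(x)$ for $|j-k|\ge2$, and the Yang--Baxter relation
$$h_j(x)h_{j+1}(x+y)h_j(y)=h_{j+1}(y)h_j(x+y)h_{j+1}(x).$$
Using these, I would show that $A_i(x)A_{i+1}(y)$ can be rewritten so that it is symmetric in $x,y$ up to a single factor of the form $h_i(\cdot)$. From this I would derive the operator identity
$$\partial_i\,\mathcal S(x)=\mathcal S(x)\,u_i,$$
where $\partial_i$ acts coefficientwise in the $x$-variables. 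Extracting the coefficient of $u_w$ on both sides then yields $\partial_i G_{ws_i}=G_w$ when $w(i)<w(i+1)$; the other coefficients vanish, consistent with the nilCoxeter rule $u_{w'}u_i=0$ when $\ell(w's_i)<\ell(w')$. Which side $u_i$ multiplies on is tied to the reading-order convention above, and I would fix the two choices together.

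The delicate step is the local two-row computation: pushing the Yang--Baxter moves through $A_i(x_i)A_{i+1}(x_{i+1})$ and checking that the remaining factors commute past. If this becomes unwieldy, my fallback is the Billey--Jockusch--Stanley route: prove the formula for compatible sequences using the same divided-difference argument, then biject compatible sequences with reduced pipe dreams via reading words.
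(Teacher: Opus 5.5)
Your proposal is correct. It takes a genuinely different route from the paper, which gives no proof and simply cites Bergeron--Billey. There the formula is a restatement of the Billey--Jockusch--Stanley theorem $\Sch_u=\sum_{(a,r)}x_{r_1}\cdots x_{r_\ell}$, summed over reduced words $a$ of $u$ and compatible sequences $r$. One reads a reduced pipe dream row by row from the top, each row right to left, and records a cross at $(i,j)$ as the letter $i+j-1$ with index $i$. This identifies $\pd(u)$ with such pairs.

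Your main line is the Fomin--Stanley nilCoxeter argument; your fallback is essentially the cited route. What your route buys is self-containedness: you verify the defining divided-difference recursion directly, for the whole generating function $\mathcal S(x)$ at once. The cited route is short only once the Billey--Jockusch--Stanley theorem is granted, and Fomin--Stanley reproved that theorem by precisely your argument. Both routes rest on the same dictionary between reduced pipe dreams and reduced reading words. Neither gives the ladder-move connectivity $\cL(D_\Bot(u))=\pd(u)$, which the paper takes separately from the same source.

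Three small points tighten your sketch.

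\textbf{Conventions.} No adjustment is needed. In the paper's running example, the reading word $5\,3\,7\,6\,4\,2\,5\,4\,3\,4$ multiplies out, under your rule $u_wu_{s_j}=u_{ws_j}$, to $14862357=u$ itself. So $\mathcal S=\sum_w G_w u_w$, and $u_i$ acts on the right.

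\textbf{Coefficient extraction.} The relation $\partial_i G_{ws_i}=G_w$ for $w(i)<w(i+1)$ comes from the coefficient of $u_{ws_i}$ in $\partial_i\mathcal S=\mathcal S u_i$, not of $u_w$. The coefficient of $u_w$ instead gives $\partial_i G_w=0$.

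\textbf{The delicate step.} It is cleanest as follows. Since $A_{i+1}(y)=A_i(y)\,h_i(-y)$, it suffices to show $A_i(x)A_i(y)=A_i(y)A_i(x)$. After that, $\partial_i$ only sees the factor $1-x_{i+1}u_i$ and returns $u_i$. The commutation itself follows by induction on $n-i$, using $A_i=A_{i+1}h_i$, additivity, locality, and the Yang--Baxter relation rearranged as $h_i(x)h_{i+1}(y)=h_{i+1}(y-x)\,h_i(y)\,h_{i+1}(x)\,h_i(x-y)$.
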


\smallskip

For \ts $a=(a_1,a_2,\ldots)$, denote by \ts $K_{u,a}=[x^a]\Sch_u$ \ts the
\defn{Schubert--Kostka number}.
Theorem~\ref{t:schub} gives a combinatorial interpretation for \ts $K_{u,a}$ \ts
as the number of pipe dreams with \ts $a_i$ \ts crosses in the $i$-th row for each $1\le i\le n$.
The following result gives a signed combinatorial interpretation for
Schubert coefficients:

\smallskip

\begin{thm}[{\rm Postnikov--Stanley \cite[Cor.~17.13]{PS09}}{}]\label{t:kostasum}
    For $u,v,w\in S_n$, we have
    \begin{equation}\label{eq:post-stan}
    c_{u,v}^w\. = \. c_{u,v,w_{\circ}w} \, = \, \sum_{\sigma \in S_n}\sign (\sigma)\sum_{(a,b,c)\ts\in\ts \Omega(\sigma)} K_{u,a}\. K_{v,b}\. K_{w_{\circ}w,c}\,,
    \end{equation}
    where \. $\Omega(\sigma):=\{(a,b,c)\in (\nn^n)^3\. : \.  a+b+c=\sigma\rho_n\}$  \. and \. $\rho_n:=(n-1,n-2,\ldots,1,0)$.
\end{thm}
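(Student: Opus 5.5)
This is the Postnikov--Stanley identity, which the paper quotes from \cite{PS09}. I would prove it directly from the pipe-dream formula (Theorem~\ref{t:schub}) together with the standard facts about Schubert polynomials modulo the coinvariant ideal.

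\textbf{Step 1: reduce to a triple product.} Work in $H = \zz[x_1,\ldots,x_n]/I_n$, where $I_n$ is generated by the positive-degree symmetric polynomials. Two classical facts are needed.
\begin{itemize}
\item The classes $\{\Sc_w : w\in S_n\}$ form a $\zz$-basis of $H$.
\item Under the top-degree pairing, this basis is dual to $\{\Sc_{w_\circ w}\}$: the coefficient of $\Sc_{w_\circ}$ in $\Sc_{w}\Sc_{w'}$ is $\delta_{w', w_\circ w}$ when $\ell(w)+\ell(w')=\binom n2$.
\end{itemize}
It follows that $c_{u,v}^w$ equals the coefficient of $\Sc_{w_\circ}$ in $\Sc_u\Sc_v\Sc_{w_\circ w}$ reduced in $H$. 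This quantity is $c_{u,v,w_\circ w}$, which gives the first equality.

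One point needs care: $\Sc_u\Sc_v$ may involve $\Sc_x$ with $x\notin S_n$. I would handle this by citing the standard fact that for $x\notin S_n$ the class $\Sc_x$ vanishes in $H$ (equivalently, the coinvariant algebra is the cohomology of the flag variety). The coefficients $c_{u,v}^w$ with $w\in S_n$ are then computed correctly in the quotient.

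\textbf{Step 2: a linear functional for the top coefficient.} Define a functional $\phi$ on degree-$\binom n2$ polynomials by
\[
\phi(f)=\sum_{\sigma\in S_n}\sign(\sigma)\,[x^{\sigma\rho_n}]f .
\]
I would show three things.
\begin{itemize}
\item $\phi$ vanishes on $I_n$ in top degree. Note that $\phi(f)$ is the coefficient of $x^{\rho_n}$ in the antisymmetrization $\sum_\sigma \sign(\sigma)\,\sigma f$, which equals $(\partial_{w_\circ} f)\cdot\Delta$, where $\Delta$ is the Vandermonde. Since $\partial_{w_\circ}$ is linear over symmetric functions and lowers degree by $\binom n2$, it kills $I_n$ in top degree.
\item $\phi(\Sc_{w_\circ})=1$, because $\Sc_{w_\circ}=x^{\rho_n}$ and no other $\sigma\rho_n$ equals $\rho_n$.
\item $\phi(\Sc_x)=0$ for every $x\ne w_\circ$ of length $\binom n2$. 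I expect this to be the main obstacle, since such $x$ lie outside $S_n$. It follows from the first point combined with the vanishing of $\Sc_x$ in $H$ quoted in Step 1. Alternatively, it follows directly from $\partial_{w_\circ}\Sc_x=\Sc_{x w_\circ}$ when $\ell$ drops by exactly $\binom n2$, and $0$ otherwise; in degree zero this gives $\delta_{x,w_\circ}$.
\end{itemize}

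\textbf{Step 3: expand.} Applying $\phi$ to $\Sc_u\Sc_v\Sc_{w_\circ w}$ gives $c_{u,v,w_\circ w}$. Now expand each factor by Theorem~\ref{t:schub} as $\sum_a K_{u,a}x^a$, and similarly for the other two factors. The coefficient of $x^{\sigma\rho_n}$ in the product is then
\[
\sum_{a+b+c=\sigma\rho_n} K_{u,a}\,K_{v,b}\,K_{w_\circ w,c}.
\]

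\textbf{Step 4: restrict to $\nn^n$.} Pipe dreams for a permutation in $S_n$ lie in $\Delta_n$, so all exponent vectors are supported on the first $n$ coordinates. The sum over $\Omega(\sigma)$ is therefore exactly the sum in Step 3, and summing over $\sigma$ with signs gives \eqref{eq:post-stan}.
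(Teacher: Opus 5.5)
The paper gives no proof of this statement. It quotes the identity from Postnikov--Stanley and uses it as a black box in the proof of Theorem~\ref{t:main}.

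Your argument is a correct, self-contained derivation along standard lines. You identify $c_{u,v}^w$ with the top-degree pairing $\partial_{w_\circ}(\Sc_u\Sc_v\Sc_{w_\circ w})$ on the coinvariant ring. You then evaluate $\partial_{w_\circ}$ via the antisymmetrization identity $\sum_{\sigma}\sign(\sigma)\,\sigma f=\Delta\cdot\partial_{w_\circ}f$. Hence the constant $\partial_{w_\circ}f$ is the coefficient of $x^{\rho_n}$ in $\Delta\cdot\partial_{w_\circ}f$, i.e.\ the signed sum of the coefficients of $x^{\sigma\rho_n}$ in $f$.

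Compared with the bare citation, this makes clear why exactly the exponent vectors $\sigma\rho_n$ appear. It also shows that Theorem~\ref{t:schub} is used only to interpret the monomial coefficients $K_{u,a}$. The cost is reliance on classical facts about $H=\zz[x_1,\ldots,x_n]/I_n$: the Schubert basis, the duality $\Sc_w\leftrightarrow\Sc_{w_\circ w}$, and $\Sc_x\in I_n$ for $x\notin S_n$. Quoting these is reasonable at the paper's level of rigor.

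A few small tidy-ups:
\begin{itemize}
\item The third item of your Step~2 is neither needed nor an obstacle. In Step~3 you apply $\phi$ only to $\Sc_u\Sc_v\Sc_{w_\circ w}\in\zz[x_1,\ldots,x_{n-1}]$. By Step~1 this is congruent to $c_{u,v}^w\,\Sc_{w_\circ}$ modulo $I_n$, so the first two items already give $\phi(\Sc_u\Sc_v\Sc_{w_\circ w})=c_{u,v}^w$.
\item The vanishing fact in Step~1 only makes sense for $x\notin S_n$ with $\Sc_x\in\zz[x_1,\ldots,x_n]$, i.e.\ $\Des(x)\subseteq[n]$, and it is true there. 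That suffices, since only such $x$ occur: $\Sc_u\Sc_v\in\zz[x_1,\ldots,x_{n-1}]$, and $\{\Sc_x:\Des(x)\subseteq[n-1]\}$ is a $\zz$-basis of that ring.
\item Since $\phi$ is set up in degree $\binom n2$, handle the case $\ell(u)+\ell(v)\neq\ell(w)$ separately. There both sides vanish; on the right, every triple in $\Omega(\sigma)$ has $|a|+|b|+|c|=\binom n2$, whereas $K_{u,a}K_{v,b}K_{w_\circ w,c}\neq 0$ forces $|a|+|b|+|c|=\ell(u)+\ell(v)+\ell(w_\circ w)\neq\binom n2$.
\end{itemize}
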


\smallskip

\subsection{Ladder moves}\label{ss:ladder} For $u\in S_n$ and $D\in \pd(u)$, a \defn{ladder move} (defined in \cite{BB93}) $L_{i,j}$ is a transformation of the type shown going left to right in Figure~\ref{fig:laddermove}.
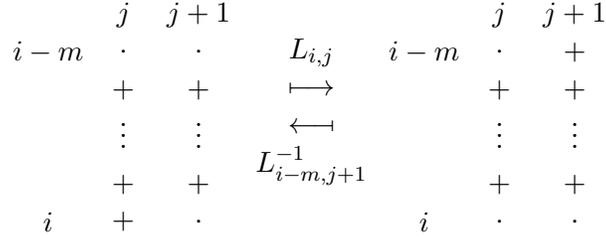
\begin{figure}[hbt]
    \begin{tikzpicture}[scale=0.5]

\node at (0,1) {$j$};
\node at (2,1) {$j+1$};
\node at (-2,0) {$i-m$};
\node at (-2,-4.5) {$i$};
\node at (0,0) {$\cdot$};
\node at (0,-1) {$+$};
\node at (0,-2) {$\vdots$};
\node at (0,-3.5) {$+$};
\node at (0,-4.5) {$+$};
\node at (2,0) {$\cdot$};
\node at (2,-1) {$+$};
\node at (2,-2) {$\vdots$};
\node at (2,-3.5) {$+$};
\node at (2,-4.5) {$\cdot$};

\node at (5,0) {$L_{i,j}$};
\node at (5,-1) {$\longmapsto$};

\node at (5,-3) {$L_{i-m,j+1}^{-1}$};
\node at (5,-2) {$\longmapsfrom$};

\node at (10,1) {$j$};
\node at (12,1) {$j+1$};
\node at (8,0) {$i-m$};
\node at (8,-4.5) {$i$};
\node at (10,0) {$\cdot$};
\node at (10,-1) {$+$};
\node at (10,-2) {$\vdots$};
\node at (10,-3.5) {$+$};
\node at (10,-4.5) {$\cdot$};
\node at (12,0) {$+$};
\node at (12,-1) {$+$};
\node at (12,-2) {$\vdots$};
\node at (12,-3.5) {$+$};
\node at (12,-4.5) {$\cdot$};
\end{tikzpicture}

\caption{\label{fig:laddermove}Ladder move}
\end{figure}

Formally, let \.  $L_{i,j}(D):=D+(i-m, j+1)-(i,j)$, where the following conditions are satisfied
\begin{itemize}
    \item [(a)] \ \  $(i,j)\in D$ \ts and \ts $(i,j+1)\notin D$,
    \item [(b)] \ \ $(i-m,j), (i-m,j+1)\notin D$ \ts for some \ts $0<m<i$, \ts and
    \item [(c)] \ \ $(i-k,j),(i-k,j+1)\in D$ \ts for all \ts $1\le k <m$.
\end{itemize}

Similarly, an \defn{inverse ladder move} \. $L_{i-m,j+1}^{-1}$ \. is a transformation of the type shown going right to left in Figure~\ref{fig:laddermove}. We think of these transformations as moving exactly one cross. Note that we index the (inverse) ladder move by the row and column that this cross starts in. Thus, we can see that \ts $L_{i-m,j+1}^{-1}=(L_{i,j})^{-1}$, so \ts $L_{i-m,j+1}^{-1}\circ L_{i,j}=\id$.
\vspace{4px}

For $u\in S_n$, let $D_\Bot(u)\in \pd(u)$ be the pipe dream for $u$ whose crosses are all left-aligned and with weight $\wtx(D_\Bot(u))=\code(u)$, as in Example \ref{ex:dbot}. We call $D_\Bot(u)$ the \defn{bottom pipe dream for~$u$}. It is well-known that $D_\Bot(u)$ is indeed a pipe dream for~$u$ and has the largest weight among pipe dreams for $u$ in the reverse lexicographic order, see~\cite{BB93}.

\begin{ex}\label{ex:dbot}
    For $u=14862357\in S_8\ts$, we have $\code(u)=(0,2,5,3)$.  In this case,
    the bottom pipe dream \ts $D_\Bot(u)$ \ts for~$u$ is shown in Figure~\ref{fig:dbot}.
        \begin{figure}[hbt]
    \begin{tikzpicture}[scale=0.5]
\node at (0,1) {$1$};
\node at (1,1) {$2$};
\node at (2,1) {$3$};
\node at (3,1) {$4$};
\node at (4,1) {$5$};
\node at (5,1) {$6$};
\node at (6,1) {$7$};
\node at (7,1) {$8$};

\node at (-1,0) {$1$};
\node at (-1,-1) {$4$};
\node at (-1,-2) {$8$};
\node at (-1,-3) {$6$};
\node at (-1,-4) {$2$};
\node at (-1,-5) {$3$};
\node at (-1,-6) {$5$};
\node at (-1,-7) {$7$};

\node at (0,0) {$\cdot$};
\node at (1,0) {$\cdot$};
\node at (2,0) {$\cdot$};
\node at (3,0) {$\cdot$};
\node at (4,0) {$\cdot$};
\node at (5,0) {$\cdot$};
\node at (6,0) {$\cdot$};
\node at (7,0) {$\cdot$};

\node at (0,-1) {$+$};
\node at (1,-1) {$+$};
\node at (2,-1) {$\cdot$};
\node at (3,-1) {$\cdot$};
\node at (4,-1) {$\cdot$};
\node at (5,-1) {$\cdot$};
\node at (6,-1) {$\cdot$};

\node at (0,-2) {$+$};
\node at (1,-2) {$+$};
\node at (2,-2) {$+$};
\node at (3,-2) {$+$};
\node at (4,-2) {$+$};
\node at (5,-2) {$\cdot$};

\node at (0,-3) {$+$};
\node at (1,-3) {$+$};
\node at (2,-3) {$+$};
\node at (3,-3) {$\cdot$};
\node at (4,-3) {$\cdot$};

\node at (0,-4) {$\cdot$};
\node at (1,-4) {$\cdot$};
\node at (2,-4) {$\cdot$};
\node at (3,-4) {$\cdot$};

\node at (0,-5) {$\cdot$};
\node at (1,-5) {$\cdot$};
\node at (2,-5) {$\cdot$};

\node at (0,-6) {$\cdot$};
\node at (1,-6) {$\cdot$};

\node at (0,-7) {$\cdot$};

\end{tikzpicture}
\caption{\label{fig:dbot} Bottom pipe dream $D_\Bot(u)$ for $u=14862357\in S_8\ts$.}

\end{figure}
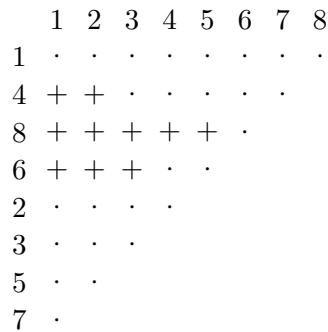
\end{ex}

For a pipe dream $D\in \pd(u)$, denote by $\cL(D)\subseteq \pd(u)$ the set of pipe dreams for $u$ that can be obtained from $D$ by a sequence of ladder moves. We will need the following key combinatorial observation:

\smallskip

\begin{prop}[{\rm Bergeron--Billey \cite[Thm.~3.7(c)]{BB93}}{}]\label{t:schubpd}
    Let \. $u\in S_n$. Then \. $\cL(D_\Bot(u))=\pd(u)$.
\end{prop}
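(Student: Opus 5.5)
The plan is to reverse the direction: start from an arbitrary $D\in\pd(u)$ and apply \emph{inverse} ladder moves until none is available, then show that the only pipe dream with no available inverse ladder move is $D_\Bot(u)$. Since $L^{-1}_{i-m,j+1}=(L_{i,j})^{-1}$, reading this chain backwards writes $D$ as the image of $D_\Bot(u)$ under a sequence of ladder moves, so $\pd(u)\subseteq\cL(D_\Bot(u))$. The reverse inclusion $\cL(D_\Bot(u))\subseteq\pd(u)$ is the statement that ladder moves preserve $\pd(u)$, which I would check first.

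\emph{Step 1 (ladder moves preserve $\pd(u)$).} Look at the two columns $j,j+1$ between rows $i-m$ and $i$. In rows $i-m+1,\dots,i-1$ both tiles are crosses, so there the two pipes passing through column $j$ and column $j+1$ each go straight down. Trace the four pipe ends on the boundary of this $2\times(m+1)$ block before and after the move $L_{i,j}$. The block realizes the same connection pattern in both cases, so the pipe dream has the same permutation. It also has the same number of crosses, namely $\ell(u)$, so reducedness is preserved. Equivalently, in the language of reduced words the move is a sequence of commutations and one braid relation. The same local check applies to inverse ladder moves.

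\emph{Step 2 (termination).} An inverse ladder move $L^{-1}_{i-m,j+1}$ moves one cross from $(i-m,j+1)$ to $(i,j)$. Hence the total column sum $\sum_{(r,c)\in D}c$ drops by exactly~$1$. This sum is a nonnegative integer, so every sequence of inverse ladder moves terminates.

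\emph{Step 3 (the key lemma, and main obstacle).} If $D\in\pd(u)$ is not \emph{left-justified}, meaning some row has a cross at $(r,c+1)$ and a dot at $(r,c)$, then some inverse ladder move applies to~$D$. I would choose the column $j$ and then the row $i-m$ extremally: take the smallest $j$ for which a pattern (dot at $(i-m,j)$, cross at $(i-m,j+1)$) occurs, and among those the lowest such row. Then walk down the columns $j,j+1$ while both tiles are crosses. The walk must stop at a row $i$ with $(i,j+1)$ a dot, and we need $(i,j)$ a dot as well, with $(i,j)\in\Delta_n$.

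Three configurations must be ruled out at the row $i$ where the double crosses end. A row with a dot at $j$ and a cross at $j+1$ contradicts the extremal choice of row. A row with a cross at $j$ and a dot at $j+1$ gives two pipes crossing twice, contradicting reducedness; I would trace the pipes through the column-$j+1$ cross at $(i-m,j+1)$ and the column-$j$ cross at row $i$ to see this. Running off the staircase also gives a contradiction, since crosses near the antidiagonal force a double crossing. This case analysis with pipe tracing is where I expect the real work to be.

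\emph{Step 4 (left-justified pipe dreams).} A left-justified reduced pipe dream with row counts $(a_1,a_2,\dots)$ is determined by those counts. Reading its pipes gives the permutation $w$ with $\code(w)=(a_1,a_2,\dots)$, because row $r$ contributes exactly the inversions $(r,\cdot)$ counted by $c_r$; this is the standard fact already quoted for $D_\Bot$. Since $\code$ is a bijection, the terminal pipe dream, which lies in $\pd(u)$ by Step~1, must equal $D_\Bot(u)$. Combining Steps~1--4 gives $\cL(D_\Bot(u))=\pd(u)$.
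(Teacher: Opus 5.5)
Your proof is correct, and it is essentially the standard Bergeron--Billey argument that the paper cites rather than reproves. Your Step~3 is the same double-crossing argument as the paper's Lemma~\ref{l:inverseladder}, which picks the lowest row and rightmost pattern and moves a whole patch, whereas you pick the leftmost column and lowest row and move one cross; the algorithm in Lemma~\ref{l:pdalgo} then descends by inverse ladder moves to a left-aligned diagram identified with $D_\Bot(u)$, as your Steps~2 and~4 do (one harmless slip: in reduced-word terms a ladder of height $m$ takes $m-1$ braid relations, not one, but your Step~1 pipe-tracing and crossing-count argument does not rely on that aside).
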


\medskip

\section{From pipe dreams to ladder sequences}\label{s:defs}

In this section, we introduce ladder indices and the corresponding ladder order. Then, we define ladder sequences.  We will show how every pipe dream can be encoded by a certain ladder sequence whose entries count the sizes of patches of crosses in the pipe dream. Finally, we introduce a system of inequalities on the set of ladder sequences which (as we will prove in $\S$\ref{s:bijection}) characterize the ladder sequences coming from pipe dreams.

\subsection{Ladder sequences}\label{ss:ladderseq}

Let $\mu\in \nn_{\ge 1}$. Let
\[\cL_\mu:=\{(i,k_1,k_2,\ldots, k_\ell)\in \nn_{\ge 1}^{\ell+1}\. : \.  \ell\in \nn_{\ge 1}, \; i+k_1+\cdots +k_\ell\le \mu\}.\]
We call $\cL_\mu$ the set of \defn{ladder indices}. To simplify notation, let
\[\bi:=(i,k_1,\ldots, k_\ell), \quad \bi':=(i',k_1',\ldots, k_{\ell'}'),\quad \bi'':=(i'', k_1'',\ldots, k_{\ell''}'').\]
For $\bi\in \cL_\mu$, we also define
\[|\bi|:=i+k_1+\cdots +k_\ell.\]

We define the \defn{ladder order} \ts on \ts $\cL_\mu$ \ts as follows.
We say \. $(i,k_1,\ldots, k_\ell)\prec (i', k_1',\ldots, k_{\ell'}')$ \. if
\begin{itemize}
    \item [(a)] $i>i'$, or
    \item [(b)] $i=i'$, and there exists \. $j\in \{0,\ldots, \min(\ell,\ell')-1\}$ \. such that \. $k_1=k_1', k_2=k_2',\ldots, k_j=k_j'$ and $k_{j+1}<k_{j+1}'$, or
    \item [(c)] $i=i'$, $\ell>\ell'$, and $k_1=k_1', k_2=k_2', \ldots, k_\ell=k_\ell'\ts$.
\end{itemize}
We note that this is a total order on $\cL_\mu$.

\begin{ex}\label{ex:ladderorder}
    For $\mu=4$, the ordering on $\cL_\mu$ is:
    \small
    \[(3,1)\prec (2,1,1)\prec (2,1)\prec (2,2)\prec (1,1,1,1)\prec (1,1,1)\prec (1,1,2)\prec (1,1)\prec (1,2,1)\prec (1,2)\prec (1,3).\]
    \small
\end{ex}

For $\mu\in \nn_{\ge 1}$, let $\CS_\mu$ be the set of $\nn$-valued vectors indexed by $\cL_\mu$. We call $\CS_\mu$ the set of \defn{ladder sequences}.

\begin{ex}\label{ex:laddersequence}
    Let $\mu=4$. Then, the vector
    \[x:=(0,0,2,1,0,0,1,1,0,0,0)\]
    is an element of $\CS_4$ with
    \[x_{(2,1)}=2, \quad x_{(2,2)}=1, \quad x_{(1,1,2)}=1, \quad x_{(1,1)}=1,
    \quad \text{and} \ \  x_\bi=0 \ \ \text{otherwise.} \]
\end{ex}

Given \ts $u\in S_n$ \ts with \. $\mu:=\max\{i\. : \.  c_i(u)>0\}$, the \defn{$u$-weight} \ts
of a ladder sequence \ts $x\in \CS_\mu$ \ts is the vector \ts $\wtx_u(x):=(\cw_1,\cw_2,\ldots)$, where \ts $\cw_j$ \ts is the number
\begin{equation}\label{eq:uwtdef}
    \cw_j\, := \, c_j(u) \. - \. \sum_{\bi\in \cL_\mu, \ts|\bi|\ts =\ts j}x_\bi \. + \. \sum_{\bi\in \cL_\mu,\ts i\ts=\ts j}x_\bi\..
\end{equation}

\smallskip

\subsection{Encoding pipe dreams}\label{ss:pdalgo}
We can now present an algorithm that encodes pipe dreams as ladder sequences.

\smallskip

\begin{lemma}\label{l:inverseladder}
    Let $u\in S_n$ and let $D$ be a pipe dream for $u$. Let $i$ be maximal such that there exists $j$ with $(i,j)\notin D$ and $(i,j+1)\in D$. Let $j$ be maximal with $(i,j)\notin D$ and $(i,j+1)\in D$. Let $k> j$ be minimal such that $(i,k+1)\notin D$. Then, one can perform the inverse ladder moves \. $L_{i,j+1}^{-1}, L_{i,j+2}^{-1},\ldots, L_{i,k}^{-1}$ \ts in this order.
\end{lemma}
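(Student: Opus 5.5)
The plan is to prove a general one-step statement and then apply it $k-j$ times by induction. The one-step statement is: \emph{if $D$ is a pipe dream for $u$ and $(i,c)$ satisfies $(i,c-1)\notin D$, $(i,c)\in D$, and every row $i'>i$ of $D$ is left-justified (its crosses are exactly $(i',1),\ldots,(i',\ell_{i'})$ for some $\ell_{i'}\ge 0$), then $L^{-1}_{i,c}$ can be performed, and the resulting pipe dream again has all rows below $i$ left-justified.}

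First I would translate the hypothesis of Lemma~\ref{l:inverseladder}. Maximality of $i$ means no row $i'>i$ contains a dot immediately followed by a cross, which is exactly left-justification of all rows below~$i$. In row $i$ itself, the choice of $j$ and $k$ gives $(i,j)\notin D$, crosses at $(i,j+1),\ldots,(i,k)$, and $(i,k+1)\notin D$. So the first move $L^{-1}_{i,j+1}$ falls under the one-step statement with $c=j+1$.

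For the one-step statement, recall from Figure~\ref{fig:laddermove} what $L^{-1}_{i,c}$ requires. We need an $m\ge 1$ such that rows $i+1,\ldots,i+m-1$ contain crosses in both columns $c-1$ and $c$, and row $i+m$ contains dots in both columns $c-1,c$. The cross then moves to $(i+m,c-1)$. Since the rows below are left-justified, I scan downward and let $i+m$ be the first row with $\ell_{i+m}<c$.
\begin{itemize}
\item \emph{Row $i+m$ exists inside $\Delta_n$.} A reduced pipe dream has at most $n-i'$ crosses in row $i'$, so $\ell_{i'}\le n-i'$. Hence $\ell_{i'}<c$ once $i'\ge n-c+1$. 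Also $(i+m,c)\in\Delta_n$ needs $i+m+c\le n+1$, which I would check at the first such index.
\item \emph{The main obstacle is excluding $\ell_{i+m}=c-1$.} In that case $(i+m,c-1)$ is a cross and $(i+m,c)$ a dot, so the ladder would be blocked. I would rule it out by reducedness, following the pipes. Take the pipe entering the elbow at $(i,c-1)$ and the pipe crossing at $(i,c)$. Down the $2\times(m-1)$ block of crosses in columns $c-1,c$ they travel straight in parallel. The configuration cross at $(i+m,c-1)$, elbow at $(i+m,c)$ would force the same two pipes to cross both at $(i,c)$ and at $(i+m,c-1)$, contradicting that $D$ is reduced. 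This is the standard Bergeron--Billey local argument, and I expect the careful pipe tracing to be the only delicate point.
\end{itemize}

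After the move, row $i+m$ has $\ell_{i+m}=c-2$ before and $c-1$ after, so it is still left-justified, and no other row below $i$ changes. Thus left-justification below $i$ is preserved. In row $i$, the cross at $(i,c)$ becomes a dot while $(i,c+1),\ldots,(i,k)$ are untouched. Therefore, for $c<k$, the new pipe dream has $(i,c)\notin D$ and $(i,c+1)\in D$, which is the hypothesis for the next move with $c+1$.

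By induction on $c=j+1,\ldots,k$, the moves $L^{-1}_{i,j+1},\ldots,L^{-1}_{i,k}$ can all be performed in this order. Each intermediate configuration is a reduced pipe dream for $u$, since ladder moves preserve $\pd(u)$ \cite{BB93}, so reducedness is available at every step of the obstacle argument.
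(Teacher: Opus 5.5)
There is a genuine gap in your inductive step, at the sentence ``After the move, row $i+m$ has $\ell_{i+m}=c-2$ before and $c-1$ after.'' Your argument only gives $\ell_{i+m}<c$ and $\ell_{i+m}\neq c-1$, that is, $\ell_{i+m}\le c-2$. Nothing forces equality, and in general equality fails.

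For example, $D=\{(1,4),(1,5)\}$ is a reduced pipe dream for $u=1236457\in S_7$, and the lemma gives $i=1$, $j=3$, $k=5$. Your first move $L^{-1}_{1,4}$ sends the cross to $(2,3)$ although row~$2$ was empty. The result $\{(1,5),(2,3)\}$ no longer has left-justified rows below row~$1$. The same thing happens in the paper's running example: $L^{-1}_{1,5}$ in Figure~\ref{fig:pdalgo} puts a cross at $(2,4)$ next to the lone cross $(2,1)$. This is exactly why the proof of Lemma~\ref{l:pdalgo} warns that moved crosses need not be left-aligned.

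So the hypothesis of your one-step statement fails from the second move on, and the numbers $\ell_{i'}$ driving your downward scan are no longer meaningful. The induction over $c=j+1,\ldots,k$ breaks precisely when it is needed, namely when $k-j\ge 2$.

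What is missing is control of all columns $j,\ldots,k$ at once, in the original $D$. Let $r>i$ be the first row with a dot in some column among $j,\ldots,k$, and let $m$ be the leftmost such column. Suppose $m>j$. Then your double-crossing argument applies with column $m$:
\begin{itemize}
\item the pipe running east along row $r$ turns north at the elbow $(r,m)$ and goes straight up column $m$ through $(i,m)$;
\item the pipe coming up column $j$ passes $(r,j)$, turns east at the elbow $(i,j)$, and runs through $(i,j+1),\ldots,(i,k)$.
\end{itemize}
These two pipes cross at $(r,j)$ and again at $(i,m)$, contradicting reducedness. Hence $m=j$. By left-justification, row $r$ has dots in all columns $j,\ldots,k+1$, while rows $i+1,\ldots,r-1$ have crosses in all columns $j,\ldots,k$.

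The moves then take crosses out of row $i$ and deposit them successively at $(r,j),(r,j+1),\ldots,(r,k-1)$. The conditions for each $L^{-1}_{i,c}$ can be read directly off this fixed picture. You need no invariant on the intermediate diagrams and no appeal to their reducedness; this is the paper's argument.

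Your stepwise scheme can also be repaired with a corrected invariant at step $c$: rows $i+1,\ldots,r-1$ are untouched and contain at least $c$ crosses, and row $r$ equals its original left-justified part plus crosses in columns $j,\ldots,c-2$. But proving the ``at least $c$ crosses'' part is again the double-crossing argument above.
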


\begin{proof}
    First, note that by the choice of $i$, the crosses in all rows below $i$ are left-aligned. Now, let $\ell> i$ be minimal such that there is some $m\in \{j,j+1,\ldots k\}$ with $(\ell,m)\notin D$. Such an $\ell$ exists because $D$ is finite. Let $m\in \{j,j+1,\ldots, k\}$ be minimal with $(\ell,m)\notin D$. Then, if $m>j$, we know that $D$ contains the following the diagram:
    \begin{figure}[H]
    \begin{tikzpicture}[scale=0.5]

\node at (-2,0) {$i$};
\node at (-2,-1) {$i+1$};
\node at (-2,-2) {$\vdots$};
\node at (-2,-3.5) {$\ell-1$};
\node at (-2,-4.5) {$\ell$};

\node at (0,1) {$j$};
\node at (2,1) {$j+1$};
\node at (4,1) {$j+2$};
\node at (6,1) {$\cdots$};
\node at (8,1) {$m-1$};
\node at (10,1) {$m$};
\node at (12,1) {$\cdots$};
\node at (14,1) {$k$};
\node at (16,1) {$k+1$};

\node at (0,0) {$\cdot$};
\node at (2,0) {$+$};
\node at (4,0) {$+$};
\node at (6,0) {$\cdots$};
\node at (8,0) {$+$};
\node at (10,0) {$+$};
\node at (12,0) {$\cdots$};
\node at (14,0) {$+$};
\node at (16,0) {$\cdot$};

\node at (0,-1) {$+$};
\node at (2,-1) {$+$};
\node at (4,-1) {$+$};
\node at (6,-1) {$\cdots$};
\node at (8,-1) {$+$};
\node at (10,-1) {$+$};
\node at (12,-1) {$\cdots$};
\node at (14,-1) {$+$};
\node at (16,-1) {$*$};

\node at (0,-2) {$\vdots$};
\node at (2,-2) {$\vdots$};
\node at (4,-2) {$\vdots$};
\node at (6,-2) {$\vdots$};
\node at (8,-2){$\vdots$};
\node at (10,-2) {$\vdots$};
\node at (12,-2) {$\vdots$};
\node at (14,-2) {$\vdots$};
\node at (16,-2) {$\vdots$};

\node at (0,-3.5) {$+$};
\node at (2,-3.5) {$+$};
\node at (4,-3.5) {$+$};
\node at (6,-3.5) {$\cdots$};
\node at (8,-3.5) {$+$};
\node at (10,-3.5) {$+$};
\node at (12,-3.5) {$\cdots$};
\node at (14,-3.5) {$+$};
\node at (16,-3.5) {$*$};

\node at (0,-4.5) {$+$};
\node at (2,-4.5) {$+$};
\node at (4,-4.5) {$+$};
\node at (6,-4.5) {$\cdots$};
\node at (8,-4.5) {$+$};
\node at (10,-4.5) {$\cdot$};
\node at (12,-4.5) {$\cdots$};
\node at (14,-4.5) {$\cdot$};
\node at (16,-4.5) {$\cdot$};
\end{tikzpicture}
\end{figure}

The $*$'s represent either crosses or dots.
This cannot be a valid pipe dream since the pipe entering row $\ell$
from the left crosses the pipe entering column $j$ from below twice,
once at $(\ell,j)$ and once at $(i,m)$.
Hence, we must have $m=j$ and the diagram instead looks like:
\begin{figure}[H]
    \begin{tikzpicture}[scale=0.5]

\node at (-2,0) {$i$};
\node at (-2,-1) {$i+1$};
\node at (-2,-2) {$\vdots$};
\node at (-2,-3.5) {$\ell-1$};
\node at (-2,-4.5) {$\ell$};

\node at (0,1) {$j$};
\node at (2,1) {$j+1$};
\node at (4,1) {$j+2$};
\node at (6,1) {$\cdots$};
\node at (8,1) {$k$};
\node at (10,1) {$k+1$};

\node at (0,0) {$\cdot$};
\node at (2,0) {$+$};
\node at (4,0) {$+$};
\node at (6,0) {$\cdots$};
\node at (8,0) {$+$};
\node at (10,0) {$\cdot$};

\node at (0,-1) {$+$};
\node at (2,-1) {$+$};
\node at (4,-1) {$+$};
\node at (6,-1) {$\cdots$};
\node at (8,-1) {$+$};
\node at (10,-1) {$*$};

\node at (0,-2) {$\vdots$};
\node at (2,-2) {$\vdots$};
\node at (4,-2) {$\vdots$};
\node at (6,-2) {$\vdots$};
\node at (8,-2) {$\vdots$};
\node at (10,-2) {$\vdots$};

\node at (0,-3.5) {$+$};
\node at (2,-3.5) {$+$};
\node at (4,-3.5) {$+$};
\node at (6,-3.5) {$\cdots$};
\node at (8,-3.5) {$+$};
\node at (10,-3.5) {$*$};

\node at (0,-4.5) {$\cdot$};
\node at (2,-4.5) {$\cdot$};
\node at (4,-4.5) {$\cdot$};
\node at (6,-4.5) {$\cdots$};
\node at (8,-4.5) {$\cdot$};
\node at (10,-4.5) {$\cdot$};
\end{tikzpicture}
\end{figure}
It is clear from definitions that we can perform the inverse ladder move $L_{i,j+1}^{-1}$ and then we may perform $L_{i,j+2}^{-1}$ and so on until $L_{i,k}^{-1}$.
\end{proof}

\smallskip

\begin{lemma}\label{l:pdalgo}
    Let $u\in S_n$, let $\mu:=\max\{i\. : \.  c_i(u)>0\}$.  Then there is an
    injection \ts $\varphi_u:\pd(u)\to \CS_\mu$ \ts that is weight-preserving:
 \ts $\wtx_u(\varphi_u(D))=\wtx(D)$ \ts for all \ts $D\in \pd(u)$.
\end{lemma}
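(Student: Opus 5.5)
The plan is to construct $\varphi_u$ by running the reduction of Lemma~\ref{l:inverseladder} repeatedly, so that every $D\in\pd(u)$ is driven back to $D_\Bot(u)$. The ladder sequence $x=\varphi_u(D)$ will record, for each ladder index $\bi=(i,k_1,\ldots,k_\ell)$, how many crosses travelled along a path encoded by $\bi$.

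\emph{Step 1: the reduction.} Given $D\neq D_\Bot(u)$, choose $i,j,k$ as in Lemma~\ref{l:inverseladder} and perform the block of inverse ladder moves $L^{-1}_{i,j+1},\ldots,L^{-1}_{i,k}$. This moves the patch of $k-j$ crosses in row $i$ down to some lower row $i+k_1$, one column to the left. Repeat the procedure. Reverse-lex weight strictly increases at each step, and the crosses in rows below the current row stay left-aligned. Hence the process terminates, and since each move preserves $u$, it terminates at the unique left-aligned pipe dream with the right permutation, namely $D_\Bot(u)$. By Proposition~\ref{t:schubpd}, this is consistent: every pipe dream lies in $\cL(D_\Bot(u))$.

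\emph{Step 2: recording the sequence.} Track each patch from the row $i$ where it originally sits in $D$. It descends through rows $i+k_1$, then $i+k_1+k_2$, and so on, until it merges into the left-aligned bottom configuration at row $|\bi|$. For the corresponding ladder index $\bi=(i,k_1,\ldots,k_\ell)$, set $x_\bi$ to be the size of the patch making that journey. Summing over all patches gives $x$.

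Since every cross of $D_\Bot(u)$ sits in rows $\le\mu$, we get $|\bi|\le\mu$, so $x\in\CS_\mu$. The weight identity \eqref{eq:uwtdef} then follows by bookkeeping. Row $j$ of $D$ has $c_j(u)$ crosses, minus those that finish their journey in row $j$ (where $|\bi|=j$), plus those that start in row $j$ (where $i=j$).

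\emph{Step 3: injectivity.} This is the main obstacle. I would show that $D$ can be reconstructed from $x$ by running the process forward from $D_\Bot(u)$. The ladder order $\prec$ is designed to match the order in which patches are peeled off in Step 1: larger starting row first, then the lexicographic comparison of the path. So processing indices in $\prec$-order, one reapplies ladder moves $L_{i,j}$ to lift the recorded number of crosses along the recorded path.

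The difficulty is showing that this forward step is deterministic. At each stage, the columns used are forced as the rightmost crosses of the appropriate left-aligned-below-row configuration. One must also check that a patch's path is genuinely determined by the row jumps $k_1,k_2,\ldots$, so that two different pipe dreams cannot yield the same $x$. I would handle this by induction on the number of reduction steps, using the structure established in the proof of Lemma~\ref{l:inverseladder}: a full block of crosses sits in rows $i+1,\ldots,\ell-1$ over columns $j,\ldots,k$, and row $\ell$ is empty there. That structure shows each step of the decoding is uniquely determined by the current diagram together with $x$.
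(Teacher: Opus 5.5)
Your proposal is correct and follows the paper's proof essentially step for step. Like the paper, you encode by repeatedly applying the block moves of Lemma~\ref{l:inverseladder} and record each patch's size under its journey $(i,k_1,\ldots,k_\ell)$. You get the weight identity from the same start/finish bookkeeping, and you prove injectivity by rebuilding $D$ from $D_\Bot(u)$, lifting the rightmost $x_\bi$ crosses of row $|\bi|$.

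Two small points to tidy up. First, the decoding must run through the indices in \emph{decreasing} ladder order, since the paper ``reads $x$ backwards''. Second, no actual ``summing'' of patches ever occurs. If two patches from the same row had the same journey, the earlier (rightmost) one would leave their common final row too long for the later one to land in it, because row lengths below the active row only grow.
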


\begin{proof}
    Let $D\in \pd(u)$. The following algorithm will repeatedly apply inverse ladder moves to $D$ until we get the bottom pipe dream $D_b:=D_\Bot(u)$ for $u$.
    \begin{itemize}
        \item [(1)] Choose $i$ maximal such that there exists $j$ with $(i,j)\notin D$ and $(i,j+1)\in D$. If no such $i$ exists, all of the crosses in $D$ are left aligned, $D=D_b$, and we are done.
        \item [(2)] Choose $j$ maximal such that $(i,j)\notin D$ and $(i,j+1)\in D$. Let $k> j$ be minimal such that $(i,k+1)\notin D$.
        \item [(3)] Perform the inverse ladder moves $L_{i,j+1}^{-1}, L_{i,j+2}^{-1},\ldots ,L_{i,k}^{-1}$ in succession to obtain a new pipe dream $D'$ for $u$.
        \item [(4)] Repeat (1) through (3) on $D':=L_{i,k}^{-1}\circ \cdots \circ L_{i,j+2}^{-1}\circ L_{i,j+1}^{-1}(D)$.
    \end{itemize}

    Lemma~\ref{l:inverseladder} ensures that we can perform the necessary inverse ladder moves in step (3). Note that after performing the inverse ladder moves in step (3), if the crosses that we moved down are not left-aligned, they will necessarily be moved down again the next time we perform step (3). In addition, note that the proof of Lemma~\ref{l:inverseladder} shows that the inverse ladder moves $L_{i,j+1}^{-1},\ldots, L_{i,k}^{-1}$ each move a cross down the same number of rows.

    Thus, we can think of this algorithm as picking the bottom-most, right-most patch of non-left-aligned crosses and repeatedly applying inverse ladder moves to each cross in the patch until it is left-aligned and then repeating this process on the next patch.

    Then, we set $\varphi_u(D):=x$ where $x_{(i,k_1,\ldots, k_\ell)}$ is the number of crosses in the (necessarily unique) patch that starts in row $i$ of $D$ and is moved down first $k_1$ rows, then $k_2$ rows and so on until it is left-aligned after performing $\ell$ inverse ladder moves on each cross in the patch. Note that this patch of crosses ends up in row $|\bi|=i+k_1+\cdots +k_\ell$ and is not moved again.

    After performing all of the inverse ladder moves in the above algorithm, we end up with $D_b$. Since $\mu=\max\{i\. : \.  c_i(u)>0\}$, we know that no cross ends up below row $\mu$. This shows that $x_{\bi}$ can only be nonzero if $|\bi|\le \mu$. Hence, $\varphi_u(D)\in \CS_\mu$ and $\varphi_u$ is well-defined.

    The ladder order on $\cL_\mu$ precisely corresponds to the order in which we perform the inverse ladder moves on the patches in the above algorithm. Namely, fix a pipe dream $D\in \pd(u)$ and let $x:=\varphi_u(D)$. Then, for all $\bi,\bi'\in \cL_\mu$ with $x_\bi, x_{\bi'}\neq 0$ we have $\bi\prec \bi'$ if and only if the patch corresponding to $\bi$ is moved before the patch corresponding to $\bi'$.

    Thus, the fact that $\varphi_u$ is injective follows from noting that we can recover $D$ from $\varphi_u(D)$ by starting with $D_b$ and undoing all of the inverse ladder moves (with the corresponding ladder moves) done in the above algorithm. Reading the tuple $x=\varphi_u(D)$ backwards gives the number of crosses on which the ladder moves should be done in each step. In particular, if $x_{\bi}=j$, then we perform $\ell$ ladder moves on each of the right-most $j$ crosses in row $|\bi|$, moving them up first $k_\ell$ rows, then $k_{\ell-1}$ rows, and so on until they end in row $i$. See Example~\ref{ex:pdalgo} below.

    Unraveling the definition of the $u$-weight in the context of the above algorithm, we see that the $j$-th entry of $\wtx_u(\varphi_u(D))$ is $c_j(u)$ minus the number of crosses that move up from row $j$ when doing the ladder moves to obtain $D$ from $D_b$ plus the number of crosses that move up into row $j$ during this process. This is precisely the total number of crosses that end up in row $j$ of $D$ which is indeed the $j$-th entry of $\wtx(D)$ by definition.
\end{proof}

\smallskip

\begin{rem}\label{r:numpatches}
    Let \. $u\in S_n$, \. $\mu:=\max\{i\. : \.  c_i(u)>0\}$, \. $D\in \pd(u)$, and \ts $x:=\varphi_u(D)$. By keeping track of how the crosses move when applying the algorithm in Lemma~\ref{l:pdalgo}, we can observe that for each pair \ts $1\le i <j\le \mu$, there is at most one index \ts $\bi=(i,k_1,\ldots, k_\ell)\in \cL_\mu$ \ts with \ts $x_\bi\neq 0$ \ts and \ts $|\bi|=j$.
    In other words, when performing the algorithm in Lemma~\ref{l:pdalgo} to obtain $D_\Bot(u)$ from $D$, there is at most one patch of crosses that starts in row $i$ and is moved down to row $j$ for each $1\le i<j\le \mu$.
\end{rem}

\smallskip

\begin{ex}\label{ex:pdalgo}
    Continuing Example~\ref{ex:pd}, we have $\mu=4$ so only the first four rows of the pipe dream $D$ will be changed by the algorithm in Lemma~\ref{l:pdalgo}. Omitting the labels and the bottom four rows for simplicity, the sequence of inverse ladder moves is shown in Figure~\ref{fig:pdalgo}.

\begin{figure}[hbt]
    \begin{tikzpicture}[scale=0.47]

\node at (-6, -1) {$D=$};

\node at (-4,0) {$\cdot$};
\node at (-3,0) {$\cdot$};
\node at (-2,0) {$+$};
\node at (-1,0) {$\cdot$};
\node at (0,0) {$+$};
\node at (1,0) {$\cdot$};
\node at (2,0) {$\cdot$};
\node at (3,0) {$\cdot$};

\node at (-4,-1) {$+$};
\node at (-3,-1) {$\cdot$};
\node at (-2,-1) {$+$};
\node at (-1,-1) {$\cdot$};
\node at (0,-1) {$+$};
\node at (1,-1) {$+$};
\node at (2,-1) {$\cdot$};

\node at (-4,-2) {$+$};
\node at (-3,-2) {$+$};
\node at (-2,-2) {$+$};
\node at (-1,-2) {$\cdot$};
\node at (0,-2) {$\cdot$};
\node at (1,-2) {$\cdot$};

\node at (-4,-3) {$+$};
\node at (-3,-3) {$\cdot$};
\node at (-2,-3) {$\cdot$};
\node at (-1,-3) {$\cdot$};
\node at (0,-3) {$\cdot$};

\node at (6,-1) {$\xrightarrow{L_{2,6}^{-1}\. \circ \. L_{2,5}^{-1}}$};

\node at (9,0) {$\cdot$};
\node at (10,0) {$\cdot$};
\node at (11,0) {$+$};
\node at (12,0) {$\cdot$};
\node at (13,0) {$+$};
\node at (14,0) {$\cdot$};
\node at (15,0) {$\cdot$};
\node at (16,0) {$\cdot$};

\node at (9,-1) {$+$};
\node at (10,-1) {$\cdot$};
\node at (11,-1) {$+$};
\node at (12,-1) {$\cdot$};
\node at (13,-1) {$\cdot$};
\node at (14,-1) {$\cdot$};
\node at (15,-1) {$\cdot$};

\node at (9,-2) {$+$};
\node at (10,-2) {$+$};
\node at (11,-2) {$+$};
\node at (12,-2) {$+$};
\node at (13,-2) {$+$};
\node at (14,-2) {$\cdot$};

\node at (9,-3) {$+$};
\node at (10,-3) {$\cdot$};
\node at (11,-3) {$\cdot$};
\node at (12,-3) {$\cdot$};
\node at (13,-3) {$\cdot$};

\node at (18,-1) {$\xrightarrow{L_{2,3}^{-1}}$};

\node at (20,0) {$\cdot$};
\node at (21,0) {$\cdot$};
\node at (22,0) {$+$};
\node at (23,0) {$\cdot$};i
\node at (24,0) {$+$};
\node at (25,0) {$\cdot$};
\node at (26,0) {$\cdot$};
\node at (27,0) {$\cdot$};

\node at (20,-1) {$+$};
\node at (21,-1) {$\cdot$};
\node at (22,-1) {$\cdot$};
\node at (23,-1) {$\cdot$};
\node at (24,-1) {$\cdot$};
\node at (25,-1) {$\cdot$};
\node at (26,-1) {$\cdot$};

\node at (20,-2) {$+$};
\node at (21,-2) {$+$};
\node at (22,-2) {$+$};
\node at (23,-2) {$+$};
\node at (24,-2) {$+$};
\node at (25,-2) {$\cdot$};

\node at (20,-3) {$+$};
\node at (21,-3) {$+$};
\node at (22,-3) {$\cdot$};
\node at (23,-3) {$\cdot$};
\node at (24,-3) {$\cdot$};

\node at (-4,-6) {$\xrightarrow{L_{1,5}^{-1}}$};

\node at (-2,-5) {$\cdot$};
\node at (-1,-5) {$\cdot$};
\node at (0,-5) {$+$};
\node at (1,-5) {$\cdot$};
\node at (2,-5) {$\cdot$};
\node at (3,-5) {$\cdot$};
\node at (4,-5) {$\cdot$};
\node at (5,-5) {$\cdot$};

\node at (-2,-6) {$+$};
\node at (-1,-6) {$\cdot$};
\node at (0,-6) {$\cdot$};
\node at (1,-6) {$+$};
\node at (2,-6) {$\cdot$};
\node at (3,-6) {$\cdot$};
\node at (4,-6) {$\cdot$};

\node at (-2,-7) {$+$};
\node at (-1,-7) {$+$};
\node at (0,-7) {$+$};
\node at (1,-7) {$+$};
\node at (2,-7) {$+$};
\node at (3,-7) {$\cdot$};

\node at (-2,-8) {$+$};
\node at (-1,-8) {$+$};
\node at (0,-8) {$\cdot$};
\node at (1,-8) {$\cdot$};
\node at (2,-8) {$\cdot$};

\node at (7,-6) {$\xrightarrow{L_{2,4}^{-1}}$};

\node at (9,-5) {$\cdot$};
\node at (10,-5) {$\cdot$};
\node at (11,-5) {$+$};
\node at (12,-5) {$\cdot$};
\node at (13,-5) {$\cdot$};
\node at (14,-5) {$\cdot$};
\node at (15,-5) {$\cdot$};
\node at (16,-5) {$\cdot$};

\node at (9,-6) {$+$};
\node at (10,-6) {$\cdot$};
\node at (11,-6) {$\cdot$};
\node at (12,-6) {$\cdot$};
\node at (13,-6) {$\cdot$};
\node at (14,-6) {$\cdot$};
\node at (15,-6) {$\cdot$};

\node at (9,-7) {$+$};
\node at (10,-7) {$+$};
\node at (11,-7) {$+$};
\node at (12,-7) {$+$};
\node at (13,-7) {$+$};
\node at (14,-7) {$\cdot$};

\node at (9,-8) {$+$};
\node at (10,-8) {$+$};
\node at (11,-8) {$+$};
\node at (12,-8) {$\cdot$};
\node at (13,-8) {$\cdot$};

\node at (18,-6) {$\xrightarrow{L_{1,3}^{-1}}$};

\node at (20,-5) {$\cdot$};
\node at (21,-5) {$\cdot$};
\node at (22,-5) {$\cdot$};
\node at (23,-5) {$\cdot$};
\node at (24,-5) {$\cdot$};
\node at (25,-5) {$\cdot$};
\node at (26,-5) {$\cdot$};
\node at (27,-5) {$\cdot$};

\node at (20,-6) {$+$};
\node at (21,-6) {$+$};
\node at (22,-6) {$\cdot$};
\node at (23,-6) {$\cdot$};
\node at (24,-6) {$\cdot$};
\node at (25,-6) {$\cdot$};
\node at (26,-6) {$\cdot$};

\node at (20,-7) {$+$};
\node at (21,-7) {$+$};
\node at (22,-7) {$+$};
\node at (23,-7) {$+$};
\node at (24,-7) {$+$};
\node at (25,-7) {$\cdot$};

\node at (20,-8) {$+$};
\node at (21,-8) {$+$};
\node at (22,-8) {$+$};
\node at (23,-8) {$\cdot$};
\node at (24,-8) {$\cdot$};

\end{tikzpicture}
\caption{\label{fig:pdalgo}Applying the algorithm of Lemma~\ref{l:pdalgo} to obtain $D_\Bot(u)$ from $D$ by a sequence of inverse ladder moves}
\end{figure}

Thus, using the ordering of Example~\ref{ex:ladderorder}, we can see that
\[x:=\varphi_u(D)=(0,0,2,1,0,0,1,1,0,0,0)\in \CS_4,\]
where the nonzero entries correspond to
\[x_{(2,1)}=2, \quad x_{(2,2)}=1, \quad x_{(1,1,2)}=1, \quad x_{(1,1)}=1.\]
We can then recover $D$ by starting with $D_\Bot(u)$ and performing the following ladder moves as determined by reading the nonzero entries of $x$ backwards:
\begin{itemize}
    \item [1.] $x_{(1,1)}=1$ corresponds to performing one ladder move on the right-most cross of row $2$ to move it up $1$ row. This is achieved by doing $L_{2,2}=(L_{1,3}^{-1})^{-1}$.
    \item [2.] $x_{(1,1,2)}=1$ corresponds to performing two ladder moves on the right-most cross of row $4$ to move it up $2$ rows and then $1$ row. This is achieved by doing $L_{4,3}=(L_{2,4}^{-1})^{-1}$ then $L_{2,4}=(L_{1,5}^{-1})^{-1}$.
    \item [3.] $x_{(2,2)}=1$ corresponds to performing one ladder move on the right-most cross of row $4$ to move it up $2$ rows. This is achieved by doing $L_{4,2}=(L_{2,3}^{-1})^{-1}$.
    \item [4.] $x_{(2,1)}=2$ corresponds to performing one ladder move on the right-most $2$ crosses of row $3$ to move them up $1$ row. This is achieved by doing $L_{3,5}=(L_{2,6}^{-1})^{-1}$ then $L_{3,6}=(L_{2,5}^{-1})^{-1}$.
\end{itemize}
As before, we have $\wtx(D)=(2,4,3,1)$ and we can see that
\[\wtx_u(x)=(0+1+1,\ts 2-1+1+2, \ts 5-2, \ts 3-1-1)=(2,4,3,1)=\wtx(D).\]
\end{ex}

\smallskip

\subsection{Compatible ladder sequences}\label{ss:ineqs}

Fix $u\in S_n$, let $\mu:=\max\{i\. : \.  c_i(u)>0\}$ where $c_i(u)$ is the $i$-th entry of $\code(u)$, and fix a subset $\vk\subseteq \cL_\mu$. Recall that we write $\bi,\bi',\bi''\in \cL_\mu$ for $(i,k_1,\ldots, k_\ell), (i', k_1',\ldots, k_{\ell'}'), (i'', k_1'',\ldots, k_{\ell''}'')$ respectively. For each $\bi\in \vk$ and $i< j \le |\bi|$, define
\begin{equation}\label{eq:cijdef}
    c_j^\bi \. : \. \CS_\mu\to \Z\., \qquad c_j^\bi(x)\. := \. c_j(u)-\sum_{\bi'} x_{\bi'}\.,
\end{equation}
where the summation is over
\[\bi'\in \vk \qquad \text{such that} \qquad \bi\prec \bi' \quad \text{and}\quad |\bi'|=j.\]
Define
\[\vk^\bi \. := \. \{\bi'\in \vk\. : \.  \bi\prec \bi' \text{ and } i=i'\}.\]
We define $c_i^\bi$ differently according to whether $\vk^\bi$ is empty or not. If $\vk^\bi=\emp$, then let
\begin{equation}\label{eq:ciidefempty}
    c_i^\bi\. : \. \CS_\mu\to \Z\., \qquad c_i^\bi(x)\. := \. c_i(u)-\sum_{\bi'}x_{\bi'}\.,
\end{equation}
where the summation is over
\[\bi'\in \vk \qquad \text{such that} \qquad \bi\prec \bi' \quad \text{and}\quad |\bi'|=i.\]
Note that this is the same as \eqref{eq:cijdef}. On the other hand, if $\vk^\bi\neq \emp$, let $\bi'$ be the minimal element of $\vk^\bi$ with respect to the ladder order. Then, define
\begin{equation}\label{eq:ciidefnonempty}
    c_i^\bi\.:\.\CS_\mu\to \Z\.,\qquad c_i^\bi(x)\. := \. c_{|\bi'|}^\bi(x)+x_{\bi'}+\ell'\.,
\end{equation}
where $|\bi'|> i$ so $c_{|\bi'|}^\bi$ is defined by \eqref{eq:cijdef}.

We define the set of \defn{$(\vk,u)$-compatible ladder sequences}, denoted \ts $\CS_{\vk, u}$, to be the subset \ts
$\CS_{\vk, u}\subseteq\CS_\mu$ \ts consisting of elements $x\in \CS_\mu$ that satisfy the following conditions:
\begin{itemize}
\item [$\bullet$] For all $\bi\in \cL_\mu\ts$,
    \begin{equation}\label{eq:ineqa}
        \bi \in \vk \iff x_\bi \neq 0.
    \end{equation}
\item [$\bullet$] For all \ts $j\in [\mu]$,
    \begin{equation}\label{eq:ineqb}
        \sum_{\bi\in \vk, \ts |\bi|\ts =\ts j} x_\bi\le c_j(u).
    \end{equation}
\item [$\bullet$] For all \ts $\bi\in \vk$, $\, p\in [\ell]$, $\, q\in [k_p]$,
    \begin{itemize}
        \item [$\circ$] If \ts $q=k_p\ts$, then
        \begin{equation}\label{eq:ineqci}
            c_{i+k_1+\cdots + k_{p-1}}^\bi(x)\, \le \, c_{|\bi|}^\bi(x) -x_\bi +\ell -p.
        \end{equation}
        \item [$\circ$] If \ts $q\neq k_p\ts$, then
        \begin{equation}\label{eq:ineqcii}
            c_{i+k_1+\cdots + k_{p-1}+q}^\bi(x) \, \ge \, c_{|\bi|}^\bi(x) +\ell -p+1.
        \end{equation}
    \end{itemize}
\end{itemize}

\smallskip
Then, define the set of \defn{$u$-compatible ladder sequences}, denoted $\CS_u$ to be
\[
\CS_u\. := \. \bigcup_{\vk \ts \subseteq \ts \cL_\mu} \CS_{\vk,u}\..
\]

\begin{rem}\label{r:ineqform}
    We can see that all of the inequalities describing $\CS_{\vk,u}$ are of the form
    \[
    \sum_{\bi\ts \in\ts \vk} \. a_{\bi}x_{\bi} \, \le \, c_j(u)-d\cdot c_{j'}(u) \. + \. b,
    \]
    for some \. $a_{\bi}\in \{0,1,-1\}, j,j'\in [\mu],d\in \{0,1\}$ \. and \ts $b\in \Z$ \ts that are independent of~$u$. Here we allow $d=0$ in order to get inequalities of the form \eqref{eq:ineqb} that only involve one entry of $\code(u)$. We can get inequalities of the form \eqref{eq:ineqa} that involve no entries of $\code(u)$ by setting $d=1, j=j'$.
\end{rem}
\begin{rem}\label{r:disjoint}
    Note also that for any $\vk\neq  \vk'\subseteq\cL_\mu$, condition \eqref{eq:ineqa} immediately implies that \ts
    $\CS_{\vk, u}\cap \CS_{\vk',u}=\emp$, so $\CS_u$ is a disjoint union of the $\CS_{\vk,u}$'s.
    Hence, we have \. $|\CS_u|=\sum_{\vk \ts \subseteq \ts \cL_\mu} |\CS_{\vk,u}|.$
\end{rem}

\medskip

\section{Proving the bijection}\label{s:bijection}

In this somewhat technical section, we prove that the map defined in Lemma~\ref{l:pdalgo} is a bijection between the set of pipe dreams for a permutation and the set of ladder sequences that satisfy the inequalities described in $\S$\ref{ss:ineqs}. In this way, we reformulate the Schubert coefficient problem as a problem of counting lattice points in a polytope.

\subsection{The setup}\label{ss:bij-setup}

Fix $u\in S_n$ and let $\mu:=\max\{i\. : \.  c_i(u)>0\}$. In Lemma~\ref{l:pdalgo}, we defined a weight-preserving injection $\varphi_u:\pd(u)\to \CS_\mu$. In $\S$\ref{ss:ineqs}, we described inequalities on the set $\CS_\mu$ to define the subset $\CS_u\subseteq\CS_\mu$. Let $\Im(\varphi_u)\subseteq\CS_\mu$ be the image of $\varphi_u$. In this section, we show that
\[\CS_u = \Im(\varphi_u),\]
so $\varphi_u:\pd(u)\to \CS_u$ is a weight-preserving bijection.

\smallskip

\subsection{The image of $\varphi_u$ is $u$-compatible}\label{ss:bij-proof}
To prove that \ts $\varphi_u$ \ts is a bijection, we start by showing the inclusion:

\smallskip

\begin{prop}\label{prop:imsubSu}
    Let \ts $u\in S_n$.  Then \. $\Im(\varphi_u)\subseteq \CS_u\ts$.
\end{prop}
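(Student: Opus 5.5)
The plan is to take $D\in\pd(u)$, set $x:=\varphi_u(D)$ and $\vk:=\{\bi\in\cL_\mu : x_\bi\neq 0\}$, and check that $x\in\CS_{\vk,u}$. Condition \eqref{eq:ineqa} then holds by definition. We will work throughout with the \emph{reverse} process from the proof of Lemma~\ref{l:pdalgo}. We start at $D_b=D_\Bot(u)$ and process the indices of $\vk$ in decreasing ladder order. For each $\bi$ we take the rightmost $x_\bi$ crosses of row $|\bi|$ and apply ladder moves to each of them, lifting them by $k_\ell$, then $k_{\ell-1}$, and so on, up to row $i$. The key bookkeeping claim is the following. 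Just before the patch for $\bi$ is moved (in the reverse process), all rows $j$ with $i<j\le|\bi|$ are still left-aligned, and for $j>i$ they contain exactly $c_j^\bi(x)$ crosses. This holds because each $\bi'\succ\bi$ with $|\bi'|=j$ has already removed $x_{\bi'}$ crosses from row $j$, and no patch processed so far has landed in a row strictly between $i$ and $|\bi|$ in a non-left-aligned way. The latter follows from the ordering (a),(b),(c) and the structure shown in the proof of Lemma~\ref{l:inverseladder}.

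Inequality \eqref{eq:ineqb} follows at once. The crosses removed from row $j$ across all $\bi$ with $|\bi|=j$ are distinct crosses of $D_b$ in that row, so their number is at most $c_j(u)$. Remark~\ref{r:numpatches} guarantees that they are counted once each.

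For \eqref{eq:ineqci} and \eqref{eq:ineqcii}, we analyse a single ladder move, in the form given by the proof of Lemma~\ref{l:inverseladder}. Consider the $x_\bi$ crosses of the patch sitting in row $r_p:=i+k_1+\dots+k_p$, with the patch left-aligned there. The row $r_p$ then has $c^\bi_{|\bi|}(x)-x_\bi+(\ell-p)$ crosses remaining to the left of the patch. The count $c^\bi_{|\bi|}(x)-x_\bi$ comes from the bottom row. The shift $\ell-p$ comes from the fact that each ladder move moves a cross one column to the right, so after $\ell-p$ moves the patch occupies columns starting $\ell-p$ further right. For the patch to jump from row $r_p$ to row $r_{p-1}$ by a ladder move with exactly $m=k_p$, two things are needed. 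First, every intermediate row $r_{p-1}+q$ with $q<k_p$ must contain crosses in the two relevant columns. This is condition (c) of the ladder move, and since those rows are left-aligned with $c^\bi_{r_{p-1}+q}(x)$ crosses, it becomes \eqref{eq:ineqcii}. Second, row $r_{p-1}$ must have elbows there, which is condition (b) and becomes \eqref{eq:ineqci}. The case $p=1$, where the target row is $i$ itself, is handled separately. Row $i$ may already contain crosses from patches $\bi'\in\vk^\bi$ that were placed earlier in the reverse process. The definition \eqref{eq:ciidefnonempty} records exactly the position of the rightmost such cross: $c^\bi_{|\bi'|}+x_{\bi'}+\ell'$ is where the last patch placed in row $i$ ends. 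So condition (b) for the final move becomes \eqref{eq:ineqci} with $p=1$.

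The main obstacle is the bookkeeping claim in the first paragraph: that at the moment $\bi$ is processed, every row strictly between $i$ and $|\bi|$ is left-aligned, with its count given by \eqref{eq:cijdef}, and that the crosses in row $i$ are precisely those from $\vk^\bi$, lying in the expected columns. I would prove this by induction along the ladder order, using Lemma~\ref{l:inverseladder} run in reverse and the maximality choices in the algorithm of Lemma~\ref{l:pdalgo}. Once it is in place, each inequality is a direct translation of conditions (b) and (c) for the corresponding ladder move.
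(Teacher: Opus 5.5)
Your proposal is correct and follows essentially the paper's route. The configuration just before patch $\bi$ is moved in your reverse process plays the role of the paper's $\wt D$. Your bookkeeping claim is the paper's claim that $c_j^\bi(x)$ is the column of the rightmost cross in row $j$ of $\wt D$. The inequalities \eqref{eq:ineqci} and \eqref{eq:ineqcii} are read off from conditions (b) and (c) of the ladder moves exactly as there. The only difference is packaging: the paper uses a downward induction on the revlex weight, so only the first index of $\vk$ needs checking, while you unroll this and check every index along the reverse process.

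Two small precision points:
\begin{itemize}
\item The number $c^\bi_{|\bi|}(x)-x_\bi+\ell-p$ counts the \emph{columns} to the left of the patch in row $r_p$, not the crosses there.
\item For $p=1$, condition (b) alone only says the landing cells are empty. To get $c^\bi_i(x)\le c^\bi_{|\bi|}(x)-x_\bi+\ell-1$ you also need that the patch for $\bi$ ends up as the rightmost block of row $i$, which comes from the algorithm's choice of maximal $j$.
\end{itemize}
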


\begin{proof}
    Let $\mu:=\max\{i\. : \.  c_i(u)>0\}$. We perform downward induction on the weight of a pipe dream with respect to the reverse lexicographic order. The base case is the unique maximal element $D_\Bot(u)$ which has weight $\wtx(D_\Bot(u))=\code(u)$. It is clear from the algorithm in Lemma~\ref{l:pdalgo} that $\varphi_u(D_\Bot(u))=\mathbf{0}$ and $\mathbf{0} \in \CS_{\emp, u}\subseteq \CS_u$.

    Now, let $D\in \pd(u)$ and assume that for any $\wt{D}\in \pd(u)$ with $\wtx(\wt{D})>_\text{\rm revlex} \wtx(D)$, we know that $\varphi_u(\wt{D})\in \CS_u$. Let $\wt{D}$ be the pipe dream obtained from $D$ by applying the algorithm in Lemma~\ref{l:pdalgo} until one patch of crosses becomes left-aligned. For ease of notation, define
    \[x:=\varphi_u(D)\qquad \text{and} \qquad \wt{x}:=\varphi_u(\wt{D}).\]

    By definition of $\varphi_u$, we can see that $\wt{x}$ is equal to $x$ with the first (in the ladder order) nonzero entry replaced by zero. Let $\bi\in \cL_\mu$ be the index for which $\wt{x}_{\bi}=0$ and $x_{\bi}\neq 0$.

    Since we have moved crosses down to obtain $\wt{D}$ from $D$, we have $\wtx(\wt{D})>_\text{revlex} \wtx(D)$ so  $\wt{x}\in \CS_u$ by the induction hypothesis. Let $\wt{\vk}\subseteq \cL_\mu$ be the unique indexing set with $\wt{x}\in \CS_{\wt{\vk}, u}$ and let $\vk:=\wt{\vk}\cup \{\bi\}$.

    \medskip
    \nin
    {\bf Claim:} \ts $x\in \CS_{\vk, u}\subseteq  \CS_u$ and thus $\Im(\varphi_u)\subseteq \CS_u$.

    \emph{Proof.} Since $\wt{x}\in \CS_{\wt{\vk}, u}$, \eqref{eq:ineqa} tells us that
    \[\wt{\vk}=\{\bi'\in \cL_\mu\. : \.  \wt{x}_{\bi'}\neq 0\}\]
    and thus, by construction,
    \[\vk=\{\bi'\in \cL_\mu\. : \.  \wt{x}_{\bi'}\neq 0\}\cup \{\bi\}=\{\bi'\in \cL_\mu\. : \.  x_{\bi'}\neq 0\},\]
    so $x$ satisfies \eqref{eq:ineqa} to be in $\CS_{\vk, u}$.

    Since $x=\wt{x}$ except at index $x_\bi$ and since $\wt{x}$ satisfies \eqref{eq:ineqb} for all $j\in [\mu]$, we can see that $x$ satisfies \eqref{eq:ineqb} for all $j\in [\mu]-|\bi|$. For $j=|\bi|$, \eqref{eq:ineqb} says that the number of crosses that leave the $j$-th row of $D_\Bot(u)$ to make $D$ is less than or equal to the $j$-th entry of $\code(u)$. This must be true since $D$ is a valid pipe dream that can be obtained by starting with $\wt{D}$ and moving $x_\bi$ crosses up from row $|\bi|$. Thus, $x$ satisfies \eqref{eq:ineqb} for all $j\in [\mu]$.

    Now, we show that $x$ satisfies \eqref{eq:ineqci} and \eqref{eq:ineqcii} for all $\bi'\in \vk, p\in [\ell'],$ and $q\in [k_p']$. For a given index $\bi'\in \vk$ and all $i'\le j \le |\bi'|$, note that $c_j^{\bi'}(x)$ only involves indices $\bi''\in \vk$ with $\bi''\succ \bi'$. Thus, for a given index $\bi'\in \vk$, \eqref{eq:ineqci} and \eqref{eq:ineqcii} only involve indices $\bi''\in \vk$ with $\bi''\succcurlyeq \bi'$. We know that
    \[\wt{x}_{\bi''}=x_{\bi''}\qquad \text{for all}\quad \bi''\succ \bi.\]

    Hence, the only inequalities of the form \eqref{eq:ineqci} or \eqref{eq:ineqcii} that are different for $x$ and $\wt{x}$ are those corresponding to indices $\bi'\in \vk$ with $\bi'\preccurlyeq \bi$.  By construction, the only such $\bi'$ is $\bi'=\bi$. So since $\wt{x}$ satisfies these inequalities for all $\bi'$, we only need to show that $x$ satisfies \eqref{eq:ineqci} and \eqref{eq:ineqcii} for the index $\bi$. We simplify notation by letting
    \[c_j:=c_j^\bi(x) \qquad \text{for each}\quad  i\le j\le |\bi|.\]

    \medskip
    \nin
    {\bf Claim:} \ts For all $i\le j\le |\bi|,$ the number $c_j$ is the column index of the right-most cross in row $j$ of~$\wt{D}$.
    \begin{proof}[Proof of claim]

    First, let us treat the case of $j>i$. Since $\wt{x}$ is equal to $x$ except with $\wt{x}_{\bi}=0$, \eqref{eq:cijdef} gives
    \[c_j=c_j(u)- \sum_{\bi'} \wt{x}_{\bi'},\]
    where the summation is over
    \[\bi'\in \vk \qquad \text{such that} \qquad \bi\prec \bi' \quad \text{and} \quad |\bi'|=j.\]
    Then, we know that $\wt{x}_{\bi'}=0$ for all $\bi'\notin \wt{\vk}\subseteq \vk$ and for all $\bi'\preccurlyeq \bi$. So
    \[c_j=c_j(u)- \sum_{\bi'} \wt{x}_{\bi'},\]
    where the summation is taken over the larger set
    \[\bi'\in \cL_\mu \qquad \text{such that} \qquad |\bi'|=j.\]
    Finally, since $j>i$, we also have $\wt{x}_{\bi'}=0$ for all $\bi'\in \cL_\mu$ with $i'=j$. So
    \[\sum_{\bi'\ts \in \ts \cL_\mu,\ts i'\ts =\ts j}\wt{x}_{\bi'}=0,\]
    and we can write
    \[c_j \, = \, c_j(u) \. - \. \sum_{\bi'\ts \in\ts  \cL_\mu,\ts |\bi'|\ts=\ts j} \wt{x}_{\bi'} \. + \. \sum_{\bi'\ts\in\ts\cL_\mu, \ts i'\ts=\ts j} \wt{x}_{\bi'} \, = \, \cw_j,
    \]
    where $\cw_j$ is the $j$-th entry of $\wtx_u(x)$ by \eqref{eq:uwtdef}. Then, Lemma~\ref{l:pdalgo} implies that this is the $j$-th entry of $\wtx(D)$ since $x=\varphi_u(D)$. Note that all of the crosses in row $j$ of $\wt{D}$ for $j>i$ are left-aligned so $c_j$ is thus the column index of the right-most cross in row $j$ of $\wt{D}$, proving the claim for $j>i$.

    Now, we deal with the $j=i$ case. First, suppose that $\vk^{\bi}=\emp$. This means that the patch of crosses in $D$ corresponding to $\bi$ is the left-most patch of non-left-aligned crosses in row $i$. By choice of $\bi$, it is also the right-most patch of crosses in row $i$ of $D$.

    Thus, since we left-align this patch of crosses to obtain $\wt{D}$ from $D$, we can see that the crosses in rows $i$ and below of $\wt{D}$ are all left-aligned. In particular, the column index of the right-most cross in row $i$ of $\wt{D}$ is just the number of crosses in row $i$ of $\wt{D}$ which is $c_i$ by a similar argument to the $j>i$ case and by \eqref{eq:ciidefempty}.

    Second, suppose that $\vk^{\bi}\neq \emp$ and let $\bi'$ be the minimal element of $\vk_{\bi}$. Now, by \eqref{eq:ciidefnonempty}, we have
    \[c_i=c_{|\bi'|}+x_{\bi'}+\ell'.\]

    By the $j>i$ case, we know that $c_{|\bi'|}$ is the number of crosses in row $|\bi'|$ of $\wt{D}$. We know that in forming $\wt{D}$ from $D_\Bot(u)$, the last set of ladder moves we do is to move a patch of $x_{\bi'}$ crosses up from row $|\bi'|$ to row $i'=i$ in $\ell'$ steps. It is easy to see that $c_i$ is the column index of the right-most cross in this patch. By choice of $\bi'$, this is the right-most cross in row $i$ of $\wt{D}$, proving the claim.
    \end{proof}

    By construction, we can obtain $D$ from $\wt{D}$ by performing $\ell$ sets of ladder moves on the right-most $x_{\bi}$ crosses that start in row $|\bi|$ of $\wt{D}$. The first set of ladder moves move each cross in the patch up $k_\ell$ rows, the second $k_{\ell-1}$ and so on with the $\ell$-th moving each cross in the patch up $k_1$ rows into row $i$.

    Then, fixing $p\in [\ell]$ and letting $q\in [k_p]$ vary, it can be easily checked that \eqref{eq:ineqci} and \eqref{eq:ineqcii} correspond to the fact that the $(\ell-p+1)$-th set of ladder moves that move this patch up $k_{p}$ rows are all valid ladder moves.

    Since these are all valid ladder moves, $x$ satisfies the inequalities in \eqref{eq:ineqci} and \eqref{eq:ineqcii} corresponding to the index $\bi$ and so $x\in \CS_{\vk, u}$, showing the claim and the finishing the proof of the proposition.
\end{proof}

\smallskip

\subsection{Map $\varphi_u$ is a bijection}\label{ss:bij-proof2}
We now prove the opposite inclusion:

\smallskip

\begin{prop}\label{prop:Susubim}
    Let $u\in S_n$.  Then \. $\CS_u\subseteq \Im(\varphi_u)$.
\end{prop}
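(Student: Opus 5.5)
The plan is induction on the number of nonzero entries of $x\in\CS_u$, that is, on $|\vk|$ where $x\in\CS_{\vk,u}$. This mirrors the proof of Proposition~\ref{prop:imsubSu}, but now we build a pipe dream from $x$ rather than decode one. The base case $\vk=\emp$ forces $x=\mathbf 0$, and $\mathbf 0=\varphi_u(D_\Bot(u))$.

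For the inductive step, let $x\in\CS_{\vk,u}$ with $\vk\neq\emp$, and let $\bi$ be the $\prec$-minimal element of $\vk$. Define $\wt x$ to be $x$ with the entry $x_\bi$ replaced by $0$, and set $\wt\vk=\vk-\{\bi\}$. We first check that $\wt x\in\CS_{\wt\vk,u}$.
\begin{itemize}
\item Condition \eqref{eq:ineqa} is immediate.
\item Condition \eqref{eq:ineqb} survives because we only decreased a nonnegative summand.
\item For every $\bi'\in\wt\vk$ we have $\bi'\succ\bi$. The functions $c^{\bi'}_j$ only involve indices $\bi''\succ\bi'$, and the set $\vk^{\bi'}$ and its minimal element are unchanged when $\bi$ is removed. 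Hence the inequalities \eqref{eq:ineqci} and \eqref{eq:ineqcii} for $\bi'$ are literally the same for $x$ and $\wt x$.
\end{itemize}
By induction, $\wt x=\varphi_u(\wt D)$ for some $\wt D\in\pd(u)$.

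Next, we show that the numbers $c_j:=c_j^\bi(x)$, for $i\le j\le|\bi|$, are exactly the column indices of the right-most crosses in row $j$ of $\wt D$. The argument is the same as the inner claim in the proof of Proposition~\ref{prop:imsubSu}, using the weight formula \eqref{eq:uwtdef} together with Lemma~\ref{l:pdalgo}. Since $\bi$ is minimal, every row strictly between $i$ and $|\bi|$, and row $|\bi|$ itself, is left-aligned in $\wt D$. Row $i$ is either left-aligned (when $\vk^\bi=\emp$) or ends with the patch indexed by the minimal $\bi'\in\vk^\bi$, in which case \eqref{eq:ciidefnonempty} gives its right end.

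With this description of $\wt D$ in hand, we build $D$. Take the right-most $x_\bi$ crosses of row $|\bi|$; there are enough of them by \eqref{eq:ineqb}, since $c_{|\bi|}\ge x_\bi$. Move this patch up $k_\ell$ rows, then $k_{\ell-1}$ rows, and so on, ending with $k_1$ rows into row $i$, applying the ladder moves cross by cross from left to right within the patch. The key point, and the main obstacle, is verifying that each of these ladder moves is legal.

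Fix the stage $p$, which moves the patch up $k_p$ rows. The conditions \eqref{eq:ineqcii}, for the intermediate rows $q<k_p$, should guarantee that the intermediate rows contain crosses in both columns of the ladder. This is condition (c) in the definition of a ladder move. The condition \eqref{eq:ineqci}, for the target row, should guarantee that the target row has dots in those two columns. This is condition (b). To check this cleanly, I would track explicitly the column positions of the moving patch: after $\ell-p$ stages it occupies columns $c_{|\bi|}-x_\bi+1+(\ell-p),\dots,c_{|\bi|}+(\ell-p)$, since each ladder move shifts a cross one column to the right. I would then compare these positions with the right ends $c_j$ of the relevant rows. The offsets $\ell-p$ and $\ell-p+1$ appearing in \eqref{eq:ineqci} and \eqref{eq:ineqcii} are designed exactly for this comparison.

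Once legality is established, the resulting $D$ lies in $\pd(u)$, because ladder moves preserve reduced pipe dreams for $u$ (Proposition~\ref{t:schubpd}). It then remains to check that running the algorithm of Lemma~\ref{l:pdalgo} on $D$ first undoes exactly this patch, which returns $\wt D$, and then proceeds exactly as it does on $\wt D$. This holds because the new patch in row $i$ is the bottom-most, right-most non-left-aligned patch of $D$. That follows from the minimality of $\bi$ together with \eqref{eq:ineqci} at $p=1$, which places the patch strictly to the right of the existing crosses in row $i$. Therefore $\varphi_u(D)=x$, which completes the induction.
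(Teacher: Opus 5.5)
Your overall route is the paper's: induction on $|\vk|$, stripping the $\prec$-minimal index $\bi$, identifying $c^\bi_j(x)$ with the right ends of the rows of $\wt D$, rebuilding $D$ by $\ell$ stages of ladder moves, and checking that the algorithm of Lemma~\ref{l:pdalgo} undoes this patch first. Most of your bookkeeping is correct and more explicit than the paper's. For example, $c^\bi_{|\bi|}(x)\ge x_\bi$ does follow from \eqref{eq:ineqb}, because every other $\bi''\in\vk$ with $|\bi''|=|\bi|$ satisfies $\bi''\succ\bi$.

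However, the step you single out as the crux fails as written: you apply the ladder moves within the patch ``from left to right''. Condition (a) for $L_{r,j}$ requires $(r,j+1)\notin D$. If the patch occupies columns $a,a+1,\ldots,b$ of row $r$ with $a<b$, the leftmost cross $(r,a)$ cannot move while $(r,a+1)$ is still a cross. So whenever $x_\bi\ge 2$, your very first move is illegal.

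The order must be right to left. First apply $L_{r,b}$, which sends that cross to $(r-k_p,b+1)$. Then $L_{r,b-1}$ is legal, because $(r,b)$ has been vacated and $(r-k_p,b)$ is still a dot; continue in this way. This is the mirror image of Lemma~\ref{l:inverseladder}, whose inverse moves $L^{-1}_{i,j+1},\ldots,L^{-1}_{i,k}$ run left to right along the upper row.

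With this correction your verification goes through:
\begin{itemize}
\item At stage $p$, the landing row $t$ has all its crosses in columns $\le c^\bi_t(x)$. It must have dots in columns $c^\bi_{|\bi|}(x)-x_\bi+1+(\ell-p)$ through $c^\bi_{|\bi|}(x)+(\ell-p)+1$, and this is exactly \eqref{eq:ineqci}.
\item Each intermediate row is left-aligned in $\wt D$. It must contain at least $c^\bi_{|\bi|}(x)+\ell-p+1$ crosses, and this is exactly \eqref{eq:ineqcii}.
\end{itemize}
Note also what happens at column $c^\bi_{|\bi|}(x)-x_\bi+1+(\ell-p)$ of every landing row, not only row $i$: a dot remains there. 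This dot is what forces the decoding algorithm to keep following this same patch through all $\ell$ stages. As a result, the algorithm returns exactly $\wt D$ and records $x_\bi$ at the index $\bi$.
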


\begin{proof}
    Let $\mu:=\max\{i\. : \.  c_i(u)>0\}$ and suppose $x\in \CS_{\vk, u}$ for some $\vk\subseteq \cL_\mu$. We induct on the size of $\vk$. The base case is $\vk=\emp$, for which it is clear that $\CS_{\emp, u}=\{\mathbf{0}\}$. As noted in the proof of Proposition~\ref{prop:imsubSu}, we have \ts
    $\varphi_u(D_\Bot(u))=\mathbf{0}$, which implies \ts $\mathbf{0} \in \Im(\varphi_u)$ \ts and \ts $\CS_{\emp, u}\subseteq \Im(\varphi_u)$.

    Now, let $\vk\subseteq \cL_\mu$ and assume that for all \ts $\wt{\vk}\subseteq \cL_\mu$ \ts with $|\wt{\vk}|<|\vk|$, we have \ts $\CS_{\wt{\vk}, u}\subseteq \Im(\varphi_u)$. Fix \ts $x\in \CS_{\vk, u}$. We want to show that \ts $x\in \Im(\varphi_u)$, i.e., \ts $x=\varphi_u(D)$ \ts for some pipe dream~$D$. Let \ts $\wt{x}\in \CS_\mu$ \ts be such that $\wt{x}$ is equal to $x$ with the first nonzero entry replaced by zero. Let \ts $\bi\in \cL_\mu$ \ts be the index for which \ts $\wt{x}_{\bi}=0$ \ts and \ts $x_{\bi}\neq 0$.

    \medskip
    \nin
    {\bf Claim:} \ts Let $\wt{\vk}:=\vk - \{\bi\}$. Then, $\wt{x}\in \CS_{\wt{\vk},u}$.
    \begin{proof}[Proof of Claim]
     By construction, $\wt{x}$ satisfies \eqref{eq:ineqa} for $\wt{\vk}$ and it is clear that $\wt{x}$ also satisfies \eqref{eq:ineqb}. As in the proof of Proposition~\ref{prop:imsubSu}, for a given index $\bi'$, \eqref{eq:ineqci} and \eqref{eq:ineqcii} only involve indices $\bi''\in \wt{\vk}$ with $\bi''\succcurlyeq \bi'$. But $\wt{x}_{\bi''}=x_{\bi''}$ for all $\bi''\succ \bi$ so $\wt{x}$ satisfies \eqref{eq:ineqci} and \eqref{eq:ineqcii} for any $\bi'\succ \bi$ since $x$ does. Then, $\wt{\vk}=\{\bi'\in \vk\. : \.  \bi'\succ \bi\}$ by choice of $\bi$ minimal in $\vk$ so indeed $\wt{x}$ satisfies all of the required conditions and is in $\CS_{\wt{\vk},u}$.  This proves the claim.
    \end{proof}

    Thus, as $|\wt{\vk}|<|\vk|$, we have $\wt{x}\in \Im(\varphi_u)$ by the induction hypothesis. Let $\wt{D}\in \pd(u)$ be such that $\varphi_u(\wt{D})=\wt{x}.$

    \medskip
    \nin
    {\bf Claim:} \ts We can obtain a pipe dream $D$ from $\wt{D}$ by performing $\ell$ sets of ladder moves on the right-most $x_{\bi}$ crosses that start in row $|\bi|$ of $\wt{D}$. The first set of ladders moves will move each cross in the patch up $k_\ell$ rows, the second $k_{\ell-1}$ rows and so on with the $\ell$-th moving each cross in the patch up $k_1$ rows into row~$i$.

    \begin{proof}[Proof of Claim]
    As in the proof of Proposition~\ref{prop:imsubSu}, for each $i\le j\le |\bi|$, we have that $c_j^\bi(x)$ is the column index of the right-most cross in row $j$ of $\wt{D}$. Then, the fact that $x$ satisfies \eqref{eq:ineqb} for $j=|\bi|$, and satisfies \eqref{eq:ineqci} and \eqref{eq:ineqcii} for the index $\bi$ ensure that all of these ladder moves are valid so there is indeed such a pipe dream $D$ for~$u$. This proves the claim.
    \end{proof}

    Moreover, this construction and the fact that \ts $\varphi_u(\wt{D})=\wt{x}$, implies that \ts $\varphi_u(D)=x$.  Indeed, when performing the algorithm of Lemma~\ref{l:pdalgo}, we first move the patch corresponding to $\bi$ down until it is left-aligned and record how many crosses were in that patch (which is $x_\bi$ by construction), as the $\bi$-th entry of  $\varphi_u(D)$. After this, we obtain the pipe dream~$\wt{D}$, so continuing the algorithm gives \ts $\varphi_u(D)=\varphi_u(\wt{D})$ \ts for all larger indices. Hence, $x\in \Im(\varphi_u)$ so $\CS_{\vk, u}\subseteq \Im(\varphi_u)$, completing the proof of Proposition~\ref{prop:Susubim}.
\end{proof}

\smallskip

\begin{thm}\label{t:bij}
    Let \ts $u\in S_n$. Then \. $\varphi_u:\pd(u)\to \CS_u$ \. is a weight-preserving bijection.
\end{thm}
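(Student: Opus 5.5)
The statement follows by assembling the results of this section with Lemma~\ref{l:pdalgo}. First, Lemma~\ref{l:pdalgo} gives a well-defined injection $\varphi_u:\pd(u)\to\CS_\mu$ satisfying $\wtx_u(\varphi_u(D))=\wtx(D)$ for every $D\in\pd(u)$. Next, Proposition~\ref{prop:imsubSu} gives $\Im(\varphi_u)\subseteq\CS_u$, so $\varphi_u$ may be regarded as a map $\pd(u)\to\CS_u$. It is still injective, since we have only restricted its codomain.

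For surjectivity onto $\CS_u$, Proposition~\ref{prop:Susubim} gives $\CS_u\subseteq\Im(\varphi_u)$. Combined with the previous inclusion this yields $\Im(\varphi_u)=\CS_u$, so $\varphi_u:\pd(u)\to\CS_u$ is a bijection. Weight-preservation is inherited directly from Lemma~\ref{l:pdalgo}: for any $x\in\CS_u$, write $x=\varphi_u(D)$; then $\wtx_u(x)=\wtx(D)$. Equivalently, $\wtx\circ\varphi_u^{-1}=\wtx_u$ on $\CS_u$.

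No step here is a genuine obstacle, since all the substance was already established in Lemma~\ref{l:pdalgo} and in Propositions~\ref{prop:imsubSu} and~\ref{prop:Susubim}. The one point to check is that the weight function on the target, $\wtx_u$, is defined purely from $x$ and $\code(u)$ via \eqref{eq:uwtdef}, independently of the pipe dream used to produce $x$. This holds, so the weight identity transfers verbatim.

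It will be worth recording a consequence for later use. By Remark~\ref{r:disjoint}, the Schubert--Kostka number equals
\[
K_{u,a}=\sum_{\vk\subseteq\cL_\mu}\bigl|\{x\in\CS_{\vk,u}:\wtx_u(x)=a\}\bigr|.
\]
This expresses $K_{u,a}$ as a sum of lattice-point counts in regions cut out by linear inequalities of the form in Remark~\ref{r:ineqform}. This is the form needed to feed into the Postnikov--Stanley formula \eqref{eq:post-stan}.
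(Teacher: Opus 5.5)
Your proposal is correct and matches the paper's proof, which likewise obtains the theorem by combining the weight-preserving injection of Lemma~\ref{l:pdalgo} with the two inclusions of Propositions~\ref{prop:imsubSu} and~\ref{prop:Susubim}. Your closing formula for $K_{u,a}$ as a sum over $\vk\subseteq\cL_\mu$ is also correct, and it is exactly how the paper later uses the bijection in Lemma~\ref{l:pdtrips}.
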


\begin{proof}
    This follows immediately from Lemma~\ref{l:pdalgo}, Proposition~\ref{prop:imsubSu} and  Proposition~\ref{prop:Susubim}.
\end{proof}

\smallskip

\begin{rem}\label{r:stretchedunion}
    For $u\in S_n$ with $\mu:=\max\{i\. : \.  c_i(u)>0\}$, note that for all $N \ge 1$, we have \.
    $\max\{i\. : \.  c_i(N*u)>0\}=\mu$ \. since \. $c_i(N*u)=Nc_i(u)$ for all $i$. Thus,
    \begin{equation}\label{eq:stretchedunion}
        \Im(\varphi_{N*u})\, = \, \CS_{N*u} \, = \, \bigsqcup_{\vk \ts \subseteq \ts \cL_\mu}\.\CS_{\vk, N*u}\ts.
    \end{equation}
    In other words, all possible pipe dreams for \ts $N*u$ \ts are described by the same sets of
    inequalities, independent of \ts $N\ge 1.$  The parameter~$N$ appears only in the constraints given
    by the code.
\end{rem}

\subsection{An example}\label{ss:bij-ex}
    Continuing Example~\ref{ex:pdalgo}, let
    \[\vk:=\{\bi \in \cL_4\. : \.  x_\bi \neq 0\}=\{(2,1),(2,2),(1,1,2),(1,1)\}.\]

    Let us look at the inequalities defining $\CS_{\vk, u}$. For $j\in [4]$, \eqref{eq:ineqb} becomes
    \begin{align*}
        j=1: \qquad &0\le 0,\\
        j=2: \qquad &1=x_{(1,1)}\le 2,\\
        j=3: \qquad &2=x_{(2,1)}\le 5,\\
        j=4: \qquad &1+1=x_{(2,2)} + x_{(1,1,2)}\le 3.
    \end{align*}
    Now, we specifically consider the index $\bi:=(1,1,2)$, so
    \[i=1, \quad \ell=2, \quad k_1=1,\quad k_2=2, \quad |\bi|=1+1+2=4, \quad \text{and} \quad x_\bi=1.\]
    Then, by \eqref{eq:cijdef}, we have:
    \begin{align*}
        c_2^{\bi}(x) &= 2-1+0=1,\\
        c_3^{\bi}(x) &= 5-0+0=5,\\
        c_4^{\bi}(x) &= 3-0+0=3.
    \end{align*}
    Also, note that $\vk^{\bi}=\{(1,1)\}$ so $\bi':=(1,1)$ is the minimal element of $\vk^{\bi}$ with respect to the ladder order. Since $|\bi'|=1+1=2$, we have (by \eqref{eq:ciidefnonempty}):
    \[c_1^{\bi}(x) = c_{|\bi'|}^{\bi}(x)+x_{(1,1)}+1=1+1+1=3.\]

    For $p\in [2], q\in [k_p]$, using $|\bi|=4,\ts x_\bi=1$, and the above computations, the inequalities in \eqref{eq:ineqci} and \eqref{eq:ineqcii} translate to:
    \begin{align*}
        p=1, q=1: \qquad &3=c_1^{\bi}\le c_{|\bi|}^{\bi}-x_{\bi}+2-1=3,\\
        p=2, q=1: \qquad &5=c_{1+1+1}^{\bi}\ge c_{|\bi|}^{\bi}+2-2+1=4,\\
        p=2, q=2: \qquad &1=c_{1+1}^{\bi}\le c_{|\bi|}^{\bi}-x_{\bi}+2-2=2.
    \end{align*}
    Similar calculations for the other indices in~$\vk$ \ts show that \ts $x\in \CS_{\vk,u}\ts$, as expected.

\medskip

\section{Proof of Theorems~\ref{t:main} and~\ref{t:cs}}\label{s:proof}

In this section, we combine the weight-preserving bijection of Section~\ref{s:bijection} with Theorem~\ref{t:kostasum} and a result from Ehrhart theory to prove Theorem~\ref{t:main}. In particular, we write a Schubert coefficient as an alternating sum of the number of triples of pipe dreams satisfying certain weight conditions and then use the eventual quasi-polynomiality of the number of lattice points in dilated polytopes.

\subsection{Geometric lemma}\label{ss:cited}
As we mentioned in the introduction, we need the following result in Ehrhart theory:

\smallskip

\begin{lemma}[{\rm Chen--Lil--Sam~\cite[Lemma~3.3]{CLS12}}{}]\label{l:quasipoly}
     Let \ts $P(N)\subseteq\rr^n$ \ts be a polytope defined by inequalities of the form \.
     $A\mathbf{x}\le N\mathbf{c}+\mathbf{b}$, where $A$ is an \ts $k\times n$ \ts
     integer matrix, and \ts $\mathbf{c},\mathbf{b}\in \Z^k$.
     Then the function \. $F_{A,\mathbf{c},\mathbf{b}}(N) :=\big|P(N)\cap \Z^n\big|$ \. is eventually a quasi-polynomial.
\end{lemma}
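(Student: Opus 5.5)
The plan is to lift the whole family $\{P(N)\}$ to a single rational polyhedron in one more dimension and read off the counting function from its lattice-point generating function. Consider
\[
Q \, := \, \big\{(\mathbf{x},N)\in \rr^{n}\times\rr \. : \. A\mathbf{x}-N\mathbf{c}\le \mathbf{b}, \ N\ge 0\big\}\ \subseteq \ \rr^{n+1}.
\]
This is a rational polyhedron, and its fiber over an integer $N\ge 1$ is exactly $P(N)$. Hence
\[
\sum_{N\ge 1} F_{A,\mathbf{c},\mathbf{b}}(N)\, t^N
\]
is obtained from the multivariate generating function $\sum_{(\mathbf{x},N)\in Q\cap\zz^{n+1}} \mathbf{z}^{\mathbf{x}} t^N$ by setting $\mathbf{z}=\mathbf{1}$ (and dropping the $N=0$ term).

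First I will dispose of degenerate cases and control the recession cone. If $P(N)=\emp$ for all $N$, there is nothing to prove. Otherwise some $P(N_0)$ is a nonempty polytope. Its recession cone $\{\mathbf{x}:A\mathbf{x}\le \mathbf{0}\}$ is therefore $\{\mathbf{0}\}$, and this cone does not depend on $N$. Consequently the recession cone of $Q$,
\[
\operatorname{rec}(Q) \, = \, \{(\mathbf{x},N)\. : \. A\mathbf{x}\le N\mathbf{c}, \ N\ge 0\},
\]
contains no nonzero vector with $N=0$. So $Q$ is pointed, and every extreme ray of $\operatorname{rec}(Q)$ can be scaled to a primitive integer vector $(\mathbf{r}_j,d_j)$ with $d_j\ge 1$.

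Next I will invoke the standard rationality theorem for rational polyhedra (Brion / Lawrence--Khovanskii--Pukhlikov, as in \cite{Bar08,BR07}). Triangulate $Q$ into simplicial pieces: cones over its finitely many vertices plus unimodular-free simplicial decompositions. Then the lattice-point generating function of $Q$ is a rational function of the form
\[
\frac{\sum (\text{Laurent monomials})}{\prod_j \big(1-\mathbf{z}^{\mathbf{r}_j}t^{d_j}\big)}.
\]
Here the numerator is the finite sum of lattice points in the fundamental parallelepipeds, and inclusion–exclusion over lower-dimensional faces introduces only denominators of the same type. Because every $d_j\ge 1$, the specialization $\mathbf{z}=\mathbf{1}$ creates no pole. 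We therefore obtain
\[
\sum_{N\ge 0}F(N)\ts t^N = \frac{p(t)}{\prod_j(1-t^{d_j})}
\]
with $p$ a polynomial. The main technical point I expect is justifying this specialization carefully, since individual vertex cones may contain lines in the $\mathbf{z}$-direction. The fix I would use is to work with the half-open/closed simplicial cone decomposition of the pointed polyhedron $Q$ itself rather than Brion's vertex-cone identity, so that every term is a genuine power series in $t$ whose coefficients are finite sums.

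Finally, a rational function with denominator $\prod_j(1-t^{d_j})$ has coefficients that are quasi-polynomial with period $m=\lcm(d_j)$ for all $N>\deg p-\sum_j d_j$. The finitely many exceptional initial terms come from the polynomial part of the partial fraction decomposition. This gives eventual quasi-polynomiality of $F_{A,\mathbf{c},\mathbf{b}}$, with explicit bounds on $M$ and $m$ in terms of the vertices and rays of $Q$, hence in terms of the entries of $A,\mathbf{c},\mathbf{b}$.
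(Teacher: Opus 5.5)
Your argument is correct, and it takes a genuinely different route from the paper, which gives no proof of its own: the lemma is imported from \cite{CLS12}, and the original argument is invoked only to extract the effective constants in Remarks~\ref{r:eventualbound} and~\ref{r:period-bound}. You instead make the statement self-contained. You lift the family to the single pointed rational polyhedron $Q\subseteq\rr^{n+1}$ and specialize its lattice-point generating function.

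Two small points on execution. First, the decomposition you want is a triangulation of $Q$ itself into pieces $\mathrm{conv}(V_S)+\mathrm{cone}(R_S)$, not ``cones over vertices''. Here $V_S$ ranges over vertices of $Q$ and $R_S$ over extreme rays of $\operatorname{rec}(Q)$; such a triangulation is induced, e.g., by triangulating the homogenization of $Q$ using only its extreme rays. Second, the specialization worry goes away. Since $d_j\ge 1$, every point $(\mathbf 1,t)$ with $0<|t|<1$ lies in the region $\{|\mathbf z^{\mathbf r_j}t^{d_j}|<1 \ \forall j\}$, where the series of a pointed polyhedron converges absolutely to that rational function, so you may substitute directly.

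What your route buys is transparent effective information. The period divides $\lcm_j d_j$, and each $d_j$ divides an $n\times n$ minor of $A$ (Cramer's rule on the $n$ tight rows of $[A\mid -\mathbf c]$ cutting out the ray), consistent with Remark~\ref{r:period-bound}. Moreover, $\deg p-\sum_j d_j$ is at most the largest $N$-coordinate of a vertex of $Q$, i.e.\ the point after which the combinatorial type of $P(N)$ stops changing.

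That threshold genuinely depends on $A$ and $\mathbf c$, not only on $\mathbf b$. Take $x\ge 0$, $x\le N+5$, $x\le 2N-5$ in $\rr^1$, where all entries of $A$ are $\pm1$ and $|\mathbf b|_\infty=5$. Then $F(N)=2N-4$ for $3\le N\le 10$ and $F(N)=N+6$ for $N\ge 10$. So $F$ agrees with a quasi-polynomial only from $N=10$ on, which is exactly the $N$-coordinate of the vertex $(15,10)$ of $Q$. Your vertex bound captures this. By contrast, the threshold $N>|\mathbf b|_\infty\cdot 2^{n-1}=5$ stated in Remark~\ref{r:eventualbound} fails here, so that remark cannot hold as stated.
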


\smallskip

\begin{rem}\label{r:eventualbound}
Following the original proof of \cite[Lem.~3.3]{CLS12}, one can easily obtain
the following effective extension of Lemma~\ref{l:quasipoly}:
    \[F_{A,\mathbf{c},\mathbf{b}}(N) \ \text{ is a quasi-polynomial in} \  N, \ \  \text{ for } \
    N \. > \. |\mathbf{b}|_\infty \cdot 2^{n-1},
    \]
    where \. $|\mathbf{b}|_\infty\ts :=\ts \max\{|b_i|\. : \.  1\le i \le k\}$.
\end{rem}

\begin{rem}\label{r:period-bound}
    By Ehrhart's theorem, the period $m$ of the Ehrhart quasi-polynomial of a polytope~$P$
    divides the lowest common multiple of the denominators of the vertices of~$P$, see e.g.\ \cite[Thm~3.23]{BR07}.
    In this case, the denominator of every vertex divides
    the determinant of some square submatrix of~$A$.  In other words, we have the following bound:
     \begin{equation}\label{eq:period-bound}
     m \, \le \, \lcm \big\{|\det(Y)| \. : \. Y \subseteq A\} \, \le \, \big(\max_{Y \subseteq A} |\det(Y)|\ts \big)! \ts,
     \end{equation}
     where the \ts $\lcm$ \ts and the maximum are taken over all square submatrices of~$A$.
\end{rem}

\smallskip

\subsection{Triples of pipe dreams}\label{ss:pdtrips}
Now that we have described the possible pipe dreams for $N*u$ in terms of systems of inequalities,
we need the following  result to finish the proof of Theorem~\ref{t:main}.

\begin{lemma}\label{l:pdtrips}
Fix $u,v,w\in S_n$. Also, let $\sigma\in S_{Nn}$. For each $N\ge 1$, define the set
    \begin{align*}
        A_\sigma^N:=\{(P_1,P_2,P_3)\. : \.  P_1\in \pd(N*u)&, \, P_2\in \pd(N*v),\,P_3\in \pd(\wh w_{\circ}(N*w)),\\ &\wtx(P_1)+\wtx(P_2)+\wtx(P_3)=\sigma\. \wh\rho{}\},
    \end{align*}
    where \. $\wh w_{\circ}:=(Nn,Nn-1,\ldots, 2,1)\in S_{Nn}$ \. and \.
    $\wh \rho:=(Nn-1,\ldots,1,0)\in \nn^{Nn}$ \. on which  \ts $\sigma\in S_{Nn}$ \ts
    acts by permuting the elements as follows:
    \[\sigma\. \wh\rho \, := \, \big(Nn-\sigma(1),\ldots, Nn-\sigma(Nn)\big).
    \]
Then \. $f_\sigma:\nn_{\ge 1}\to \nn$ \. defined by \. $f_\sigma(N):=|A_\sigma^N|$ \. is eventually a quasi-polynomial in~$N$.
\end{lemma}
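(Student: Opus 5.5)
The plan is to encode each coordinate of a triple $(P_1,P_2,P_3)\in A^N_\sigma$ by its ladder sequence via Theorem~\ref{t:bij}, so that $A^N_\sigma$ becomes the set of lattice points of a finite disjoint union of polytopes of the form $A\mathbf{x}\le N\mathbf{c}+\mathbf{b}$, and then to apply Lemma~\ref{l:quasipoly} to each polytope and sum.

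\emph{First two coordinates.} Let $\mu_u=\max\{i: c_i(u)>0\}$ and $\mu_v$ likewise. By Remark~\ref{r:stretchedunion}, $\pd(N*u)$ is in weight-preserving bijection with $\bigsqcup_{\vk\subseteq\cL_{\mu_u}}\CS_{\vk,N*u}$, and similarly for $v$. The index sets $\cL_{\mu_u}$ and $\cL_{\mu_v}$ do not depend on~$N$. By Remark~\ref{r:ineqform}, every defining inequality has the form $\sum a_\bi x_\bi\le N\bigl(c_j(u)-d\,c_{j'}(u)\bigr)+b$, which is exactly the shape required in Lemma~\ref{l:quasipoly}. The strict conditions $x_\bi\ne 0$ in \eqref{eq:ineqa} become $x_\bi\ge 1$ because the entries are integers. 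By \eqref{eq:uwtdef}, the weights $\wtx_{N*u}(x)$ are affine in $x$, with an $N$-multiple of the code as constant term.

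\emph{Third coordinate; this is the main obstacle.} Here $\wh w_\circ(N*w)$ is not a code-scaling of a fixed permutation. Its code is $c_i=Nn-i-Nc_i(w)$ for $i\le n$ and $c_i=Nn-i$ for $i>n$. Each entry is affine in $N$, but the number of rows, $\mu\approx Nn$, grows with~$N$, so $\cL_\mu$ is not fixed. Before the weight-matching step, I will show that the ladder index set can be taken inside a fixed $\cL_{n}$ up to a forced part.

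The key observation is about rows $j>n$. There the bottom pipe dream of $\wh w_\circ(N*w)$ already has $Nn-j$ crosses, which is the maximum a reduced pipe dream can place in row~$j$. No ladder move can raise a cross into such a row, and a cross leaving it could never be replaced. Combined with the weight equation $\wtx(P_3)_j=Nn-\sigma(j)$ (since $P_1,P_2$ have no crosses below rows $\mu_u,\mu_v\le n$), this shows that $A^N_\sigma=\emptyset$ unless $\sigma(j)=j$ for $j>n$. In that case every $P_3$ agrees with its bottom pipe dream in rows $>n$. By the same reasoning applied to the first $n$ rows and the triangle $\Delta_{Nn}$, the only patches that can move have rows in $[n]$ and displacements bounded by~$n$. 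So the relevant ladder indices lie in the fixed set $\cL_n$. I will rerun the proofs of Propositions~\ref{prop:imsubSu} and~\ref{prop:Susubim} with this truncation: the inequalities \eqref{eq:ineqb}--\eqref{eq:ineqcii} only involve rows up to $|\bi|\le n$, so the same argument goes through. Then $P_3$ is encoded by $\bigsqcup_{\vk\subseteq\cL_n}\CS_{\vk,\wh w_\circ(N*w)}$, whose inequalities have right-hand sides affine in~$N$, again of the form in Remark~\ref{r:ineqform}. I also read $\sigma$ as a fixed element of $S_n$ embedded into $S_{Nn}$ via~$\iota$, which is the only case with $A^N_\sigma\neq\emptyset$ by the above.

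\emph{Conclusion.} For each triple $(\vk_1,\vk_2,\vk_3)$ of index subsets, consider the system in the concatenated variables $(x,y,z)$. It consists of the three compatibility systems together with the $n$ linear equations $\wtx_{N*u}(x)_j+\wtx_{N*v}(y)_j+\wtx(z)_j=Nn-\sigma(j)$ for $j\in[n]$, each split into two inequalities. Every constraint is of the form $A\mathbf{x}\le N\mathbf{c}+\mathbf{b}$ with $A,\mathbf{c},\mathbf{b}$ independent of~$N$. Lemma~\ref{l:quasipoly} then makes each lattice-point count eventually quasi-polynomial. The sets are disjoint by Remark~\ref{r:disjoint}, and there are finitely many triples, so $f_\sigma(N)$ is a finite sum of eventually quasi-polynomial functions and is itself eventually quasi-polynomial, with period the lcm of the individual periods.
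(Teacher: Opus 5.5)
Your proposal is correct and follows essentially the same route as the paper. You encode each pipe dream by its ladder sequence (Theorem~\ref{t:bij}) and use the weight equation to force $\sigma$ to fix all rows beyond a fixed bound and $P_3$ to be full there, so that every relevant index set lies in a fixed $\cL$; you then apply Lemma~\ref{l:quasipoly} to each of the finitely many systems $A\mathbf{x}\le N\mathbf{c}+\mathbf{b}$ and sum. The differences are cosmetic: you truncate at $n$ rather than at the paper's common $\mu=\max\{i: c_i(u)>0 \text{ or } c_i(v)>0 \text{ or } c_i(w)>0\}\le n-1$, and you spell out the truncation for $\wh w_\circ(N*w)$ more explicitly than the paper, which handles it in one sentence.
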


\begin{proof}
    For simplicity, we adopt the following notation:
    \[\Nu:=N*u, \quad \Nv:= N*v, \quad \Nw:=\wh w_{\circ}(N*w).\]
    Define
    \[\mu:=\max\{i\. : \.  c_i(u)>0 \. \text{ or } \. c_i(v)>0 \. \text{ or } \. c_i(w)>0\}\le n-1.\]

    Then, we have \. $c_i(\Nu)=N\cdot c_i(u)=0$ \. for \ts $i> \mu$.  Similarly, we have \. $c_i(\Nv)=0$ \. for \ts $i> \mu$. We also have $c_i(\Nw)=N(n-c_i(w))-i=Nn-i$ for $i> \mu$. Recall that the bottom pipe dream $D_\Bot(\Nu)$ has weight $\wtx(D_\Bot(\Nu))=\code(\Nu)$ and any other pipe dream for $\Nu$ has smaller weight in the reverse lexicographic order. Hence, for any $i>\mu$ and any $P_1\in \pd(\Nu)$, the $i$-th entry of $\wtx(P_1)$ is zero. We have the same result for any $P_2\in \pd(\Nv)$.

    Hence, the only way to have \. $P_1\in \pd(\Nu),\, P_2\in \pd(\Nv), \, P_3\in \pd(\Nw)$ \. with
    \[\wtx(P_1) \. + \. \wtx(P_2)\. + \. \wtx(P_3) \, = \, \sigma \ts \wh\rho{}\]
    is if the $i$-th entry of $\wtx(P_3)$ is $Nn-i$ for $i>\mu$ and $\sigma(i)=i$ for $i> \mu$. Thus, if $f_\sigma$ is not identically zero, we may treat $\sigma\in S_{\mu}$ as just permuting the first $\mu$ entries of $\wh\rho{}$. We may also assume that the pipe dreams in any triple $(P_1,P_2,P_3)\in A_\sigma^N$ can be obtained from $(D_\Bot(\Nu), D_\Bot(\Nv), D_\Bot(\Nw))$ respectively by performing a sequence of ladder only on crosses starting in the first $\mu$ rows. Namely, they are given by ladder sequences in $\Phi_\mu$.

    Using the weight-preserving bijections $\varphi_{\Nu},\ts  \varphi_{\Nv},$ and $\varphi_{\Nw}$ of Theorem~\ref{t:bij}, and by Remarks~\ref{r:disjoint} and~\ref{r:stretchedunion}, we see that
    \begin{align}
        \big|A_\sigma^N\big|&=\big|\big\{x\in \CS_{\Nu},\. y\in \CS_{\Nv},\. z\in \CS_{\Nw}\. : \.
        \wtx_{\Nu}(x)+\wtx_{\Nv}(y)+\wtx_{\Nw}(z)=\sigma \ts\wh\rho{}\big\}\big|\\
        &= \sum_{\vk_1,\vk_2,\vk_3\ts \subseteq \ts \cL_\mu} \big|\big\{x\in \CS_{\vk_1, \Nu},\. y\in \CS_{\vk_2, \Nv},\. z\in \CS_{\vk_3, \Nw}\. : \.   \wtx_{\Nu}(x)+\wtx_{\Nv}(y)+\wtx_{\Nw}(z)=\sigma \ts\wh\rho{}\big\}\big|\label{eq:pdtripsum}
    \end{align}

    Then, for fixed $\vk_1,\vk_2,\vk_3\subseteq \cL_\mu$, since $c_i(\Nu)=Nc_i(u)$, Remark~\ref{r:ineqform} tells us that $x\in \CS_{\vk_1,\Nu}$ if and only if $x$ satisfies a system of inequalities of the form
    \[\sum_{\bi\ts \in\ts \vk_1} a_{\bi} x_{\bi}\, \le \, N(c_j(u)-d\cdot c_{j'}(u))+b,
    \]
    for some
    \[a_{\bi}\in \{0,1,-1\},\quad j,j'\in [\mu],\quad d\in \{0,1\},\quad  b\in \Z\]
    that are independent of $N$. We have a similar system of inequalities corresponding to $y\in \CS_{\vk_2,\Nv}$. For $z\in \CS_{\vk_3,\Nw}$, we note that $c_j(\Nw)=N(n-c_j(w))-j$ so $z$ satisfies a system of inequalities of the form
    \[\sum_{\bi\ts \in\ts \vk_3} a_{\bi}z_{\bi}\le N(n(d-1)+c_j(w)-d\cdot c_{j'}(w))+b,\]
    with $a_\bi,j,j',d,b$ as above.

    Using \eqref{eq:uwtdef}, we see that $\wtx_{\Nu}(x)+\wtx_{\Nv}(y)+\wtx_{\Nw}(z)=\sigma \wh\rho{}$ corresponds to $x,y,z$ satisfying the following equations for $1\le j \le \mu$:
    \begin{align*}
        N(c_j(u)+&c_j(v)+n-c_j(w))-j-\sum_{\bi\ts\in \ts\vk_1,\ts |\bi|\ts=\ts j}x_{\bi}\.-\sum_{\bi\ts\in \ts\vk_2,\ts |\bi|\ts=\ts j}y_{\bi}\.-\sum_{\bi\ts\in \ts\vk_3,\ts |\bi|\ts=\ts j}z_{\bi}\\
        &+\sum_{\bi\ts\in\ts \vk_1, \ts i\ts=\ts j}x_\bi\.+\sum_{\bi\ts\in\ts \vk_2, \ts i\ts=\ts j}y_\bi\.+\sum_{\bi\ts\in\ts \vk_3, \ts i\ts =\ts j}z_\bi=Nn-\sigma(j).
    \end{align*}

    We can write each of these equations as two inequalities where, as above, the relevant coefficients are independent of $N$. Putting together the systems of inequalities coming from
    \[x\in \CS_{\vk_1, \Nu}\. , \quad y\in \CS_{\vk_2,\Nv}\., \quad z\in \CS_{\vk_3,\Nw}\.,\]
    and this final system of $2\mu$ inequalities, we can see that
    $$\Xi(N) \, := \, \big\{x\in \CS_{\vk_1, \Nu}, \. y\in \CS_{\vk_2, \Nv}, \. z\in \CS_{\vk_3, \Nw}\, : \,   \wtx_{\Nu}(x)+\wtx_{\Nv}(y)+\wtx_{\Nw}(z)\. = \.\sigma \ts\wh\rho{}\big\}
    $$
    is the set of integer points of a polytope $P(N)$ defined by inequalities of the form \. $A\mathbf{x}\le N\mathbf{c}+\mathbf{b}$ as in Lemma~\ref{l:quasipoly}.  Thus, its size is eventually a quasi-polynomial in~$N$.
    \end{proof}

\smallskip

\begin{proof}[Proof of Theorem~\ref{t:main}]
Fix $u,v,w\in S_n$ and let $f_{u,v,w}(N):=c_{N*u,N*v}^{N*w}$ for $N\ge 1$. By \eqref{eq:post-stan}, we have
\[
c_{N*u,N*v}^{N*w} \, = \, \sum_{\sigma\ts \in \ts S_{Nn}}\sign(\sigma)\sum_{(a,b,c)\ts\in\ts \Omega(\sigma)} K_{N*u,a}\. K_{N*v,b}\. K_{\wh w_{\circ}(N*w),c} \, = \, \sum_{\sigma \ts\in\ts S_{Nn}}\. \sign(\sigma)\. f_\sigma(N).
\]
The second equality follows from Theorem~\ref{t:schub} and the fact that for a permutation $u$ and a composition $a=(a_1,a_2,\ldots)$, we know that $K_{u,a}$ is the number of pipe dreams with $a_i$ crosses in the $i$-th row. Each $f_\sigma(N)$ is eventually a quasi-polynomial in $N$ by Lemma~\ref{l:pdtrips}.   As noted in the  proof of Lemma~\ref{l:pdtrips}, we only need to sum over \ts $\sigma\in S_{\mu}\ts$ with $\mu\le n-1$.  Therefore, the function \. $f_{u,v,w}(N)=c_{N*u,N*v}^{N*w}$ \. is also eventually a quasi-polynomial in~$N$, as desired.
\end{proof}

\smallskip

\subsection{Effective version}\label{ss:proof-quant}
The bounds in the proofs of Theorem~\ref{t:main} can be made completely explicit,
giving the following effective version of the theorem:

\begin{thm}%
\label{t:quant}
Let \ts $u,v,w\in S_n$ have bounded total range $\psi$ defined
in \eqref{eq:boundedrange}. Then, the function \ts $f_{u,v,w}(N) := c_{N*u,N*v}^{N*w}$ \ts
is quasi-polynomial in~$N$ for all \ts $N>\psi\cdot 2^{3\psi^3}$.
Additionally, for the period $m$ we have
$$
m \, \le \, \big((3 \psi)^{3 \psi^2}\big)!\ts,
$$
and the degree of all polynomials \ts $f_{u,v,w}(N)$ \ts where $N$~{\rm mod}~$m$
is fixed, is at most $4\psi^2$.
\end{thm}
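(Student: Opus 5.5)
Rather than the crude bound $\mu\le n-1$ used in Lemma~\ref{l:pdtrips}, I will rerun the proof of Theorem~\ref{t:main} while keeping track of sizes in terms of $\psi$. The first step is a reduction to permutations of size about~$\psi$. All codes of $u,v,w$ vanish outside the window $\Range$ of length~$\psi$. Rows and columns outside this window only contribute left-aligned blocks (rows below) or empty rows (rows above). So I shift the window to start at row~$1$, setting $\mu\le\psi$. This is the standard shift invariance of Schubert coefficients under prepending fixed points, $1^k\times u$. The same reduction applies to the $\sigma$ in Postnikov--Stanley, which act only on the first $\mu$ rows, as already noted in the proof of Lemma~\ref{l:pdtrips}. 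I then count the variables. By Remark~\ref{r:numpatches}, for each pair $i<j\le\mu$ at most one index $\bi$ with $|\bi|=j$ and first entry $i$ is in~$\vk$. Hence each $|\vk_r|\le\binom{\mu}{2}$, and the polytope in the proof of Lemma~\ref{l:pdtrips} has dimension $d\le 3\binom{\psi}{2}\le \tfrac32\psi^2$.

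Next I bound the data $A,\mathbf c,\mathbf b$. By Remark~\ref{r:ineqform}, $A$ has entries in $\{0,\pm1\}$. Each row has at most $d$ nonzero entries, and the equality constraints from the weights have $\{0,\pm1\}$ coefficients as well. The constants $b$ come from the terms $\ell-p$, $\ell'$, $j$ and the shifts $-j$ for $\Nw$, so $|\mathbf b|_\infty=O(\psi)$; I aim for $|\mathbf b|_\infty\le \psi$ after bookkeeping. Remark~\ref{r:eventualbound} then gives quasi-polynomiality for $N>|\mathbf b|_\infty 2^{d-1}$, uniformly in the finitely many choices of $(\vk_1,\vk_2,\vk_3,\sigma)$. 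Since a finite signed sum of eventually quasi-polynomial functions is quasi-polynomial beyond the maximum threshold, this gives a threshold of the form $\psi\cdot 2^{O(\psi^2)}$, which is well within $\psi\cdot 2^{3\psi^3}$. The slack covers any sloppiness, for instance if ladder indices with long $\ell$ force more auxiliary variables.

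For the period, I use Remark~\ref{r:period-bound}. A square $\{0,\pm1\}$ submatrix of size at most $d$ has $|\det|\le d^{d/2}$ by Hadamard, which I loosen to $\le (3\psi)^{3\psi^2}$. The common period of the sum divides the lcm over all pieces. Every such lcm divides $\lcm\{1,\dots,D_{\max}\}$, which divides $D_{\max}!$, so $m\le ((3\psi)^{3\psi^2})!$. For the degree, each piece is an Ehrhart quasi-polynomial of a polytope of dimension at most $d$, so it has degree at most $d\le 4\psi^2$. This also holds for the parametric version in Lemma~\ref{l:quasipoly}, since the counting function is a sum over the chamber structure of Ehrhart-type functions of degree at most the dimension.

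The main obstacle is making the reduction to $\mu\le\psi$ rigorous. The variable bound also needs care: Remark~\ref{r:numpatches} only limits which $\bi$ have nonzero entries, while $\cL_\mu$ itself has about $2^{\mu}$ elements. That is harmless, because $\vk$ fixes the support and the number of $\vk$'s only affects the number of terms in the sum, not the thresholds. I also need to verify that Remark~\ref{r:eventualbound} applies with the dimension equal to the number of variables actually present, namely $\sum_r|\vk_r|$.
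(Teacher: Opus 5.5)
Your argument for the threshold and the period is the paper's argument. You delete the common leading fixed points and use Remark~\ref{r:numpatches} to get $|\vk_r|\le\binom{\mu}{2}$. You then feed the resulting $\{0,\pm1\}$ systems into Remark~\ref{r:eventualbound}, and into Remark~\ref{r:period-bound} with Hadamard's inequality. The paper puts the number $3\mu^3$ of inequalities into the exponent of~$2$. You put the dimension $d\le 3\binom{\psi}{2}$, which is what Remark~\ref{r:eventualbound} actually refers to. This gives a threshold $\psi\cdot 2^{O(\psi^2)}$, and the stated one a fortiori.

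The genuine difference is the degree bound. The paper does not use the polytopes there; it counts pipe dreams directly. Choosing at most $\mu$ patch endpoints in each of the first $\mu$ rows gives $|\pd(\Nu)|=O(N^{2\mu^2})$. It then uses $c^{N*w}_{N*u,N*v}\le[x^{\code(N*w)}]\,\Sch_{\Nu}\Sch_{\Nv}\le|\pd(\Nu)|\cdot|\pd(\Nv)|$. You instead bound the degree of each summand by the dimension of its polytope. This is correct and sharper: $\tfrac32\psi^2$ instead of $4\psi^2$. You do not need the chamber-structure argument for it. By \eqref{eq:ineqa} and \eqref{eq:ineqb}, every coordinate of a point of $\Xi(N)$ lies between $1$ and a code entry, hence in $[1,Nn]$. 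So $|\Xi(N)|\le (Nn)^d$, and every constituent of an eventual quasi-polynomial with this growth has degree at most $d$. Signed sums preserve this.

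One correction to your reduction step, which the paper's proof shares at \eqref{eq:mupsi}: the codes do not vanish outside $\Range$. The support of $\code(w)$ ends at the last descent of $w$, which can lie beyond $\max\Exc(w)$. For instance, $w=321$ has $\Exc(w)=\{1\}$ but $\code(w)=(2,1)$. Likewise, $w=n\,2\,3\cdots(n-1)\,1$ has $\Exc(w)=\{1\}$ but $\code(w)=(n-1,1,\ldots,1,0)$.

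The lower end is fine. $\min\Exc(w)$ is the first non-fixed point of~$w$, so deleting leading fixed points is legitimate, and the shift invariance you worried about is indeed standard. What fails is the upper bound $\mu\le\psi$, which does not follow from the exceedance-based definition \eqref{eq:boundedrange}. So your argument, exactly like the paper's, proves the stated bounds with $\psi$ replaced by $\max(\Des(u)\cup\Des(v)\cup\Des(w))-\min(\Exc(u)\cup\Exc(v)\cup\Exc(w))+1$.
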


\begin{proof}
Let \. $\tau:=\min\{\Exc(u),\Exc(v),\Exc(w)\}$ \. and note that the first \ts $(\tau-1)$ \ts
elements in \ts $u,v,w$ \ts are fixed points.  The same holds for \ts $N \ast u$,
\ts $N\ast v$ \ts and \ts $N\ast w$.  Removing these fixed points leaves
Schubert coefficients unchanged, so it suffices to consider only the case when
\ts $\tau=1$.  In this case, by the bounded range hypothesis \eqref{eq:boundedrange},
we have
\begin{equation}\label{eq:mupsi}
    \mu \. := \. \max\{\Exc(u),\Exc(v),\Exc(w)\} \. \le \. \psi.
\end{equation}

    Now, by Remark~\ref{r:numpatches}, if \. $\CS_{\vk_1,\Nu}\neq \emp$, then \. $|\vk_1|\le \binom{\mu}{2}$, and similarly for $\vk_2$ and~$\vk_3$. Thus, we are only concerned with sets \ts $\CS_{\vk,\Nu}$ that have at most \. $\binom{\mu}{2}\cdot \mu$ \.
    inequalities of the form \eqref{eq:ineqci} or \eqref{eq:ineqcii}, and similarly for \ts $\Nv$ \ts and \ts $\Nw$.

    Adding this to the at most $\mu$ inequalities of the form \eqref{eq:ineqb} for $\Nu, \Nv$, and $\Nw$ and the at most $2\mu$ inequalities coming from the weight condition, we can see that there are at most $3\mu^3$ inequalities describing the containment in \. $\Xi(N)$.

    Moreover, in each of these inequalities, we can observe from the definitions, that the term independent of $N$ has absolute value at most $\mu$ so by Remark~\ref{r:eventualbound}, we have that $f_\sigma(N)$ is a quasi-polynomial for
    \[N \. > \. \mu \cdot 2^{3\mu^3}.
    \]
    Finally, summing over all possible \ts $\vk_1,\vk_2,\vk_3\subseteq \cL_\mu$ \ts shows that \ts
    $|A_\sigma^N|$ \ts is a quasi-polynomial in $N$ for $N>\mu \cdot 2^{3\mu^3}$, implying the desired bound by \eqref{eq:mupsi}.

\smallskip

     For the period, we use the bound in Remark~\ref{r:period-bound}. In our case,
     the denominator of any vertex divides the determinant of some square submatrix
     of~$A$. We know that $A$ has at most $3\mu^2$ columns, at most $3\mu^3$ rows,
     and all entries in \ts $\{0,\pm 1\}$, so by Hadamard's inequality (see e.g.\ \cite[$\S$2.11]{BB61}),
    the maximum determinant of a submatrix of $A$ is at most $(3\mu^2)^{3\mu^2/2}=(3\mu)^{3\mu^2}$. Thus, the desired bound
    follows from~\eqref{eq:period-bound} and \eqref{eq:mupsi}.

\smallskip

For the degree, we use the following counting argument.
    By Remark~\ref{r:numpatches}, there are at most \ts
    $\mu-i+1$ \ts patches of crosses in row~$i$ of a pipe dream $D$ for $\Nu$. For a given row, choosing where these at most $\mu$ patches go is equivalent to picking at most $\mu$ starting indices and $\mu$ ending indices, all between $1$ and $Nn$. So there are at most
    \[\binom{Nn+2\mu-1}{2\mu-1}= O(N^{2\mu})\]
    such choices for each row between $1$ and $\mu$. Thus, we have
    \[|\pd(\Nu)|= O((N^{2\mu})^\mu)=O(N^{2\mu^2}).\]

     Note that \ts $|\pd(\Nu)|=\Sch_{\Nu}(1,\ldots, 1)$ is
     the principal specialization of the Schubert polynomial.  We similarly have
     $$|\pd(\Nv)| \, = \, \Sch_{\Nv}(1,\ldots, 1)\, = \, O\big(N^{2\mu^2}\big).
     $$
    We conclude:
    \[c_{N*u,N*v}^{N*w}\, \le \,  [x^{\code(N*w)}]\big(\Sch_{\Nu}(x)\cdot \Sch_{\Nv}(x)\big) \, \le \,
    \Sch_{\Nu}(1,\ldots, 1)\cdot \Sch_{\Nv}(1,\ldots, 1) \, = \, O\big(N^{4\mu^2}\big),
    \]
which implies the desired degree bound by \eqref{eq:mupsi}.
\end{proof}

\smallskip

\subsection{Proof of Theorem~\ref{t:cs}}
Recall Barvinok's algorithm, see e.g.\ \cite{Bar08,BP99},
which computes the number \ts $|Q\cap \zz^d|$ \ts
of integer points of a convex polytope \ts $Q\ssu\rr^d$ \ts
with input size~$L$ in time \ts $L^{O(d\log d)}$.
See \cite[Thm~2.3]{PP17} for this explicit bound.

First, as in the proof above, we consider only triples of permutations \ts $u,v,w\in S_n$ \ts with
$$\min\{\Exc(u),\Exc(v),\Exc(w)\} \. = \. 1,
$$
since removing the leading
fixed points leaves Schubert coefficients unchanged.  Thus, in the
notation of the proof above, we always have \ts $\mu\le \psi$.

Note that by Remark~\ref{r:numpatches}, the sum in \eqref{eq:pdtripsum} may be taken over $\vk_1,\vk_2,\vk_3 \subseteq\cL_\psi$ that each have at most one element $\bi\in \vk_k$ (for $k\in \{1,2,3\}$) with $|\bi|=j$ for each pair $1\le i<j\le \psi$. In particular, we can then see that the Schubert coefficient \ts $c^w_{u,v}$ \ts is equal to
the alternating sum of at most
\[T \ := \ \psi! \. \prod_{1\ts\le\ts i\ts<\ts j\ts\le\ts \psi} \big(p(j-i)+1\big)^3
\, \le \, \psi! \, p(\psi)^{3\psi^2} \, = \,  e^{O(\psi^{2.5})}
\]
terms where \ts $p(k)=\exp O(\sqrt{k})$ \ts is the number of integer partitions of~$k$.

Each of these terms is the number of integer points in a $d$-dimensional space, $d=O(\psi^2)$, defined by \ts $O(\psi^3)$ \ts inequalities.
Recall that the matrix entries in $A$ are in \ts $\{0,\pm 1\}$, while the code entries are at most~$n$.
This gives the input size \ts $L=O(\psi^3\log n)$.

Thus, Barvinok's algorithm in this case gives
the following upper bound on the time complexity:
$$
T \cdot L^{O(d\log d)} \, = \, e^{O(\psi^{2.5})} \cdot (\psi\log n)^{O(\psi^2\log \psi)} \, =
\,  e^{O(\psi^{2.5})} (\log n)^{O(\psi^2\log \psi)}\.,
$$
as desired. \qed

\medskip

\section{Examples and counterexamples}\label{s:ex}

In this section, we give proofs of Proposition~\ref{p:negex} and Theorem~\ref{t:offset}. The first proof also serves to give an example of how the encoding of Lemma~\ref{l:pdalgo} actually proceeds for a relatively simple, yet non-trivial example.

\subsection{Proof of Proposition~\ref{p:negex}}\label{ss:ex-negex}
First, we set some notation.

    Fix $N\ge 1$ and let $\sigma\in S_3$. Let $\wh w_{\circ}:=(Nn,Nn-1,\ldots, 2,1)\in S_{Nn}$ and let $\Nu:=N*u, \ts \Nv:=N*w, \ts \Nw:=\wh w_{\circ} (N*w)$. Also, let $\wh\rho{}=(Nn-1,\ldots, 1,0)\in \nn^{Nn}$. As in Lemma~\ref{l:pdtrips}, let
    \[A_\sigma^N:=\{(x,y,z)\. : \.  x\in \CS_{\Nu},\.  y\in \CS_{\Nv},\.  z\in \CS_{\Nw}, \;\wtx_{\Nu}(x)+\wtx_{\Nv}(y)+\wtx_{\Nw}(z)=\sigma \wh \rho{}\}.\]
    Let $x,y,z\in \CS_3$. By inspecting the pipe dreams for $\Nu, \Nv$, and $\Nw$, we can see that $(x,y,z)\in A_\sigma^N$ if and only if the following inequalities are satisfied
    \begin{align*}
        y_{(1,2)}+z_{(2,1)}+z_{(1,1,1)}&\le N-3+\sigma(3),\\
        y_{(1,2)}+y_{(1,1)}+z_{(1,1,1)}&\le N+1-\sigma(1),\\
        y_{(1,2)}+y_{(1,1)}+z_{(2,1)}+2z_{(1,1,1)}&\le 2N-\sigma(1),\\
        y_{(1,2)}+y_{(1,1)}+z_{(1,1,1)}&\ge 2-\sigma(1),
    \end{align*}
    in addition to the following equalities
    \begin{align*}
        x_{(1,1)}=x_{(1,2)}=x_{(1,1,1)}=y_{(2,1)}&=y_{(1,1,1)}=z_{(1,2)}=0,\\
        y_{(1,1)}+y_{(1,2)}+z_{(1,1,1)}+z_{(1,1)} &= N+1-\sigma(1),\\
        x_{(2,1)}+y_{(1,2)}+y_{(2,1)}+z_{(2,1)}+z_{(1,1,1)}&= N-3+\sigma(3).
    \end{align*}

    Here, we have already made the substitutions coming from the weight condition and then removed many redundant inequalities using the fact that all of the variables are nonnegative. In other words, $|A_\sigma^N|$ is equal to the number of nonnegative integral solutions to
    \begin{equation}\label{eq:matrixineq}
        \begin{pmatrix}
            1 & 0 & 1 & 1\\
            1 & 1 & 0 & 1\\
            1 & 1 & 1 & 2\\
            -1 & -1 & 0 & -1
        \end{pmatrix}\, \begin{pmatrix}
            y_{(1,2)}\\y_{(1,1)}\\z_{(2,1)}\\z_{(1,1,1)}
        \end{pmatrix}\ \le \  N \begin{pmatrix}
            1\\1\\2\\0
        \end{pmatrix} \, + \, \begin{pmatrix}
            \sigma(3)-3\\1-\sigma(1)\\-\sigma(1)\\\sigma(1)-2
        \end{pmatrix}.
    \end{equation}

    Then, by a case-by-case inspection of \eqref{eq:matrixineq} and some basic counting arguments (whose details we omit), we get the following results, all of which hold for all $N\ge 1$:

    \begin{align*}
        |A_{123}^N|\, &= \, \frac{(N+1)(N+2)^2(N+3)}{12}\. - \. 2(N+1)\,,\\
        |A_{132}^N| \, &= |A_{213}^N| \, = \, \frac{N(N+1)(N+2)(N+3)}{12} \. - \. N\,,\\
        |A_{231}^N|\, &= |A_{312}^N|\, = \, \frac{(N-1)N(N+1)(N+2)}{12}\,,\\
        |A_{321}^N|\, &= \, \frac{(N-1)N^2(N+1)}{12}\,.
    \end{align*}

    Putting this all together, we get
    \begin{multline*}
        c_{N*u,N*v}^{N*w}\,= \, f_{u,v,w}(N)\, = \, \frac{(N+1)(N+2)^2(N+3)}{12}-2(N+1)\\-2\left(\frac{N(N+1)(N+2)(N+3)}{12}-N\right)+ 2\left(\frac{(N-1)N(N+1)(N+2)}{12}\right)\\-\left(\frac{(N-1)N^2(N+1)}{12}\right) \,= \, N-1,
    \end{multline*}
    which completes the proof.
\qed

\begin{rem}
    Since $2$ is an entry in the matrix in \eqref{eq:matrixineq},
    it is not \emph{totally unimodular} (cf.~\cite[$\S$14]{Bar08}),
    so we are only guaranteed quasi-polynomiality
    with period $2$ in this case. However, the values of \ts $|A_\sigma^N|$ \ts
    are still polynomial in $N$ due to the specific forms of the inequalities
    involved. We expect this \emph{quasi-period collapse}, to happen in all cases
    as it does for the stretched Littlewood--Richardson coefficients,
    see~$\S$\ref{ss:finrem-conj}.  For more on the quasi-period collapse
    in this context, see \cite{HM08} and references therein.
\end{rem}

\smallskip

\subsection{Proof of Theorem~\ref{t:offset}}\label{ss:ex-offset}
We will actually show that Theorem~\ref{t:offset} holds for $n\ge 4$ if we take $u,v,w\in S_n$ such that
\[\code(u)=\code(v)=(1,0,\ldots, 0,1,0),\qquad \code(w)=(3,0,\ldots, 0,1,0).
\]
Note that this formulation in terms of codes agrees with that given in the statement of Theorem~\ref{t:offset} for $n\ge 7$.

We will need the following definition. Define the \defn{reverse dominance order} on tuples of the same length $(a_1,\ldots, a_k),(b_1,\ldots, b_k)\in \nn^k$ by
\[(a_1,\ldots, a_k)\le_\revdom (b_1,\ldots, b_k)\iff \sum_{i\ts=\ts \ell}^k a_i\le \sum_{i\ts=\ts \ell}^k b_i \ \text{ for all } \ 1\le \ell\le k.\]
Note that for any pipe dream $D$ for $u\in S_n$, since we can obtain $D$ from $D_\Bot(u)$ by a sequence of ladder moves, we have
\begin{equation}\label{eq:revdomweight}
    \wtx(D) \. \le_\revdom \. \wtx(D_\Bot(u)) \. = \. \code(u).
\end{equation}
Here, we pad the weights and code with $0$'s so that they are of length $n$.

Now we are ready to prove Theorem~\ref{t:offset}. We split the proof of into two parts.

\medskip
    \nin
{\bf Claim:}
    \begin{equation}\label{eq:offset2}
        c_{N*u,N*v}^{N*w} \. = \. 0 \quad \text{for all}\quad N > 2n-6.
    \end{equation}
\begin{proof}[Proof of Claim]
    Fix \ts $N>2n-6$, and let
    \[\bal \. := \. \code(N*w) \. = \. (3N, 0,\ldots, 0,N,0).
    \]
    Then, we know that
    \begin{equation}\label{eq:offsetschmult}
        c_{N*u,N*v}^{N*w}\, \le \, \big[x^\bal\big] \big(\Sch_{N*u}(x)\cdot \Sch_{N*v}(x)\big).
    \end{equation}
    By Theorem~\ref{t:schub} and Proposition~\ref{t:schubpd}, we have
    \[
    \Sch_{N*u}(x) \, = \, \sum_{D\ts\in\ts \cL(D_\Bot(N*u))} \. x^{\wtx(D)}.
    \]

    When performing ladder moves on $D_\Bot(N*u)$,
    we can move at most $(n-3)$ \. crosses up from the \ts $(n-1)$-\text{st} row
    to the first row. Thus, for any $a\in \nn^n$ with $a_1>N+n-3$, the coefficient of $x^a$ in $\Sch_{N*u}(x)$ is zero. We have a similar result for $\Sch_{N*v}(x)$. Putting these together shows that
    \[[x^\bal]\big(\Sch_{N*u}(x)\cdot \Sch_{N*v}(x)\big) \, = \, 0\qquad \text{since} \qquad \bal_1\. = \. 3N=2N+N \. > \. 2N+2n-6.
    \]
    Thus, \eqref{eq:offsetschmult} implies that $c_{N*u,N*v}^{N*w}=0$, completing the proof of \eqref{eq:offset2}.
\end{proof}

\medskip
    \nin
{\bf Claim:}
\begin{equation}\label{eq:offset1}
    c_{N*u,N*v}^{N*w}\. = \. 1 \quad \text{for} \quad N= 2n-6.
\end{equation}
    \begin{proof}[Proof of Claim]
        For $N=2n-6$, we observe that there is exactly one pipe dream $D$ for $N*u=N*v$ which has \ts $N+n-3=\frac{3N}{2}$ \. crosses in row $1$ and $N-(n-3)=\frac{N}{2}$ \. crosses in row $n-1$. Namely, we move exactly half of the crosses in row \ts $(n-1)$ \ts of \ts $D_\Bot(N*u)$ up into row~$1$.

        Moreover, it is easy to see that there are no pipe dreams for $N*u$ with crosses only in rows $1$ and $(n-1)$, that have more than $\frac{3N}{2}$ crosses in row $1$. Hence, we have
        \[
        K_{N*u, \ts \bal/2}=1\qquad \text{and} \qquad [x^\bal]\big(\Sch_{N*u}(x)\cdot \Sch_{N*v}(x)\big) \, = \, 1,
        \]
        where
        \[\bal \. := \. \code(N*w) \. = \. (3N,0,\ldots, 0, N, 0).\]

        Thus, by equating the coefficient of $x^\bal$ on both sides of
        \[\Sch_{N*u}(x)\cdot \Sch_{N*v}(x)=\sum_{w'\in S_\infty} c_{N*u,N*v}^{w'}\Sch_{w'}(x),\]
        we see that
        \[1=\sum_{w'\in S_\infty} c_{N*u,N*v}^{w'}K_{w',\bal},\]
        so there is exactly one $w'\in S_\infty$ satisfying
        \begin{equation}\label{eq:w'req}
            c_{N*u,N*v}^{w'}>0 \qquad \text{and} \qquad K_{w',\bal}>0.
        \end{equation}

        In particular, this $w'$ must have $c_{N*u,N*v}^{w'}=1$ and $K_{w',\bal}=1.$ We claim that $w'=N*w$, completing the proof of \eqref{eq:offset1}. Suppose that $w'\in S_\infty$ satisfies the conditions of \eqref{eq:w'req}.

        As in \eqref{eq:offsetschmult}, the fact that \ts $c_{N*u,N*v}^{w'}>0$ \ts
        implies that the coefficient of $\code(w')$ in $\Sch_{N*u}(x)\cdot \Sch_{N*v}(x)$ is nonzero. In particular, we can find pipe dreams $D_u\in \pd(N*u)$ and $D_v\in \pd(N*v)$ with $\wtx(D_u)+\wtx(D_v)=\code(w')$. By \eqref{eq:revdomweight}, we know that $\wtx(D_u)\le_\revdom \code(N*u)$, and similarly \ts $\wtx(D_v)\le_\revdom\code(N*v)$. Combining these relations gives
        \begin{equation}\label{eq:w'revdomtwo}
            \code(w')=\wtx(D_u)+\wtx(D_v)\le_\revdom \code(N*u)+\code(N*v)=(2N,0,\ldots, 0,2N,0).
        \end{equation}

        By Theorem~\ref{t:schub}, the fact that $K_{w',\bal}>0$ implies that there is a pipe dream $D$ for $w'$ with $\wtx(D)=\bal$. Then, we can see by Proposition~\ref{t:schubpd} and the definition of ladder moves that the only value for $\code(w')$ satisfying \eqref{eq:w'revdomtwo} and such that there is a pipe dream $D\in \pd(w')$ with $\wtx(D)=\bal$ is $\code(w')=\bal$ and $D=D_\Bot(w')$. Hence, we have $w'=N*w$.  This completes the proof of \eqref{eq:offset1} and finishes the proof of the theorem.
    \end{proof}

\medskip

\section{Final remarks and open problems} \label{s:finrem}

\subsection{}\label{ss:finrem-conj}
In \cite[Conj.~6.28]{Kir04}, Kirillov also speculated whether the generating function \ts
$P_{u,v,w}(t)$ \ts defined in \eqref{eq:gf}
can only have a pole at \ts $t=1$.  This is equivalent to \ts $f_{u,v,w}(N)$ \ts
being eventually polynomial in~$N$.  For comparison, the eventual quasi-polynomiality
in Theorem~\ref{t:main} implies the weaker statement that all poles of \ts $P_{u,v,w}(t)$ \ts are roots of unity.  We restate Kirillov's speculation as a conjecture:

\smallskip

\begin{conj}\label{conj:evpoly}
    Let \. $u,v,w\in S_n\ts$. Then \. $f_{u,v,w}(N)$ \. is eventually polynomial in~$N$.
\end{conj}

\smallskip

Recall the exact starting point \ts $M = 2n-5$ \ts given in
Theorem~\ref{t:offset}. Our exhaustive computer experiments for \ts $n\le 7$
\ts suggest that this bound might be sharp, i.e.\ \ts $f_{u,v,w}(N)$ \ts is
always polynomial for \ts $N \ge 2n-5$.  It would be interesting to bridge the
gap with the exponential upper bound in Theorem~\ref{t:quant}.

\subsection{}\label{ss:finrem-Kostka}
Let us mention that the argument in our proof of Theorem~\ref{t:main}
immediately gives eventual quasi-polynomiality of the Schubert--Kostka numbers:

\begin{prop}\label{p:Kostka}
For all \ts $u\in S_n$ \ts and \ts $a=(a_1,a_2,\ldots)$, s.t.\ $|a|=\ell(u)$,
the generating function
$$Q_{u,a}(t) \, := \, \sum_{N=1}^\infty \. K_{N*u,Na} \, t^N
$$
is rational.  Moreover, the function \ts $g_{u,a}(N) := K_{N*u,Na}$ \ts
is eventually quasi-polynomial.
\end{prop}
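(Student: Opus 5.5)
The plan is to mirror the proof of Lemma~\ref{l:pdtrips}, using a single pipe dream instead of a triple. By Theorem~\ref{t:schub}, $K_{N*u,Na}$ is the number of pipe dreams $D\in\pd(N*u)$ with $\wtx(D)=Na$. By Theorem~\ref{t:bij} this equals the number of $x\in\CS_{N*u}$ with $\wtx_{N*u}(x)=Na$. Set $\mu:=\max\{i : c_i(u)>0\}$. Remark~\ref{r:stretchedunion} shows that $\mu$ does not depend on $N$ and gives $\CS_{N*u}=\bigsqcup_{\vk\subseteq\cL_\mu}\CS_{\vk,N*u}$. Since this union is disjoint and finite (Remark~\ref{r:disjoint}), with the index set $\cL_\mu$ independent of $N$, it suffices to show that each
\[
g_\vk(N)\,:=\,\big|\{x\in \CS_{\vk,N*u} \. : \. \wtx_{N*u}(x)=Na\}\big|
\]
is eventually quasi-polynomial. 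A finite sum of eventually quasi-polynomial functions is again eventually quasi-polynomial: take the lcm of the periods and the max of the starting points.

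Before that, I would dispose of degenerate $a$. If $a_j\ne 0$ for some $j>\mu$, then both sides vanish for every $N$. Indeed, every pipe dream for $N*u$ has weight $\le_{\rm revlex}\code(N*u)$, as in the proof of Lemma~\ref{l:pdtrips}, so it has no crosses below row $\mu$. So I may assume $a$ is supported on $[\mu]$.

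For fixed $\vk$, I would read off the defining system from Remark~\ref{r:ineqform}. Using $c_j(N*u)=Nc_j(u)$, every inequality \eqref{eq:ineqa}--\eqref{eq:ineqcii} takes the form
\[
\sum_{\bi\in\vk} a_\bi x_\bi \,\le\, N\big(c_j(u)-d\,c_{j'}(u)\big)+b,
\]
with integer $a_\bi$, $d$ and $b$ independent of $N$. The condition \eqref{eq:ineqa}, that $x_\bi\neq0$ for $\bi\in\vk$, is not convex. I would replace it by $x_\bi\ge 1$, which is legitimate for nonnegative integers. The weight condition comes from \eqref{eq:uwtdef} and reads, for $1\le j\le\mu$,
\[
Nc_j(u)-\sum_{|\bi|=j}x_\bi+\sum_{i=j}x_\bi \,=\, Na_j .
\]
Each of these equations is a pair of inequalities whose right-hand side is $N$ times an integer vector, with no constant term. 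Together with $x_\bi\ge 0$ for all $\bi$, the system has the form $A\mathbf{x}\le N\mathbf{c}+\mathbf{b}$ with $A,\mathbf{c},\mathbf{b}$ independent of $N$. It defines a polytope, since the variables are nonnegative and bounded by \eqref{eq:ineqb}. Lemma~\ref{l:quasipoly} then makes $g_\vk$ eventually quasi-polynomial, and summing over $\vk$ gives the claim for $g_{u,a}$.

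Rationality of $Q_{u,a}(t)$ then follows from the standard fact that the generating function of an eventually quasi-polynomial sequence is rational: the finitely many initial terms form a polynomial, and each residue-class polynomial contributes a rational function with poles at roots of unity.

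The only real point needing care is that the non-strict encoding of \eqref{eq:ineqa} and the weight equations keep the right-hand sides in the exact form $N\mathbf{c}+\mathbf{b}$. The hypothesis $|a|=\ell(u)$ is not needed for this; it merely avoids trivial vanishing.
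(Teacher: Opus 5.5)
Your proposal is correct and follows the paper's own route. The paper says only that the argument of Theorem~\ref{t:main}, i.e.\ Lemma~\ref{l:pdtrips}, ``immediately'' gives the result, and you carry that out with a single pipe dream: the weight-preserving bijection of Theorem~\ref{t:bij}, the finite disjoint decomposition over $\vk\subseteq\cL_\mu$, and Lemma~\ref{l:quasipoly}. One small thing to state explicitly is that \eqref{eq:ineqa} also supplies the linear constraints $x_\bi=0$ for $\bi\notin\vk$, which are already of the form $A\mathbf{x}\le N\mathbf{c}+\mathbf{b}$.
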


Recall that polynomiality of the stretched Kostka numbers is an easier
problem than the corresponding problem for the LR coefficients.
See \cite{AK23} for an elementary proof via reduction to contingency tables,
and generalizations to flagged and cylindric Kostka numbers.
Thus asking if eventual polynomiality holds for the stretched Schubert--Kostka
coefficients is a natural first step towards Conjecture~\ref{conj:evpoly}.

In a different direction, let us mention that the Schubert--Kostka numbers do 
not satisfy the saturation property, cf.\ a discussion in~$\S$\ref{ss:intro-rel}.
This underscores the difference with the saturation for the (usual) Kostka numbers, 
which has several proofs and generalizations \cite{Ale15,AK23}. 

\begin{prop}\label{ex:Kostka}
    Let \. $u=2143\in S_4$ \. and \. $a=(2)$. Then:
    \[K_{u,a} \. = \. 1\qquad \text{and} \qquad K_{N*u,Na} \. = \. 0 \quad \text{for all} \quad N\ge 2.\]
\end{prop}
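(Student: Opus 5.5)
The plan is to characterize directly which permutations admit a pipe dream whose crosses all lie in row~$1$. By Theorem~\ref{t:schub}, $K_{w,(m)}$ is the number of pipe dreams for $w$ with all $m$ crosses in row~$1$. So it suffices to trace the pipes in such a pipe dream and read off a necessary condition on the one-line notation of~$w$. I will then check this condition on $u$ and on $N*u$.

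\emph{Step 1 (tracing pipes).} Let $D\subseteq\Delta$ have all its crosses in row~$1$, say at the columns in a set~$S$, and let $w$ be the permutation of $D$. Fix $i\ge 2$ and follow the pipe exiting the left edge at row~$i$, moving backwards towards the top. Every tile in rows $2,\ldots,i$ is an elbow. Hence this pipe runs up the staircase
\[(i,1)\to(i-1,1)\to(i-1,2)\to(i-2,2)\to\cdots\to(1,i-1),\]
entering $(1,i-1)$ from below. Then there are two cases:
\begin{itemize}
\item[$\circ$] If $(1,i-1)\in S$, the pipe goes straight up and exits at the top of column $i-1$, so $w(i)=i-1$.
\item[$\circ$] Otherwise it turns right into $(1,i)$ and travels right through row~$1$ until it reaches the first elbow, exiting at the top of that column. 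So $w(i)=\min\{c\ge i: c\notin S\}\ge i$.
\end{itemize}
In all cases we obtain the key inequality
\[w(i)\,\ge\, i-1 \qquad\text{for all } i\ge 2 .\]

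\emph{Step 2 ($N=1$).} We have $\code(u)=(1,0,1,0)$, and the set $S=\{1,3\}$ visibly gives a reduced pipe dream for $2143$. For uniqueness, the tracing in Step~1, extended to $w(1)$, determines $w$ from $S$. Alternatively, one can use $\Sch_{2143}=x_1^2+x_1x_2+x_1x_3$, in which $x_1^2$ has coefficient~$1$. Either way, $K_{u,(2)}=1$.

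\emph{Step 3 ($N\ge2$).} Next I compute $N*u$ from its code $(N,0,N,0)$. The entry $c_1=N$ forces $w(1)=N+1$. The entry $c_2=0$ forces $w(2)=1$. For $c_3=N$, the remaining values in increasing order are $2,\ldots,N,N+2,N+3,\ldots$, and $w(3)$ is the $(N+1)$-st of them, namely $N+3$. Finally $c_4=0$ gives $w(4)=2$. Thus
\[N*u=(N+1,\,1,\,N+3,\,2,\,\ldots).\]
For $N\ge 2$ we have $N+3\ne 3$, so $w(4)=2<3=4-1$. This violates the inequality of Step~1, so no pipe dream for $N*u$ has all its crosses in row~$1$. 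Therefore $K_{N*u,Na}=0$ for all $N\ge 2$.

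The only delicate point is the pipe tracing in Step~1. One must use the elbow convention correctly: an elbow joins top with left and bottom with right. One must also confirm that the case $(1,i-1)\in S$ yields exactly $w(i)=i-1$ rather than something smaller. Once that is checked, the rest is the routine decoding of the Lehmer code carried out above.
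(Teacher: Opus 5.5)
Your proof is correct, but it takes a different route from the one the paper points to. The paper omits its argument (``the easy proof using pipe dreams''). The argument it does write out for this situation is the first claim in the proof of Theorem~\ref{t:offset}: for $n=4$ the permutation there, with $\code(u)=(1,0,1,0)$, is exactly $u=2143$. That argument starts from $D_\Bot(N*u)$ and uses Proposition~\ref{t:schubpd}. At most $n-3=1$ cross can be carried by ladder moves from row~$3$ up to row~$1$, so every pipe dream for $N*u$ has at most $N+1$ crosses in row~$1$. This excludes weight $(2N)$ for $N\ge 2$, and for $N=1$ it leaves only the pipe dream with crosses at $(1,1),(1,3)$.

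You argue in the opposite direction. Instead of controlling which ladder moves are available from the bottom pipe dream, you take an arbitrary configuration supported in row~$1$, trace pipes, and extract the necessary condition $w(i)\ge i-1$. The permutation $N*u=(N+1,1,N+3,2,\ldots)$ violates it at $i=4$.

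What each route buys:
\begin{itemize}
\item Your route uses only Theorem~\ref{t:schub} and the tile convention, which you have right. It needs neither Bergeron--Billey connectivity nor any analysis of which ladder moves are possible. It also classifies all permutations admitting a one-row pipe dream, since the tracing shows $c\in S$ if and only if $w(c+1)=c$.
\item The ladder route gives the quantitative bound $a_1\le N+n-3$ for the whole family $\code(u)=(1,0,\ldots,0,1,0)$, which is what Theorem~\ref{t:offset} needs. It also never requires decoding $N*u$ into one-line notation.
\end{itemize}

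Two small slips:
\begin{itemize}
\item For uniqueness at $N=1$ you say the tracing determines $w$ from $S$. What is needed is the converse, that $S$ is determined by $w$. Your Step~1 does give this, through $c\in S\iff w(c+1)=c$, and your Schubert polynomial check settles it independently.
\item In Step~3 the remark ``$N+3\ne 3$'' is beside the point. The reason $w(4)=2$ is that $N\ge 2$ forces $w(1)=N+1\ne 2$, so the value $2$ is still unused when you reach position~$4$.
\end{itemize}
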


We omit the easy proof using pipe dreams.

\subsection{}\label{ss:finrem-unim}
It is worth comparing the eventual quasi-polynomiality in Theorem~\ref{t:main}
with polynomiality of the \emph{bounded inversions sum} \ts studied in
\cite[Thm~1.2]{PR-puzzle}.
In both cases, Schubert coefficients with a one parameter constraint are considered.
Both proofs reduce the problem to counting integer points in convex polytopes.
The polynomiality in the latter case is a property of \emph{order polytopes}
which are known to have integer vertices and unimodular triangulations,
cf.\ \cite{FH23,HPPS21}.

It is a major challenge to give a similar geometric
proof of the polynomiality of stretched LR coefficients discussed in the introduction,
cf.\ \cite{Ale19,DM06,Stu95}.  It is thus very unlikely that this approach can be used
to prove Conjecture~\ref{conj:evpoly}.  Let us also mention a more
conceptual approach to the bounded inversions sum by Stelzer~\cite{Ste25} 
(see also~\cite{Ste26}), which
 proved that eventually polynomiality holds in all classical types.

\subsection{}\label{ss:finrem-Kron}
There is a parallel study of \defn{Kronecker coefficients} \ts of the symmetric
group, generalizing LR coefficients in a different direction:
$$
g(\la,\mu,\nu) \, := \, \<\chi^\la\chi^\mu,\chi^\nu\> \, = \, \frac{1}{n!} \.
\sum_{\si \in S_n} \. \chi^\la(\si) \. \chi^\mu(\si)\. \chi^\nu(\si)\.,
$$
where \ts $\chi^\la$ \ts is an irreducible character of the symmetric
group~$S_n$, \ts $\la\vdash n$.  By definition, \ts $g(\la,\mu,\nu)\in \nn$,
but no known combinatorial interpretation is known \cite{Pak-OPAC,Sta00}.

Motivated by computational complexity applications, Mulmuley proved in
\cite{Mul09} that the \emph{stretched Kronecker coefficients} \ts
$g(N\la,N\mu,N\nu)$ \ts and the \emph{stretched plethysm coefficients}
 are quasi-polynomial.  Later work
\cite{CDW12,PP17} gave  Kronecker coefficients as an alternating
sum of the numbers of integer points in polytopes, and used
Barvinok's algorithm to compute Kronecker coefficients in a
style similar to Theorem~\ref{t:cs}.

In the same paper \cite{Mul09}, Mulmuley asked various positivity
and saturation type questions, some of which were later
disproved in \cite{BOR09,PP20}.  Mulmuley also speculated
about the saturation and quasi-polynomiality for stretched
Schubert coefficients \cite[$\S$3.7]{Mul09}, but this subject
was never investigated until \cite{PR26} and this paper.

Let us also mention \cite{SS16} which proved Stembridge's stability
conjecture on stability of \emph{shifted stretched Kronecker coefficients} \.
$g(\al+N\la,\be+N\mu,\ga+N\nu)$, also motivated by integer point in
polytopes considerations \cite{Ste14}.  It would be interesting to see
if there are analogues of such stability results for Schubert coefficients.

\subsection{}\label{ss:finrem-puzzle}
Note that permutations in Proposition~\ref{p:negex} and Theorem~\ref{t:offset}
have at most three and two descents, respectively, and so do their
stretched versions.  Thus, one could in principle try to prove both results with
a careful analysis of $2$-step and $3$-step puzzles \cite{BKPT16,KZ17}.
We avoided this approach in order to use these proofs both as an example
of our linear inequalities technology, and as an illustration of its power.
On the other hand, it would be interesting to see how the stretching affects
the $2$- and $3$-step puzzle tilings.  Can one confirm the eventual polynomiality
(Conjecture~\ref{conj:evpoly}) at least in these cases?

\medskip

\subsection*{Acknowledgements}
We thank Per Alexandersson, Jes\'us De~Loera, Alexey Garber, Christian Ikenmeyer,
Colleen Robichaux, Karthik Sellakumaran Latha, Ada Stelzer and Sylvester Zhang
for interesting discussions and helpful comments.
The first author was partially supported by the NSF grant CCF-2302173.
The second author was partially supported by the NSF grant DMS-2136090.

\end{document}